\documentclass{article}
\usepackage[english]{babel}
\usepackage{csquotes}
\usepackage[letterpaper,top=2cm,bottom=2cm,left=3cm,right=3cm,marginparwidth=1.75cm]{geometry}
\usepackage{authblk}
\usepackage{amsmath, mathtools,enumerate}
\usepackage{amsthm}
\usepackage{amsfonts}
\usepackage{amssymb}
\usepackage{bbm}
\usepackage{stmaryrd}
\usepackage{graphicx}
\usepackage{titlesec}
\usepackage{mathrsfs}
\usepackage[colorlinks=true, allcolors=blue, breaklinks]{hyperref}
\usepackage[hyperpageref]{backref}
\usepackage{pdfpages}
\usepackage{comment}
\usepackage{dsfont}
\usepackage[normalem]{ulem}
\usepackage{caption, subcaption}
\numberwithin{equation}{section} 

\newcommand\numberthis{\addtocounter{equation}{1}\tag{\theequation}}

\theoremstyle{plain}
\newtheorem{theorem}{Theorem}[section]
\newtheorem{lemma}[theorem]{Lemma}

\newtheorem{corollary}[theorem]{Corollary}
\newtheorem{proposition}[theorem]{Proposition}

\newtheorem{remark}[theorem]{Remark}

\renewcommand{\P}{\mathbb{P}}

\newcommand{\E}{\mathbb{E}}

\newcommand{\N}{\mathbbm{N}}
\newcommand{\R}{\mathbb{R}}
\newcommand{\Z}{\mathbb{Z}}
\newcommand{\eps}{\varepsilon}

\newcommand{\Zperp}{Z^K_\perp}

\title{Mutant invasion in stochastic individual-based models with cyclic resident dynamics}
\author[1]{Sylvain Billiard}
\author[2]{Chloë Mian*}
\author[2,3]{Charline Smadi}
\author[4,5]{Andr\'as T\'obi\'as}

\affil[1]{Univ. Lille, CNRS, UMR 8198 – Evo-Eco-Paleo, F-59000 Lille, France}
\affil[2]{Univ. Grenoble Alpes, CNRS, Institut Fourier (UMR 5582), 38000 Grenoble, France}
\affil[3]{Univ. Grenoble Alpes, INRAE, LESSEM, 38000 Grenoble, France}
\affil[4]{Department of Computer Science and Information Theory, Budapest University of Technology and Economics, Műegyetem rkp. 3., H-1111 Budapest, Hungary}
\affil[5]{HUN-REN Alfréd
Rényi Institute of Mathematics, Reáltanoda utca 13–15, H-1053 Budapest, Hungary}
\date{}

\begin{document}

\maketitle

\vspace{-3em}
\begin{center}
*Corresponding author. E-mail: chloe.mian@univ-grenoble-alpes.fr
\end{center}

\begin{abstract}
When a resident community oscillates, can a rare mutant invade, and with which probability? We study a multitype birth-and-death process with interactions, in which population sizes are of the order of a carrying capacity $K$, and whose large-population limit, as $K\to\infty$, has a hyperbolically stable limit cycle for the $D-1$ resident types. We introduce a single individual of a new type. We prove that, as long as the mutant population is smaller than $\varepsilon K$ for a small $\varepsilon>0$, the resident population sizes divided by $K$ stay close to a moving point of the cycle, whose asymptotic phase moves by at most $O(\varepsilon)$ per period. We deduce that, for large $K$, the mutant dies out quickly if its growth rate averaged over one period, $f(\bar{\mathbf n})$, is negative, and that it invades with positive probability, in a time close to $\ln K/f(\bar{\mathbf n})$, if $f(\bar{\mathbf n})$ is positive. This justifies a criterion used heuristically in ecology for decades. The invasion probability, unlike $f(\bar{\mathbf n})$, depends on the phase of the cycle at which the mutant arises. We give it explicitly through Kendall's formula, and show that its average over the cycle, weighted by the birth rate of the mutant, equals $f(\bar{\mathbf n})$. In predator--prey, competitive and mutualistic examples, the invasion probability varies by a factor between $2$ and $600$ along the cycle, and individual-based simulations agree quantitatively with the theory. After invasion, the oscillations may vanish, persist on a cycle of higher dimension, or give way to chaotic coexistence.
\end{abstract}

\emph{Keywords and phrases.} Mutant invasion; Stochastic individual-based models; Stable limit cycles; Branching processes in periodic environments; Asymptotic phase; Invasion probability.


\emph{MSC 2020.}  60J85, 92D15, 92D25, 34C25, 37N25.

\section{Introduction}

 \par Interactions among individuals are one of the primary mechanisms governing ecological dynamics across all levels of biological organization, from individual growth, reproduction, and mortality to the flow of matter and energy through ecosystems, which arises directly from the consumption of biomass. The general properties of these ecological dynamics have long been investigated (e.g.~through Lotka--Volterra systems), both to improve our understanding of biodiversity dynamics and to address applied questions, such as predicting the fate of endangered species or managing natural resources such as fish stocks.

\par The way interactions occur within and between species depends on individuals' traits, such as the ability of prey to avoid detection by predators through coloration \cite{hubbard2010}. Because these traits are represented as parameters in ecological models, they can profoundly influence population and community dynamics. Through the nonlinearities generated by species interactions, a wide range of dynamical outcomes may arise: species may i) exclude one another (e.g.~in competition \cite{tilman1982}), ii) coexist stably near an equilibrium (e.g.~in mutualistic interactions \cite{MBLS}), iii) exhibit periodic oscillations (e.g.~in predator--prey systems \cite{Turchin2003}), or iv) display chaotic fluctuations (e.g.~\cite{Smale1976,Vano,WX}). Treating traits as fixed parameters, that is, as intrinsic properties of populations or species, is nevertheless a strong simplification, because new variants continuously arise through immigration and mutation. This raises a central question: how are population and community dynamics affected by the evolution of traits? Conversely, how are traits involved in species interactions expected to evolve?

\par To address these questions, three main classes of models have been studied. First, a large body of work has examined the fate of a single mutant (or a rare variant) arising in a resident population assumed to be at equilibrium, thereby excluding periodic ecological dynamics (e.g.~in predator--prey systems \cite{loeuille2005}, mutualistic systems \cite{F02}, and competitive systems \cite{DieckmannDoebeli}). When analytical results are available, the study of these stochastic models often relies on comparisons with branching processes, in particular controlling that the resident population remains close to its equilibrium until the mutant either goes extinct or reaches a macroscopic size. This can be established using generalizations of classical results on the exit of stochastic processes from an attractive domain based on large-deviation theory \cite{FW84,dupuis1997weak,champagnat2007invasion,durrett2010}. Following successful mutant establishment, the subsequent population dynamics are typically approximated by deterministic dynamical systems. Although these models are grounded in stochastic processes, they do not address trait evolution when resident populations exhibit periodic oscillations.

\par A second class of models considers the evolution of species traits in the presence of periodic ecological dynamics using systems of ordinary differential equations (see \cite{Abrams2000} for a review). For example, \cite{abrams1999,jones2007} studied a system of three interacting species, two prey and one predator, described by a three-dimensional ordinary differential equation (ODE) model. They numerically computed the temporal average, over one cycle, of the growth rate of one prey species assumed to be rare in the subsystem formed by the other prey species and the predator. They then concluded that the subsystem could be invaded when this temporal average growth rate was positive. 
This criterion is exactly the sign of the quantity that we shall call the invasion fitness in~\eqref{def_inv_fitness} below. 

It was formalised as the invasion exponent, or invasion fitness, of a rare type in a fluctuating environment by Metz et al.~\cite{MetzNisbetGeritz1992} and Ferri\`ere and Gatto~\cite{FerriereGatto1995}. One outcome of the present paper is thus a justification, from an individual-based stochastic model, of a criterion which has been used in the ecological literature for decades: we prove that the sign of the temporal average of the growth rate does govern the alternative between extinction and invasion of a single mutant, and we complement it with the invasion probability, a quantity which the deterministic approach cannot reach.
A third, closely related class of models considers two coupled ODE subsystems: one describing changes in species densities and the other describing the evolution of species traits within the framework of quantitative genetics (e.g.~\cite{cortez2020}). These models implicitly assume that mutants arise at macroscopic initial densities across the entire range of possible trait values. As a result, they also assume that ecological and evolutionary processes occur on similar timescales. Hence, these models typically do not properly describe the invasion of a rare variant in a large population.

\par Stochastic models of invasion in a periodic environment do exist, but in all of them the periodicity is imposed from outside. The survival probability of a mutant, or of a pathogen, whose birth and death rates are given periodic functions of time has been computed in population genetics \cite{UeckerHermisson2011} and in epidemiology \cite{CarmonaGandon2020}, and Esser and Kraut \cite{EsserKraut2025} recently derived effective growth rates and a trait substitution sequence for an individual-based model whose parameters vary periodically in time. To our knowledge, no stochastic model has investigated the evolution of traits involved in species interactions when the oscillations are generated by these interactions themselves, even though such dynamics are common in many ecological contexts, including predator--prey interactions \cite{Turchin2003}, competitive interactions \cite{Gilpin1975,SinervoLively1996}, and mutualistic interactions \cite{MBLS}. In that situation the environment seen by the mutant is not a given function of time: it is produced by the resident populations, which are themselves subject to demographic noise. The goal of this paper is to fill this gap by providing a framework to determine the invasion probability of a mutant arising in a resident population that exhibits periodic oscillations generated by intrinsic ecological processes, that is, by interactions among individuals rather than by environmental fluctuations.
 
\par The model presented in the current paper is related to several previous works. In a seminal paper, Champagnat~\cite{champagnat2006microscopic} identified the survival probability and the asymptotic invasion time of a mutant arising in a resident population with competitive interactions in which coexistence is impossible. During a successful invasion, the population size, rescaled by its carrying capacity $K$, moves from the vicinity of the stable single-type equilibrium of the resident to that of the mutant in $\mathcal{O}(\ln K)$ time. This result was later extended to situations in which multiple types can coexist stably \cite{champagnat2011polymorphic}. Billiard and Smadi~\cite{BS19,billiard2017interplay} studied a related stochastic process in which periodic dynamics can arise among three competing types, allowing a mutant to invade during a single cycle that lasts $\mathcal{O}(\ln K)$ time. On a different timescale, when mutations occur more frequently than in~\cite{champagnat2006microscopic} (the so-called power-law mutation regime), cyclic evolutionary behaviour on the $\mathcal{O}(\ln K)$ timescale was observed in a model with horizontal gene transfer \cite{champagnat2021stochastic} and in models with asymmetric competition and general mutation graphs \cite{CoquilleKrautSmadi}.

\par In the present paper, we consider a birth-and-death process with interactions involving $D$ types of individuals, among which the first $D-1$ are the initially resident types and the $D$-th type is the mutant. Each individual independently produces an offspring of its own type at a rate that depends on the sizes of the $D$ subpopulations and dies at another rate that also depends on these population sizes. The periodic dynamics of the resident population arises from interactions among the $D-1$ resident types, whose large-population limit is described by a system of ODE exhibiting a hyperbolically stable periodic orbit (limit cycle). The period of these oscillations is therefore of order $\mathcal O(1)$ in the large-population limit. To study the conditions under which a single mutant of type $D$ can invade, using an approximation by a branching process, one must control the fact that the resident populations remain close to their deterministic large-population limit. 
This constitutes a natural extension of~\cite{champagnat2006microscopic}, and the difficulty is of a new nature. When the resident population sits at a stable equilibrium, knowing that it stays in a small neighbourhood of that equilibrium is enough to know the environment seen by the mutant. This is no longer the case along a limit cycle $\bar{\mathbf n}$: 
staying close to the cycle does not tell where the resident process is along it, and demographic fluctuations can make it drift along the cycle. We therefore do not compare the resident process with $\bar{\mathbf n}(t)$ itself. What we prove instead is that at any time it is close to $\bar{\mathbf n}(t+\theta)$ for a phase $\theta$ which moves by at most a quantity of order $\varepsilon$ at each period.
The deterministic ingredient which makes this possible is the asymptotic phase associated with a stable hyperbolic limit cycle. It turns out that the mean growth rate of the mutant over one period is insensitive to this phase drift, which is why the invasion time and the extinction-invasion alternative can be controlled over the whole invasion time scale, whereas the invasion probability, which does depend on the phase, requires a comparison over a bounded number of periods only.

\par After describing our model and presenting our main results, we apply our method to several ecological contexts, including predator--prey, competitive, and mutualistic interactions. We address the following questions: What is the invasion probability of a single mutant arising in a resident population exhibiting periodic oscillations? What is the fate of the population following the successful invasion of a mutant: coexistence or replacement? When coexistence is maintained, are periodic oscillations preserved or lost? These questions have long been present in the ecological literature, as it remains unclear whether trait evolution generally dampens or promotes periodic oscillations \cite{Abrams2000}. The first question is answered by Theorem~\ref{th_proba_inv} and Proposition~\ref{prop_proba_inv}; the last two are explored numerically on the three examples of Section~\ref{sec_examples}, where the macroscopic behaviour after invasion is that of the $D$-dimensional system~\eqref{dynsyst}.

\section{Model}\label{sec_model}

We consider a discrete birth and death process in continuous time and assume that the birth and death rates of all individuals depend on their type and on the population state, that is the numbers of all types of individuals. Individuals may be of $D$ possible types and their dynamics depend on a positive parameter $K$ which corresponds to the system carrying capacity and is a measure of the total amount of resource or space available in the environment. In the sequel we will be interested in the population behaviour for large $K$.

Let for $i \in \{1,...,D\}$ and $t \in \R_+$, $N_i(t)$ be the number of type $i$ individuals in the population at time $t$, and denote by $(e_i, 1 \leq i \leq D)$ the canonical basis of $\R^D$.
Our individual based model $\mathbf{N}^K=(N^K_{1}(t),\ldots,N^K_{D}(t),t\geq 0)$ is a Markov process with the following transition rates: 
$$
	\mathbf{N}^K\mapsto\begin{cases}
	\mathbf{N}^K +e_i &\text{ at rate }N^K_i b_i(\mathbf{N}^K/K)\\
	\mathbf{N}^K -e_i &\text{ at rate }N^K_i d_i(\mathbf{N}^K/K)\\
	\end{cases} 
$$
where the $(b_i, d_i, 1\le i\le D)$ are non-negative functions of class $\mathcal C^1$ on $\mathbb R_+^D$. We also assume that $d_D(\bar{\mathbf n}(t),0)$ is not identically zero along the resident orbit $\bar{\mathbf n}$ introduced below.

We introduce the corresponding process rescaled by the carrying capacity $K$ as 
$$\mathbf{Z}^K=(Z_1^K(t),\ldots,Z_D^K(t),t\geq 0)=(N^K_1(t)/K,\ldots,N^K_D(t)/K,t\geq 0).$$
The law of $\mathbf{Z}^K$ with initial state $\mathbf{z}$ is denoted by $\P_\mathbf{z}^K$.

We know (see Theorem 2.1 in \cite[p.~456]{ethier2009markov}) that, as $K\to\infty$, given convergence in probability of the initial condition $\mathbf{Z}^K(0)$, the process $\mathbf{Z}^K$ converges in probability, uniformly on finite time intervals to the solution $n(t)=(n_1(t),\ldots,n_D(t))$ of the dynamical system
\begin{equation}  \label{dynsyst}
 \dot n_{i} (t)=n_{i}(t)\big(b_i(n(t))-d_i(n(t))\big), \qquad i=1,\ldots,D. 
 \end{equation}
We may find various examples of rates satisfying such assumptions. The most studied examples in this context are the Lotka-Volterra type competitive interactions where the birth rates are constants ($b_i(N_1,...,N_D)=b_i$) and the death rates are of the form $d_i(N_1,...,N_D)=d_i+ \sum_{j=1}^D C_{i,j}N_j$, with $d_i, C_{i,j}$ non negative constants, and $C_{i,i}$ positive constants (see for instance  \cite{champagnat2006microscopic,champagnat2007invasion,bovier2019crossing,champagnat2021stochastic}). Certain predator--prey models also satisfy these assumptions and exhibit hyperbolically stable periodic solutions in some cases, such as the classical Rosenzweig--MacArthur model~\cite{RosenzweigMacArthur} that we will recall in Section~\ref{subsec:predator-prey}. Moreover, the class of two-dimensional models exhibiting mutualistic and parasitic parameter regimes studied recently in~\cite{MBLS} (see Section~\ref{subsec:mutualism}) also fits into our framework, just as the class of two-dimensional competitive models with horizontal gene transfer investigated in~\cite[Section 4]{BCFMT18}, see also~\cite{BCFMT16}.

We will assume that the dynamics restricted to the first $D-1$ types has a stable periodic orbit contained in the interior of the positive orthant of $\R^{D-1}$ with period $T$. We will call this orbit 
$$\bar{\mathbf n}=(\bar{\mathbf n}(t),t\in [0,T])=(\bar n_i(t),i \in \{1,...,D-1\},t \in [0,T]).$$
Then we assume that a type $D$ individual arrives as a mutant or a migrant. Notice that the period is arbitrary, as we can obtain any other positive and finite period via a time change.
We aim at describing the fate of this mutant, and how it impacts the global population behaviour.

\section{Results}
\label{sec:results}

To study the evolution of interactions among populations that coexist periodically in the event of a mutant's arrival, we need to control the
trajectory of the resident populations while the mutant population is still small. The difficulty is that we cannot simply prove that the rescaled process
$$\Zperp:=(Z_1^K,\ldots,Z_{D-1}^K)$$
stays close to the periodic orbit as a parametrized trajectory: over a time of order $\ln K$, the accumulated demographic fluctuations make it drift along the cycle. What we prove instead, in \eqref{small_bias}--\eqref{small_bias_phase}, is that it stays close to a moving point of the orbit, whose phase we are able to follow. We then characterize the probability of mutant invasion, that is, the probability that the mutant population reaches a size of the order of $K$ and can interact macroscopically with the resident populations (see Theorem \ref{th_proba_inv}). A first question is to know whether this probability is positive or null, a second one consists in looking for a quantitative expression of this probability when it is positive. Notice that the invasion probability depends on the resident population state when the mutant arrives. 

We prove in Theorem~\ref{th_proba_inv} that, up to an error which
vanishes as $\varepsilon\to0$, this probability is the survival probability
of a Galton--Watson branching process with mean offspring number
$\exp(Tf(\bar{\mathbf n}))$, whose offspring distribution depends on the
phase of the cycle at which the mutant arises, where $f(\bar{\mathbf n})$ is defined in \eqref{def_inv_fitness} below. The sign of $f(\bar{\mathbf n})$ indicates whether this probability is positive or zero; the probability itself is not determined by the mean offspring number alone, but we obtain an explicit expression for it in Proposition \ref{prop_proba_inv}, which we will compare with individual-based simulations in the examples considered in this paper.\\

For $\mathbf x=(x_1,\ldots,x_{D-1}) \in \R^{D-1}$ let us define $\mathbf r(\mathbf x) = (r_1(\mathbf x),\ldots,r_{D-1}(\mathbf x))$ via \begin{equation} \label{def_r} r_i(\mathbf x) = x_i(b_i(\mathbf x,0)-d_i(\mathbf x,0)) \end{equation}
that is, $\mathbf r(\mathbf x)$ is the vector of the first $D-1$ coordinates of the right-hand side of the system~\eqref{dynsyst} of ordinary differential equations (ODEs) at position $(x_1,\ldots,x_{D-1},0)$. In particular, if $t \mapsto (n_1(t),\ldots,$ $n_{D-1}(t),0)$ is a solution to the system, then $\mathbf{r}(n_1(t),\ldots,n_{D-1}(t))=(\dot n_1(t),\ldots,\dot n_{D-1}(t))$.

In the sequel, $\Vert . \Vert$ denotes the sup norm. Our general assumption will be that the periodic orbit under consideration is a stable and hyperbolic limit cycle, which means that the fixed point of the corresponding Poincar\'e map is stable and hyperbolic, that is, all eigenvalues $\mu$ of the $(D-2) \times (D-2)$ Jacobian matrix of the Poincar\'e map are located inside the unit circle $|\mu|<1$, see~\cite[Section 2.2.3]{kuznetsov} for details.

For $\alpha>0$ we introduce the $\alpha-$neighbourhood of $\bar{\mathbf n}$,
\begin{equation}\label{alpha-neighb}  \mathcal B_\alpha(\bar{\mathbf n})=\big\{ x \in \R^{D-1} \colon \exists t \in [0,T] \colon \Vert x - \bar{\mathbf n}(t) \Vert < \alpha \big\}. \end{equation}

The tool which allows us to follow the resident populations along the cycle is a second consequence of the stability and hyperbolicity of the limit cycle, and it will play a central role in the following. Not only is a trajectory starting close to $\bar{\mathbf n}$ attracted by the orbit, but it becomes indistinguishable from one particular solution running along it: there exists a map $\Theta$, defined on a neighbourhood of the orbit and with values in $\R/T\Z$, such that the solution started at $x$ is asymptotic to $t \mapsto \bar{\mathbf n}(t+\Theta(x))$. The quantity $\Theta(x)$ is called the asymptotic phase of $x$, and it satisfies $\Theta(\bar{\mathbf n}(\theta))=\theta$, so that it extends to a neighbourhood of the cycle the phase $\theta$ used in Section \ref{sec_examples}. We recall this classical statement, together with the properties of $\Theta$ that we shall need, in Lemma \ref{lem:phase}. In the sequel, for $x \in \mathcal B_\eps(\bar{\mathbf n})$, we denote by $\P_x$ the law of $\mathbf{Z}^K$ started at $\mathbf{Z}^K(0)=(x,1/K)$, that is when the $D-1$ resident populations start at $Kx$ and a single mutant is present.\\

For the sake of readability, we introduce the notations $\rho_D$ and $\sigma_D$ for the dynamics of the mutant for all $x \in \R_+^D$:
$$ \rho_D(x)=b_D(x)-d_D(x) \quad \text{and} \quad \sigma_D(x)= b_D(x)+d_D(x). $$

To state the next theorem, let us introduce the following notations, for an orbit $\bar{\mathbf n}$ of period $T$:
\begin{equation} \label{def_inv_fitness}
    f(\bar{\mathbf n}):= \frac{1}{T} \int_0^T \rho_D(\bar{\mathbf n}(s),0)ds.
\end{equation}
This quantity corresponds to the mean Malthusian parameter (that is individual growth rate) of a mutant if it arrives when the resident population sizes follow the periodic orbit $\bar{\mathbf n}$. Notice that, by periodicity, it does not depend on the phase of the orbit at which the mutant arises, unlike the invasion probability of a mutant, which does.

Besides, for $\theta \in \R/T\Z$, we introduce the continuous-time non homogeneous birth and death process $P_D^\theta:=(P_D^\theta(t), t \geq 0)$ with individual birth and death rates $b_D(\bar{\mathbf n}_\theta(t),0)$ and $d_D(\bar{\mathbf n}_\theta(t),0)$, where $\bar{\mathbf n}_\theta(t):=\bar{\mathbf n}((t+\theta) \mod T)$ denotes the resident orbit shifted by the phase $\theta$, as well as the discrete time Galton--Watson process $\mathscr{P}_D^\theta:=(\mathscr{P}_D^\theta(n)=P_D^\theta(nT), n \in \N)$. This is the process describing the mutant population as long as it is small, when it arises at the phase $\theta$ of the resident cycle. We denote by $p_D(\theta)$ their extinction probability
\begin{equation} \label{def_pD0}
 p_D(\theta):= \P_1(\exists k\in \N, \mathscr{P}_D^\theta(n)=0, \forall n \geq k)= \P_1(\exists t_0\geq 0, P_D^\theta(t)=0, \forall t \geq t_0).
\end{equation}

We finally introduce stopping times related to the process $N_D^K$:
\begin{equation} \label{defTK} \forall a \in \R_+, \quad T_a^K:= \inf \{t>0, N_D^K(t)= \lfloor aK \rfloor \}. \end{equation}
Then we have the following result.

\begin{theorem}\label{th_proba_inv}
Assume that \eqref{dynsyst} restricted to its $D-1$ first coordinates admits a stable and hyperbolic orbit $\bar{\mathbf n}$ of period $T$. Then there exist $\eps_0>0$ and finite positive constants $C_*$, $\tilde{C},$ $c_*$ and $C_{**}$ such that the following holds for every $\eps\leq\eps_0$.

$(i)$ For every finite $C$ and every $K$ large enough,
\begin{equation}\label{small_bias}
  \sup_{x\in\mathcal B_\eps(\bar{\mathbf n})}
  \P_x\Bigl(\exists\, t\le T_\eps^K\wedge C\ln K:\
     \Zperp(t)\notin\mathcal B_{C_*\eps}(\bar{\mathbf n})\Bigr)
   \leq \frac{8(D-1)}{T} C \ln K\;e^{-c_*K\eps^{2}} .
\end{equation}
More precisely, there exists an event of probability at least $1-(8(D-1)/T) C\ln K e^{-c_*K\eps^2}$ on which both the event of \eqref{small_bias} fails and, setting $\theta_n:=\Theta(\Zperp(nT))$, one has for every integer $n$ such that $(n+1)T\le T_\eps^K\wedge C\ln K$,
\begin{equation}\label{small_bias_phase}
  \sup_{t\in[nT,(n+1)T]}
  \bigl\Vert \Zperp(t)-\bar{\mathbf n}\bigl(t-nT+\theta_n\bigr)\bigr\Vert\le C_*\eps
  \qquad\text{and}\qquad
  |\theta_{n+1}-\theta_n|\le C_*\eps .
\end{equation}

$(ii)$ If $f(\bar{\mathbf n})<0$ then
\begin{equation}\label{extinction}
  \lim_{K\to\infty}\ \inf_{x\in\mathcal B_\eps(\bar{\mathbf n})}
   \P_x\bigl(T_0^K\le \ln\ln K\bigr)=1 .
\end{equation}

$(iii)$  If $f(\bar{\mathbf n})>0$ then
\begin{equation}\label{not_survive_ND}
  \limsup_{K\to\infty}\ \sup_{x\in\mathcal B_\eps(\bar{\mathbf n})}
  \Bigl|\P_x\bigl(T_0^K<T_\eps^K\bigr)-p_D\bigl(\Theta(x)\bigr)\Bigr|
  \ \le\ C_{**}\sqrt{\eps}
\end{equation}
and
\begin{equation}\label{survive_ND}
  \limsup_{K\to\infty}\ \sup_{x\in\mathcal B_\eps(\bar{\mathbf n})}
  \left|\P_x\Bigl(\frac{\ln K}{f(\bar{\mathbf n})+\tilde{C}\eps}
    <T_\eps^K<\frac{\ln K}{f(\bar{\mathbf n})-\tilde{C}\eps}\Bigr)
   -\bigl(1-p_D(\Theta(x))\bigr)\right|\ \le\ C_{**}\sqrt{\eps} .
\end{equation}
\end{theorem}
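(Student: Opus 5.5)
The plan follows Champagnat's three-step scheme \cite{champagnat2006microscopic}, with the stable equilibrium replaced by the asymptotic phase $\Theta$ of Lemma~\ref{lem:phase}.

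\textbf{Part (i): one period at a time.} Fix a period $[nT,(n+1)T]$ and condition on $\Zperp(nT)=x\in\mathcal B_{C_*\eps}(\bar{\mathbf n})$ and $Z^K_D\le\eps$ on this period. Write $\Zperp$ as its compensated martingale plus the integral of its drift. The drift is within $L\eps$ of $\mathbf r(\Zperp)$ because $Z^K_D\le\eps$ and the rates are $\mathcal C^1$. Apply an exponential martingale inequality to each of the $D-1$ coordinates, on each of a bounded number of sub-intervals of length $T/2$ say. This is where the factor $8(D-1)/T$ comes from, once we union-bound over the $C\ln K/T$ periods. It gives that, outside probability $e^{-c_*K\eps^2}$, the martingale part stays below $\eps$. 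Gronwall on $[0,T]$ then gives $\sup_{t\le T}\|\Zperp(nT+t)-\phi_t(x)\|\le C\eps$, where $\phi$ is the flow of $\mathbf r$.

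Next use the properties of $\Theta$ from Lemma~\ref{lem:phase}:
\begin{itemize}
\item $\Theta$ is Lipschitz near the cycle;
\item $\Theta(\phi_t(x))=\Theta(x)+t$;
\item $\|\phi_t(x)-\bar{\mathbf n}(t+\Theta(x))\|\le C\lambda^{t/T}\,\mathrm{dist}(x,\bar{\mathbf n})$ with $\lambda<1$, coming from hyperbolicity.
\end{itemize}
These give $|\theta_{n+1}-\theta_n|\le C\eps$ directly, and also the first half of \eqref{small_bias_phase}.

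\textbf{Main obstacle: closing the induction.} The distance to the cycle must not accumulate over $\ln K$ periods. I would control $d_n:=\mathrm{dist}(\Zperp(nT),\bar{\mathbf n})$ by the contraction: $d_{n+1}\le\lambda d_n+C\eps$, hence $d_n\le C\eps/(1-\lambda)$ for all $n$. Choosing $C_*$ as this bound plus the Gronwall constant closes the induction, and keeps all points inside the domain of $\Theta$ as long as $\eps_0$ is small. The phase itself may drift, but only by $C_*\eps$ per period, which is exactly what the statement allows.

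\textbf{Parts (ii) and (iii): coupling with branching processes.} On the good event, the mutant's rates at time $t\in[nT,(n+1)T]$ are within $C\eps$ of $b_D,d_D$ evaluated at $(\bar{\mathbf n}(t-nT+\theta_n),0)$. For $f(\bar{\mathbf n})$ and the growth time, the key point is that $\int_{nT}^{(n+1)T}\rho_D$ along the shifted orbit equals $Tf(\bar{\mathbf n})$ whatever $\theta_n$ is. So $N^K_D$ can be sandwiched between two time-inhomogeneous birth–death processes with per-period mean growth $e^{T(f\pm\tilde C\eps)}$. Their rates are defined using $\theta_n$ and $\pm C\eps$ perturbations, and the coupling is made period by period.

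For (ii), the upper process is subcritical, so it dies in $O(1)$ time with high probability; $\ln\ln K$ is then comfortably enough.

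For (iii), the phase dependence of the survival probability matters only for a bounded number $m$ of periods. After $m$ periods, either the mutant is extinct or it has reached $\ge M$ individuals. From $M$ individuals, each of the bounding processes survives with probability $\ge 1-q^M$, regardless of later phase drift, by supercriticality. I would choose $m,M$ of order $\eps^{-1/2}$ so that two errors balance:
\begin{itemize}
\item the cumulated phase drift $mC_*\eps$ changes the first-$m$-period law by $O(\sqrt\eps)$, by continuity of $p_D$ and of the finite-horizon law in $\theta$;
\item the residual errors $q^M$ and the early-extinction mismatch are $O(\sqrt\eps)$.
\end{itemize}
This balance is the source of the $C_{**}\sqrt\eps$. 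It also requires continuity in $\theta$ and in the rates of the extinction probability of $\mathscr P_D^\theta$, which is Lipschitz via Kendall's formula.

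\textbf{Timing in \eqref{survive_ND}.} On survival, the standard fact that a supercritical process satisfies $e^{-Tn(f\pm\tilde C\eps)}\mathscr P(n)\to W>0$ gives hitting times of $\eps K$ within $\ln K/(f\pm\tilde C\eps)$. The horizon $C\ln K$ in (i) is then taken with $C>1/(f-\tilde C\eps)$.
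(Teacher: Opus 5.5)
Your architecture matches the paper's: a one-period Gronwall plus exponential-martingale bound with zero initial error, induction on the distance to the cycle at times $nT$, Lipschitz and cocycle properties of $\Theta$, phase invariance \eqref{eq:phase-inv} for the growth phase, and a sandwich by periodic birth--death processes. However, the step you single out as the main obstacle does not close as written. Evaluating your own estimate $\|\phi_t(x)-\bar{\mathbf n}(t+\Theta(x))\|\le C\lambda^{t/T}d(x,\bar{\mathbf n})$ at $t=T$ gives $d_{n+1}\le C\lambda\,d_n+C'\eps$, not $d_{n+1}\le\lambda d_n+C'\eps$. The prefactor $C\ge1$ is a transient amplification: the linearised Poincar\'e map has spectral radius below $1$, but its norm can exceed $1$. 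Since the largest nontrivial multiplier may also be close to $1$, $C\lambda\ge1$ is possible, and then your recursion gives no bound over $\ln K/T$ periods. The paper handles exactly this by replacing $T$ by a multiple $mT$ with $C_\Phi e^{-\lambda mT}<1/2$ in its notation (your $C\lambda^m<1/2$), see \eqref{eq:contraction} and the remark after it. It then runs the deviation estimate over blocks of length $mT$.

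In (iii), you cannot prove the claim that the drift $mC_*\eps$ changes ``the first-$m$-period law'' by $O(\sqrt\eps)$. Continuity in $\theta$ of the law of $\mathscr P_D^\theta$ does not transfer to $N_D^K$, whose phase changes every period. The only available comparison is the monotone sandwich $P^-\le N_D^K\le P^+$ with fixed-phase processes (phase $\theta_0$, not your $\theta_n$-dependent ones) whose rates are shifted by $\gamma\asymp(1+m)\eps$. That sandwich controls extinction probabilities, not laws. For $m\asymp\eps^{-1/2}$ the means of $P^\pm(mT)$ differ by a factor $e^{4\gamma mT}=e^{O(1)}$, and on survival the paths differ by order $\gamma e^{mTf(\bar{\mathbf n})}$ jumps. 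What you need is a bound on $|q_n^+-q_n^-|$ uniform in the horizon $n$. The paper gets it by comparing generating functions over one period only, $\|g^+-g^-\|_\infty\le C_\gamma\gamma$, and iterating with the contraction $(g^-)'(q^-)=e^{-Tf^-}<1$ of Lemma~\ref{lem:kendall}(c). Your Lipschitz-via-Kendall remark can replace this iteration, but only through monotone quantities: use $q_m^+\le q^+\le q^-$, $q^+-q_m^+\le e^{-mTf^+}$ from \eqref{eq:LF-rates}, and $q^--q^+=O(\gamma)$ from \eqref{eq:kendall}. These also bound $\P(0<N_D^K(mT)<M)$. With $m,M\asymp\eps^{-1/2}$ your terms $q^M$ and $Me^{-mTf}$ are exponentially small, so there is no balance there. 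The paper instead takes $n_1\asymp\ln(1/\eps)$, so the drift costs only $O(\eps\ln(1/\eps))$. Its $\sqrt\eps$ comes from balancing $Ae^{-n_1Tf}$ against the $1/A$ from Doob's $L^2$ inequality for $N_D^K(n_1T+t)\exp(-\int_{n_1T}^{n_1T+t}\rho_D(\mathbf Z^K(s))ds)$. That $L^2$ estimate is also what should replace your almost sure limit $W>0$ for the timing. Your bounding processes live in a $K$-dependent environment, so an almost sure statement gives nothing uniform in $K$.
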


The critical case $f(\bar{\mathbf n})=0$ is not covered by Theorem~\ref{th_proba_inv}; by Proposition~\ref{prop_proba_inv} below, the limiting branching process then dies out almost surely.
In the limit when $T$ tends to $0$, the periodic orbit $(\bar{\mathbf n}(t),t\in [0,T])$ shrinks to an equilibrium $\bar{\mathbf n}(0)$, and in this case $f(\bar{\mathbf n})$ simply corresponds to $\rho_D(\bar{\mathbf n}(0))$.
In this limiting setup, the underlying branching process is a classical homogeneous binary Galton--Watson process, and in the supercritical case its survival probability is ${\rho_D(\bar{\mathbf n}(0))}/{b_D(\bar{\mathbf n}(0))}$ and its exponential growth rate conditional on survival equals $\rho_D(\bar{\mathbf n}(0))$ \cite[p.~202]{AN72}. The analogues of~\eqref{extinction}-\eqref{survive_ND} corresponding to this case were derived in~\cite{champagnat2006microscopic}.\\

From Theorem \ref{th_proba_inv} we see that the sizes of the populations $1$ to $D-1$ are little affected by the growth of the mutant population as long as the latter has a size smaller than $\eps K$. If this latter reaches size $\eps K$ it has a macroscopic impact on the system and the dynamics of the whole system is well approximated by the $D$-dimensional system \eqref{dynsyst}. We will explore the behaviour of this system, mostly using numerical simulations, for the different examples under consideration (see Section \ref{sec_examples}). \\

Theorem \ref{th_proba_inv} states that the type $D$ population size dynamics is very close to that of the branching process $P_D^\theta$ as long as it is smaller than $\eps K$ for a small $\eps$. We thus recover a similar behaviour as in the classical analysis of a mutant invasion when the resident population is at an attracting stable equilibrium (see \cite{champagnat2006microscopic} for instance), the phase being the only additional ingredient. 
The study of the process $N^K_D$ is however more involved, because the environment seen by the mutant is the resident process itself, and staying close to the cycle does not tell where this process is along it. This is the content of \eqref{small_bias_phase}: the rescaled resident population sizes are not only close to the cycle as a set, but at any time they are close to a moving point $\bar{\mathbf n}(t+\theta)$ of the cycle, whose phase $\theta$ moves by at most $C_*\eps$ at each period. Over a time of order $\ln K$, these per-period estimates only bound the accumulated phase drift by $O(\eps\ln K)$, so that we do not know at a large time $t$ whether $\Zperp(t)$ is close to $\bar{\mathbf n}(t \pmod T)$. What \eqref{small_bias_phase} provides is the weaker, but sufficient, information that the trajectory stays close to some point of the orbit, whose phase we are able to follow.\\

The reason why this phase drift is harmless for most of our statements is the following elementary observation: for any continuous $g$ and any $\theta$, we have 
$$\int_0^T g(\bar{\mathbf n}(s+\theta))ds= \int_0^T g(\bar{\mathbf n}(s))ds,$$ 
so that the mean Malthusian parameter of the mutant over one period does not depend on the phase at which this period begins:
\begin{equation}\label{eq:phase-inv}
  \frac1T\int_0^T\rho_D(\bar{\mathbf n}_\theta(s),0)\,ds=f(\bar{\mathbf n})
  \qquad\text{for every }\theta .
  \end{equation}
  The alternative between \eqref{extinction} and \eqref{not_survive_ND}, as well as the invasion time in \eqref{survive_ND}, are
therefore insensitive to the moment of the cycle at which the mutant arises. The probability of invasion, on the contrary, does depend on it through $p_D(\Theta(x))$, and the examples of Section \ref{sec_examples} show that this dependence is quantitatively strong: along the resident cycle it varies by a factor $2$ in the predator--prey model, by a factor $6$ in
the mutualistic one, and by a factor of about $600$ in the competitive one (from $0.0013$ to $0.78$). Notice that if the resident populations start exactly on the orbit, $x=\bar{\mathbf n}(\theta)$, then $\Theta(x)=\theta$ and $p_D(\Theta(x))$ is exactly the quantity computed in Section \ref{sec_examples}.\\

Unlike in the equilibrium case, the value and sign of the instantaneous Malthusian growth rate $\rho_D(\bar{\mathbf n}_\theta(t),0)$ change throughout a period $T$, so that the behaviour of the process $P_D^\theta$ depends on the phase $\theta$. Its extinction probability remains nevertheless explicit, because $P_D^\theta$ is a linear birth and death process in a $T$-periodic environment, to which Kendall's formula \cite{Kendall1948} applies.

\begin{proposition}\label{prop_proba_inv}
Recall the definitions of $f(\bar{\mathbf n})$ and $p_D(\theta)$ in \eqref{def_inv_fitness} and \eqref{def_pD0}, respectively. Then
\begin{enumerate}
    \item If $f(\bar{\mathbf n})\le 0$, then $p_D(\theta)=1$ for every $\theta$.
    \item If $f(\bar{\mathbf n})>0$, then $p_D(\theta)<1$ for every $\theta$ and
\begin{align}\label{eq:kendall-sec3}
  1-p_D(\theta)
 & =\frac{1-e^{-Tf(\bar{\mathbf n})}}
        {\displaystyle\int_0^{T}b_D\bigl(\bar{\mathbf n}_\theta(t),0\bigr)\,
          e^{-\int_0^{t}\rho_D(\bar{\mathbf n}_\theta(u),0)du}\,dt} \nonumber \\
 & =\frac{1-e^{-Tf(\bar{\mathbf n})}}
        {1-e^{-Tf(\bar{\mathbf n})}+\displaystyle\int_0^{T}d_D\bigl(\bar{\mathbf n}_\theta(t),0\bigr)\,
          e^{-\int_0^{t}\rho_D(\bar{\mathbf n}_\theta(u),0)du}\,dt}\, .
\end{align}
\end{enumerate}
\end{proposition}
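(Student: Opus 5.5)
We prove Proposition~\ref{prop_proba_inv} by applying Kendall's formula for linear birth-and-death processes with time-dependent rates $\lambda(t)=b_D(\bar{\mathbf n}_\theta(t),0)$ and $\mu(t)=d_D(\bar{\mathbf n}_\theta(t),0)$. Set $R(t)=\int_0^t(\mu(u)-\lambda(u))\,du=-\int_0^t\rho_D(\bar{\mathbf n}_\theta(u),0)\,du$.

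\textbf{Step 1 (law at time $t$).} Kendall~\cite{Kendall1948} gives, for $P_D^\theta(0)=1$,
\[
\P_1\bigl(P_D^\theta(t)=0\bigr)=1-\frac{1}{e^{R(t)}+\int_0^t\lambda(s)e^{R(s)}ds}.
\]
The same expression is obtained by solving the Riccati equation satisfied by the generating function. Conditionally on non-extinction, $P_D^\theta(t)$ is geometric.

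\textbf{Step 2 (reduction to the embedded Galton--Watson process).} By independence of individuals and $T$-periodicity of the rates, $\mathscr P_D^\theta$ is a Galton--Watson process. Its offspring law is that of $P_D^\theta(T)$ started from one individual: a zero-modified geometric law. Its mean is $e^{-R(T)}=e^{Tf(\bar{\mathbf n})}$, using~\eqref{eq:phase-inv}.

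The two extinction events in~\eqref{def_pD0} coincide. One inclusion is clear. For the other, if $\mathscr P_D^\theta$ is eventually $0$, then the continuous-time process is absorbed at $0$.

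\textbf{Step 3 (critical and subcritical case).} If $f(\bar{\mathbf n})\le0$, the mean offspring number is at most $1$. The offspring law is non-degenerate, since $\P(P_D^\theta(T)=0)>0$: this uses the assumption that $d_D(\bar{\mathbf n}(t),0)$ is not identically zero, which makes $\int_0^T\mu>0$ and hence the formula of Step 1 strictly positive. Classical Galton--Watson theory then gives $p_D(\theta)=1$.

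\textbf{Step 4 (supercritical case).} If $f(\bar{\mathbf n})>0$, the survival probability is the limit of $\P_1(P_D^\theta(nT)>0)$. By Step 1,
\[
\P_1\bigl(P_D^\theta(nT)>0\bigr)=\frac{1}{e^{R(nT)}+\int_0^{nT}\lambda(s)e^{R(s)}ds}.
\]
By periodicity, $R(t+kT)=R(t)-kTf(\bar{\mathbf n})$. Hence $e^{R(nT)}\to0$, and the integral is a geometric series:
\[
\int_0^{\infty}\lambda e^{R}=\frac{\int_0^T\lambda(s)e^{R(s)}ds}{1-e^{-Tf(\bar{\mathbf n})}}.
\]
This yields the first expression in~\eqref{eq:kendall-sec3}, and it is $<1$ unless $d_D\equiv0$ on the orbit. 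For the second expression, write $\lambda=\mu+\rho$ and note $\rho e^{R}=-\tfrac{d}{dt}e^{R}$. Then
\[
\int_0^T\lambda e^{R}=\int_0^T\mu e^{R}+1-e^{-Tf(\bar{\mathbf n})}.
\]
This gives the second expression in~\eqref{eq:kendall-sec3}, and also $1-p_D(\theta)>0$ directly.

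\textbf{Main obstacle.} The only delicate point is justifying Kendall's formula with merely continuous rates. I would rederive it from the backward Kolmogorov equation for the generating function $F(s,t)$. This equation is a Riccati equation, and the substitution $1/(1-F)$ linearizes it; this is routine.
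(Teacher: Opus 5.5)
Your proposal is correct and follows essentially the paper's route: Kendall's explicit formula for the time-inhomogeneous linear birth--death process, periodicity turning the integral over $[0,\infty)$ into a geometric series with ratio $e^{-Tf(\bar{\mathbf n})}$, and the identity $\int_0^T\rho_D(\bar{\mathbf n}_\theta(t),0)\,e^{-\int_0^t\rho_D(\bar{\mathbf n}_\theta(u),0)du}\,dt=1-e^{-Tf(\bar{\mathbf n})}$ to pass between the two expressions. The differences are minor. The paper works with the atom $W_\theta(t)/(1+W_\theta(t))$ (the $d_D$ form) and obtains the case $f(\bar{\mathbf n})\le 0$ from $W_\theta(\infty)=\infty$. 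You start from the $b_D$ form and use Galton--Watson theory, which makes explicit the role of the assumption $d_D\not\equiv 0$ that the paper uses only implicitly.
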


The two expressions in \eqref{eq:kendall-sec3} coincide because $\int_0^T \rho_D(\bar{\mathbf n}_\theta(t),0)e^{-\int_0^t \rho_D(\bar{\mathbf n}_\theta(u),0)du}dt = 1-e^{-Tf(\bar{\mathbf n})}$, and the second one is the more convenient for comparison purposes. When $b_D$ and $d_D$ are constant, both reduce to $(b_D-d_D)/b_D$, and we recover the survival probability of a homogeneous Galton--Watson process. In general, whether periodic oscillations make the invasion of a mutant easier or harder than at a fixed point can be read directly on \eqref{eq:kendall-sec3}, and depends on the way the birth rate $b_D$ along the orbit is correlated with the weight $e^{-\int_0^t \rho_D(\bar{\mathbf n}_\theta(u),0)du}$: no general conclusion holds, and the answer depends on the ecological context. Corollary~\ref{cor:average} below makes this statement quantitative. We shall compute both the phase-dependent probability \eqref{eq:kendall-sec3} and its average over the cycle in the examples of Section \ref{sec_examples}, and compare them with individual-based simulations. Let us finally emphasize that the sign of $f(\bar{\mathbf n})$ is precisely the criterion used numerically by \cite{abrams1999,Abrams2000,jones2007} to decide whether a rare species can invade a periodically oscillating community, and that Theorem~\ref{th_proba_inv} shows it to be the right one for the underlying stochastic model. Proposition~\ref{prop_proba_inv} adds to it the invasion probability, which has no deterministic counterpart: in a deterministic model, a rare type with positive average growth rate always invades.

\begin{corollary}\label{cor:average}
Assume $f(\bar{\mathbf n})>0$, and write $b(\theta):=b_D(\bar{\mathbf n}(\theta),0)$, $\rho(\theta):=\rho_D(\bar{\mathbf n}(\theta),0)$ and $s(\theta):=1-p_D(\theta)$ for the birth rate, the growth rate and the survival probability of a mutant arising at phase $\theta$. Let
\[
  \bar s:=\frac1T\int_0^Ts(\theta)\,d\theta,\qquad
  \bar b:=\frac1T\int_0^Tb(\theta)\,d\theta,\qquad
  \operatorname{Cov}(b,s):=\frac1T\int_0^Tb(\theta)s(\theta)\,d\theta-\bar b\,\bar s
\]
be the averages of $s$ and $b$ over one period and their covariance over one period. Then $s$ is the unique positive $T$-periodic solution of
\begin{equation}\label{eq:riccati}
  s'(\theta)=b(\theta)s(\theta)^2-\rho(\theta)s(\theta),
\end{equation}
and
\begin{equation}\label{eq:average}
  \frac1T\int_0^T b(\theta)\,s(\theta)\,d\theta=f(\bar{\mathbf n}).
\end{equation}
In particular, since $\bar b\ge f(\bar{\mathbf n})>0$,
\[
  \bar s=\frac{f(\bar{\mathbf n})-\operatorname{Cov}(b,s)}{\bar b},
\]
and if $b_D$ is constant along the cycle, this reduces to $\bar s=f(\bar{\mathbf n})/b_D$, as for a homogeneous birth and death process.
\end{corollary}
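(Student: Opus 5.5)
We will work directly with Kendall's formula from Proposition~\ref{prop_proba_inv}. Set $R(\theta,t):=\int_\theta^{\theta+t}\rho(u)\,du$, extending $b,\rho$ $T$-periodically. Then $\bar{\mathbf n}_\theta(t)=\bar{\mathbf n}(\theta+t)$, so the first expression in \eqref{eq:kendall-sec3} becomes
\[
  s(\theta)=\frac{1-e^{-Tf(\bar{\mathbf n})}}{I(\theta)},\qquad
  I(\theta):=\int_\theta^{\theta+T} b(u)\,e^{-\int_\theta^{u}\rho(v)dv}\,du .
\]
Since $f(\bar{\mathbf n})>0$, part 2 of Proposition~\ref{prop_proba_inv} gives $s>0$. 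Moreover $s$ is $T$-periodic and $\mathcal C^1$, because $b$ and $\rho$ are continuous along the orbit.

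\textbf{The Riccati equation.} Differentiating $I$ in $\theta$ gives
\[
  I'(\theta)=b(\theta+T)e^{-Tf(\bar{\mathbf n})}-b(\theta)+\rho(\theta)I(\theta).
\]
Periodicity of $b$ and $\int_\theta^{\theta+T}\rho=Tf(\bar{\mathbf n})$ (by \eqref{eq:phase-inv}) turn this into
\[
  I'(\theta)=-b(\theta)\bigl(1-e^{-Tf(\bar{\mathbf n})}\bigr)+\rho(\theta)I(\theta).
\]
Hence
\[
  s'=-\frac{(1-e^{-Tf})I'}{I^2}=b\,s^2-\rho\, s,
\]
which is \eqref{eq:riccati}.

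\textbf{Uniqueness.} Let $s$ be any positive $T$-periodic solution and put $w:=1/s$. Then $w'=\rho w-b$ is linear. Its solutions satisfy
\[
  w(\theta+T)=e^{Tf}w(\theta)-\int_\theta^{\theta+T}b(u)e^{\int_u^{\theta+T}\rho}\,du .
\]
Since $e^{Tf}\neq1$, the periodicity condition $w(\theta+T)=w(\theta)$ determines $w(\theta)$ uniquely, namely $w(\theta)=I(\theta)/(1-e^{-Tf})$. This is the expected main (though mild) point: the uniqueness comes from the non-resonance $Tf(\bar{\mathbf n})\ne0$ of the linearised equation, and the positivity assumption is what allows the passage to $w=1/s$.

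\textbf{The average identity.} Dividing \eqref{eq:riccati} by $s>0$ gives
\[
  (\ln s)'=b\,s-\rho.
\]
Integrating over one period and using periodicity of $\ln s$, the left side integrates to $0$. Therefore
\[
  \frac1T\int_0^T b\,s\,d\theta=\frac1T\int_0^T\rho\,d\theta=f(\bar{\mathbf n}),
\]
which is \eqref{eq:average}.

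\textbf{Consequences.} Since $s\le1$ (a probability) and $b\ge0$, we get $\bar b\ge\frac1T\int_0^T bs=f>0$. Rewriting \eqref{eq:average} as $\operatorname{Cov}(b,s)+\bar b\bar s=f$ yields the formula for $\bar s$. If $b\equiv b_D$ is constant, then $\operatorname{Cov}(b,s)=0$ and $\bar s=f/b_D$.
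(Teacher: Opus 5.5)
Your proof is correct and follows essentially the same route as the paper: you rewrite Kendall's formula as $s=(1-e^{-Tf(\bar{\mathbf n})})/I$ and differentiate using periodicity to get the Riccati equation (the paper differentiates $u=1/s$ directly). You then get uniqueness from the non-resonant linear equation for $1/s$, and \eqref{eq:average} by integrating $(\ln s)'=bs-\rho$ over one period. Your explicit justification of $\bar b\ge f(\bar{\mathbf n})$ from $0<s\le 1$ and $b\ge 0$ fills in a step the paper leaves implicit.
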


Theorem~\ref{th_proba_inv}, Proposition~\ref{prop_proba_inv} and  Corollary \ref{cor:average} will be proven in Section~\ref{sec_proof}.

\begin{remark}
Equation \eqref{eq:riccati} is the backward Kolmogorov equation for the survival probability of a lineage started at phase $\theta$, and
\eqref{eq:average} extends the homogeneous formula $s=\rho_D/b_D$: whatever the shape of the cycle, the birth-weighted average of the invasion
probability is the invasion fitness.
\end{remark}

\section{Examples}

\label{sec_examples}

We illustrate Theorem~\ref{th_proba_inv} with three examples for which the resident population converges to a stable hyperbolic cycle.
For each example, we first give the individual birth and death rates.
We then study the invading type when it is rare, using a linear birth-and-death branching process in a periodic environment.
Finally, we study the deterministic system after establishment.
The three examples give different outcomes: replacement and convergence to an equilibrium, coexistence on a new cycle, or coexistence in a numerical chaotic dynamics.

We use the notation of Sections~2 and~3: the resident cycle is denoted by $\bar n$ and its period by $T$.
The invasion fitness $f(\bar n)$ is defined in~\eqref{def_inv_fitness}.

For an introduction at phase $\theta\in[0,T)$, the survival probability
$s(\theta)=1-p_D(\theta)$ is given by \eqref{eq:kendall-sec3}, and its average over the cycle is denoted by $\bar s$, as in Corollary~\ref{cor:average}.
In the figures, phases are normalised by $T$, and we write
$q_K(\theta):=\mathbb P_{\bar{\mathbf n}(\theta)}(T^K_\varepsilon<T^K_0)$
for the corresponding finite-$K$ probability. In each example we specify
which point of the cycle is taken as phase $0$.

The finite-population simulations use the Gillespie algorithm~\cite{gillespie1977exact} to simulate the corresponding continuous-time birth--death process.
We define an invasion as the event that the invading population reaches $\lfloor \varepsilon K \rfloor$ before extinction.
For each of the three examples, we use 16 introduction phases and 1000 independent simulations per phase.
The error bars are 95\% Wilson confidence intervals.

\subsection{Example 1: Loss of periodic dynamics in a Rosenzweig--MacArthur predator--prey system}
\label{subsec:predator-prey}

Our first example is the Rosenzweig--MacArthur predator--prey model \cite{RosenzweigMacArthur}.
The resident prey and predator oscillate on a stable cycle.
We determine how the introduction phase affects the invasion of a mutant predator and then show that its establishment causes the loss of these periodic dynamics.

\subsubsection{Individual-based model and resident cycle}

We consider a prey type $x$, a resident predator $y$, and a mutant predator $z$.
Their per-capita birth and death rates are
\begin{equation}
	\begin{aligned}
		b_x(x,y,z) & =r,
		           & d_x(x,y,z)                              & =\frac{r}{N}x+\frac{c}{a+x}y
		+\frac{\widetilde c}{\widetilde a+x}z,                                              \\
		b_y(x,y,z) & =\frac{b x}{a+x},
		           & d_y(x,y,z)                              & =d,                          \\
		b_z(x,y,z) & =\frac{\widetilde b x}{\widetilde a+x},
		           & d_z(x,y,z)                              & =\widetilde d.
	\end{aligned}
	\label{eq:rma-rates}
\end{equation}
The two predators interact indirectly because they use the same prey.
Setting $z=0$ in the deterministic limit~\eqref{dynsyst} gives the Rosenzweig--MacArthur system for the residents:
\begin{equation}
	\left\{
	\begin{aligned}
		\dot x & =r x\left(1-\frac{x}{N}\right)-\frac{cxy}{a+x}, \\
		\dot y & =y\left(\frac{b x}{a+x}-d\right).
	\end{aligned}
	\right.
	\label{eq:rma-resident}
\end{equation}
For $b>d$, its positive equilibrium is
\[
	x^\star=\frac{ad}{b-d},
	\qquad
	y^\star=\frac{r}{c}\left(1-\frac{x^\star}{N}\right)(a+x^\star).
\]
It is unstable when $N>a+2x^\star$, in which case the positive quadrant contains an attracting periodic orbit~\cite{RosenzweigMacArthur}.
With the following parameter values,
\begin{equation}
(r,N,a,b,c,d)=\left(1.5,3.2,\frac{4}{3},1,2,0.4\right),
\label{eq:rma-resident-parameters}
\end{equation}
the corresponding equilibrium is $(x^\star,y^\star)=(0.8889,1.2037)$, and numerical integration gives a stable cycle with period $T\simeq12.5774$.
Numerically, the Floquet multipliers are $1$ and approximately $0.810$. The multiplier $1$ corresponds to motion along the cycle. 
The other multiplier measures perturbations away from the cycle, since $|0.810|<1$ these perturbations decrease after each period. 
Then, the resident cycle is hyperbolic and locally attracting, as required in Theorem~\ref{th_proba_inv}.

\subsubsection{Invasion of a mutant predator}

The parameter values for the mutant predator were chosen as follows:
\begin{equation}
	(\widetilde a,\widetilde b,\widetilde c,\widetilde d)
	=\left(\frac{20}{3},5,10,0.45\right).
	\label{eq:rma-mutant-parameters}
\end{equation}
When $z$ is rare, its effect on prey mortality and on the macroscopic dynamics of the resident pair $(x,y)$ is negligible.
Its population can then be approximated by a linear birth-and-death branching process in a periodic environment, with rates
\begin{equation}
	b_D\bigl(\bar n(t),0\bigr)
	=\frac{\widetilde b\,\bar x(t)}{\widetilde a+\bar x(t)},
	\qquad
	d_D\bigl(\bar n(t),0\bigr)=\widetilde d.
	\label{eq:rma-rare-rates}
\end{equation}
We then obtain
\begin{equation}
	f(\bar n)=\frac{1}{T}\int_0^T
	\left(\frac{\widetilde b\,\bar x(t)}{\widetilde a+\bar x(t)}
	-\widetilde d\right)\,\mathrm dt
	\simeq0.15286>0.
	\label{eq:rma-fitness}
\end{equation}
The survival probability varies between $0.173$ and $0.337$, and $\bar s\simeq0.249$.
It is maximal while the prey density is high and increasing, and the resident predator is close to its minimum.
Conversely, it is minimal shortly after the resident predator reaches its maximum, when the prey density is decreasing.
Figure~\ref{fig:rma-phase} compares this limit with $q_K(\theta)=\mathbb P_\theta(T_\varepsilon^K<T_0^K)$, estimated by Gillespie simulations for $K=10^4$ and $\varepsilon=0.01$.

\begin{figure}[t]
	\centering
	\includegraphics[width=0.96\textwidth]
	{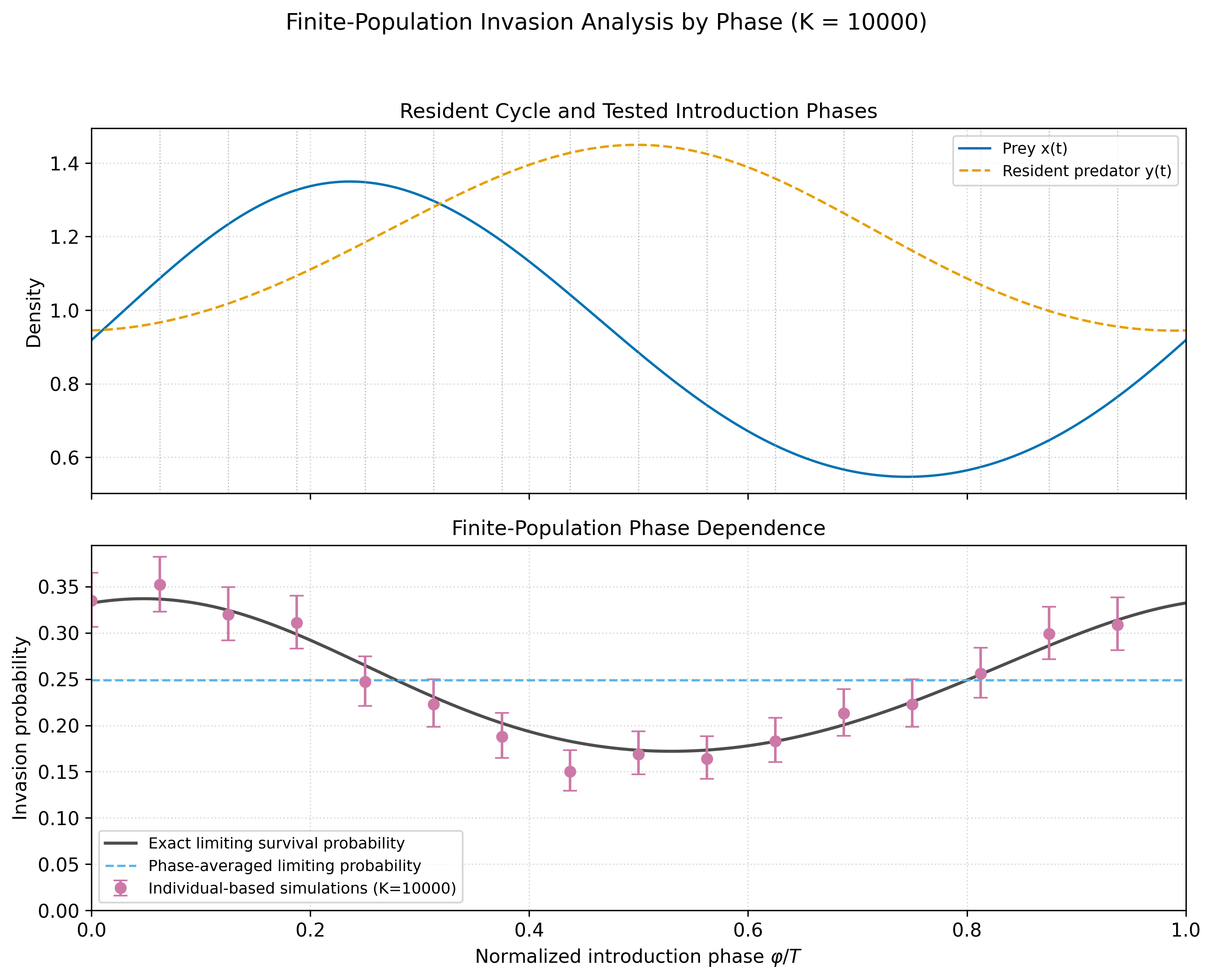}
	\caption{Invasion of a mutant in the Rosenzweig--MacArthur predator--prey system.
		The upper panel shows one period of the resident cycle and the introduction phases used in the simulations.
		The lower panel compares $s(\theta)$ with the individual-based estimate $q_K(\theta)$ for $K=10^4$.
	}
	\label{fig:rma-phase}
\end{figure}

\newpage
\subsubsection{Post-establishment dynamics}

Once the mutant density is macroscopic, the deterministic limit becomes
\begin{equation}
	\left\{
	\begin{aligned}
		\dot x & =r x\left(1-\frac{x}{N}\right)
		-\frac{cxy}{a+x}-\frac{\widetilde c xz}{\widetilde a+x}, \\
		\dot y & =y\left(\frac{b x}{a+x}-d\right),               \\
		\dot z & =z\left(\frac{\widetilde b x}{\widetilde a+x}
		-\widetilde d\right).
	\end{aligned}
	\right.
	\label{eq:rma-full-system}
\end{equation}
After establishment, numerical integration shows that $z$ replaces $y$.
The system converges to $(x,y,z)\simeq(0.6593,0,0.8725)$.
At this equilibrium, the per-capita growth rate of a rare resident predator is approximately $-0.0691$.
Thus, $y$ cannot reinvade locally. Hence, after invasion of the mutant predator and the extinction of the resident predator, the periodic dynamics are lost (Fig. \ref{fig:rma-post-invasion}).

\begin{figure}[t]
	\centering
	\includegraphics[width=0.96\textwidth]
	{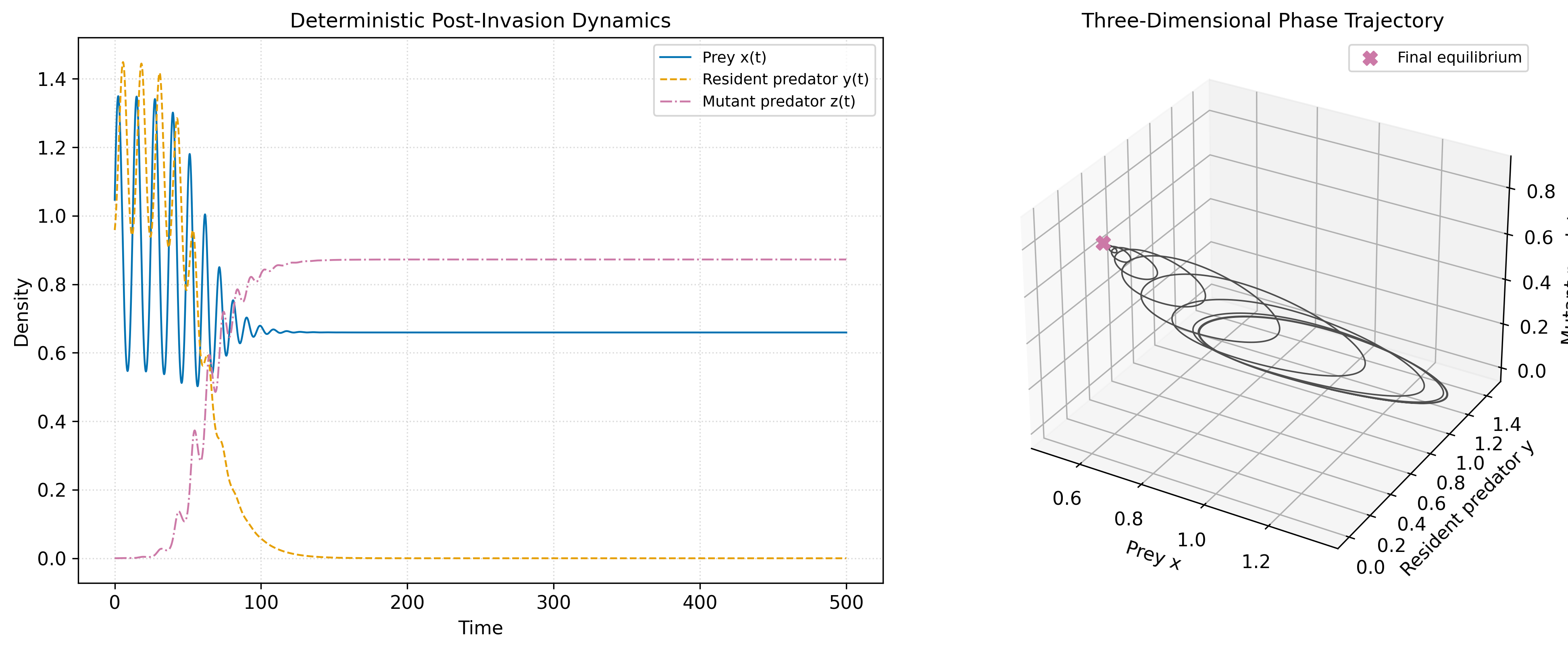}
	\caption{Predator--prey dynamics after establishment.
		The mutant replaces the resident predator, and the trajectory converges from the resident cycle to a stable prey--mutant equilibrium.
	}
	\label{fig:rma-post-invasion}
\end{figure}

\newpage
\subsection{Example 2: Emergence of chaotic dynamics in a competitive system}
\label{subsec:competition}

Our second example concerns competition, for which every interspecific effect decreases the per-capita growth rate of the affected species.
We start from three resident competitors on a stable cycle and introduce a fourth competing type.
The model is constructed from the four species system of Wang and Xiao \cite{WX}.
For some establishment phases, the post-invasion dynamics exhibit persistent coexistence on a numerically chaotic attractor.

\subsubsection{A three-species resident cycle}

Let $x=(x_1,x_2,x_3)^\mathsf T$ denote the resident densities, $B^{\rm cyc}_{ij}$ the effect of competition from a $j$ individual on the death rate of an $i$ individual,  and set $r=(0.42,0.57,0.72)^\mathsf T$.
At the individual level, a resident individual of type $i$ has the non-negative per-capita birth and death rates
\begin{equation}
	b_i(x)=r_i,
	\qquad
	d_i(x)=r_i\sum_{j=1}^3 B^{\rm cyc}_{ij}x_j,\qquad 1\leq i\leq3.
	\label{eq:competition-resident-rates}
\end{equation}

Consequently, in the absence of the invading type, the resident deterministic limit is the competitive Lotka--Volterra system

\begin{equation}
	\dot x_i=r_i x_i\left(1-\sum_{j=1}^3
	B^{\rm cyc}_{ij}x_j\right),\qquad 1\leq i\leq3.
	\label{eq:competition-resident}
\end{equation}
The following matrix of competition coefficients allows for an attracting limit cycle:
\begin{equation}
	B^{\rm cyc}=
	\begin{pmatrix}
		0.486518 & 0.026041 & 1.172464 \\
		0.545422 & 0.785093 & 0.105706 \\
		0.438029 & 1.126036 & 0.620741
	\end{pmatrix}.
	\label{eq:Bcycle}
\end{equation}

After discarding a transient phase in the simulations, we record successive upward crossings of the section $x_1=x_1^\star$, where $x^\star$ is the interior equilibrium. The crossing points converge to a fixed point of the associated Poincar\'e return map, and the time between two successive returns gives the period $T\simeq76.5396$. 
The eigenvalues of the linearized return map, equivalently the two non-trivial Floquet multipliers, are approximately $0.87097$ and smaller than $10^{-15}$ (a Floquet multiplier cannot vanish, since the monodromy matrix is invertible)~\cite[Chapter~2]{kuznetsov}. 
Since both have modulus strictly smaller than one, perturbations transverse to the orbit contract after one period. 
This numerically verifies that the resident cycle is hyperbolic and locally attracting, as required in Theorem~\ref{th_proba_inv}.

\subsubsection{Invasion of a new competitive species}

\label{subsection_invasion}

We now introduce a fourth competitor with density $m$. 
We call it a mutant throughout the example, although its interaction coefficients differ substantially from those of the three resident types.
It may equivalently be interpreted as a new competitor introduced by immigration. 
The theory does not require the mutational effect to be small: $m$ plays the role of type $D$ in Sections~\ref{sec_model} and~\ref{sec:results} and initially corresponds to one individual in a resident population of order $K$.

Write $u=(m,x_1,x_2,x_3)^\mathsf T$, set $r_m=1$, and recall that $r=(0.42,0.57,0.72)^\mathsf T$ for the residents.
To obtain a post-invasion regime different from the resident cycle, we use the following four-species interaction matrix, written in the coordinate order $(m,x_1,x_2,x_3)$:

\begin{equation}
	C=\begin{pmatrix}
		0.1     & 0.0588 & 2.156 & 0.49   \\
		0.03528 & 1      & 0.049 & 1.2936 \\
		0.13818 & 1.0486 & 1     & 0.1666 \\
		0.02548 & 0.5292 & 1.127 & 1
	\end{pmatrix}.
	\label{eq:competition-C}
\end{equation}
It is adapted from the four-species competitive system of Wang and Xiao at parameter value $s=0.98$~\cite{WX}, after the density rescaling of its first species $m=10u_1$.
Let $B^{\rm ch}$ be the lower-right $3\times3$ block of $C$ and define
\begin{equation}
	h(m)=\frac{m^8}{\delta^8+m^8},\qquad
	B(m)=B^{\rm cyc}+h(m)\bigl(B^{\rm ch}-B^{\rm cyc}\bigr),\quad
	\delta=0.01.
	\label{eq:competition-interpolation}
\end{equation}
The Hill function \color{blue} $h$ \color{black} gives a smooth transition between the two matrices.
When $m=0$, the resident matrix is $B^{\rm cyc}$. As $m$ increases, it approaches $B^{\rm ch}$. Such a dependence of the interaction between two species on the density of a third one is known as a higher-order interaction~\cite{BaireyKelsicKishony2016}. Here it lets the invading type modify the competition among the residents.
The individual-based model is specified by the following non-negative per-capita rates:
\begin{equation}
	\begin{aligned}
		b_m(u) & =r_m,
		       & d_m(u) & =r_m\left(C_{mm}m+\sum_{j=1}^3 C_{mj}x_j\right), \\
		b_i(u) & =r_i,
		       & d_i(u) & =r_i\left(C_{im}m+
		\sum_{j=1}^3B_{ij}(m)x_j\right),\quad 1\leq i\leq3.
	\end{aligned}
	\label{eq:competition-rates}
\end{equation}
The deterministic system is $\dot u_i=u_i(b_i(u)-d_i(u))$.
The entries of $B^{\rm ch}-B^{\rm cyc}$ are non-negative and $h'(m)\geq0$.
All off-diagonal effects on the per-capita growth rates are then non-positive, and the model is competitive.
It is not a standard Lotka--Volterra model with a fixed matrix, since the competition coefficients between residents depend on $m$.

The assumptions of the paper are satisfied: the rates in~\eqref{eq:competition-rates} are non-negative and of class $C^1$. Moreover, $h(0)=0$ and $h(m)=O(m^8)$ near zero, so a single invading individual has a vanishing effect on the residents as $K$ tends to infinity. 
Finally at $m=0$, the resident system is exactly~\eqref{eq:competition-resident}, whose stable hyperbolic cycle was verified above.
Along this cycle, the fourth species, when in a small quantity, is approximated by a linear birth-and-death branching process in a periodic environment, with
\begin{equation}
	b_D\bigl(\bar n(t),0\bigr)=1,
	\qquad
	d_D\bigl(\bar n(t),0\bigr)
	=0.0588\bar x_1(t)+2.156\bar x_2(t)+0.49\bar x_3(t).
	\label{eq:competition-rare-rates}
\end{equation}
Numerically,
\begin{equation}
	f(\bar n)\simeq0.43548>0.
	\label{eq:competition-fitness}
\end{equation}
The survival probability varies between $0.00130$ and $0.77597$, depending on the phase of introduction, and $\bar s\simeq0.43548$.
The equality $\bar s=f(\bar{\mathbf n})$ is not a coincidence: since $b_D\equiv1$, it follows from Corollary~\ref{cor:average}.
Figure~\ref{fig:competition-phase} compares Kendall's formula with $q_K(\theta)$ for the full individual-based model.
We use $K=5\times10^4$ and $\varepsilon=0.002$, so that the invasion threshold is $\lfloor\varepsilon K\rfloor=100$.

The simulations give the same general phase effect as the limiting result.
At $13$ of the $16$ phases, the limiting value belongs to the $95\%$ confidence interval.
However, the agreement is not equally good at every phase.
At phases $0.625$ and $0.6875$, the estimates for finite $K$ are $0.649$ and $0.196$, while the limiting values are $0.452$ and $0.052$.
To identify the source of this gap, we introduce $q_{100}^{\rm BD}(\theta)$, where BD stands for birth--death. It is the probability that the periodic linear birth--death process, started from one individual at phase $\theta$, reaches $100$ before extinction. 
Then, $q_{100}^{\rm BD}$ uses the same threshold as $q_K$, but not the finite $K$ fluctuations of the resident population. Comparing $q_{100}^{\rm BD}$ with $q_K$ tests the finite $K$ approximation, whereas comparing it with $s$ measures the effect of replacing the threshold $100$ by survival forever. More precisely, if $u_n(\theta)$ is the probability of reaching $100$ before $0$ from $n$ individuals, then $u_n$ is the $T$-periodic solution of
\begin{equation}
	\partial_\theta u_n(\theta)
	+n b_D(\bar n(\theta),0)\bigl(u_{n+1}(\theta)-u_n(\theta)\bigr)
	+n d_D(\bar n(\theta),0)\bigl(u_{n-1}(\theta)-u_n(\theta)\bigr)=0,
	\label{eq:competition-backward}
\end{equation}
for $1\leq n\leq99$, with $u_0=0$, $u_{100}=1$, and $q_{100}^{\rm BD}(\theta)=u_1(\theta)$. At normalized phases $0.625$ and $0.6875$, we obtain $q_{100}^{\rm BD}=0.642$ and $0.207$. These values are close to $q_K=0.649$ and $0.196$, but not to $s=0.452$ and $0.052$. Hence, the gap mainly comes from the finite threshold: reaching $100$ individuals is less demanding than surviving forever. It is not primarily caused by $K=5\times10^4$ being too small.

\begin{figure}[h]
	\centering
	\includegraphics[width=0.96\textwidth]
	{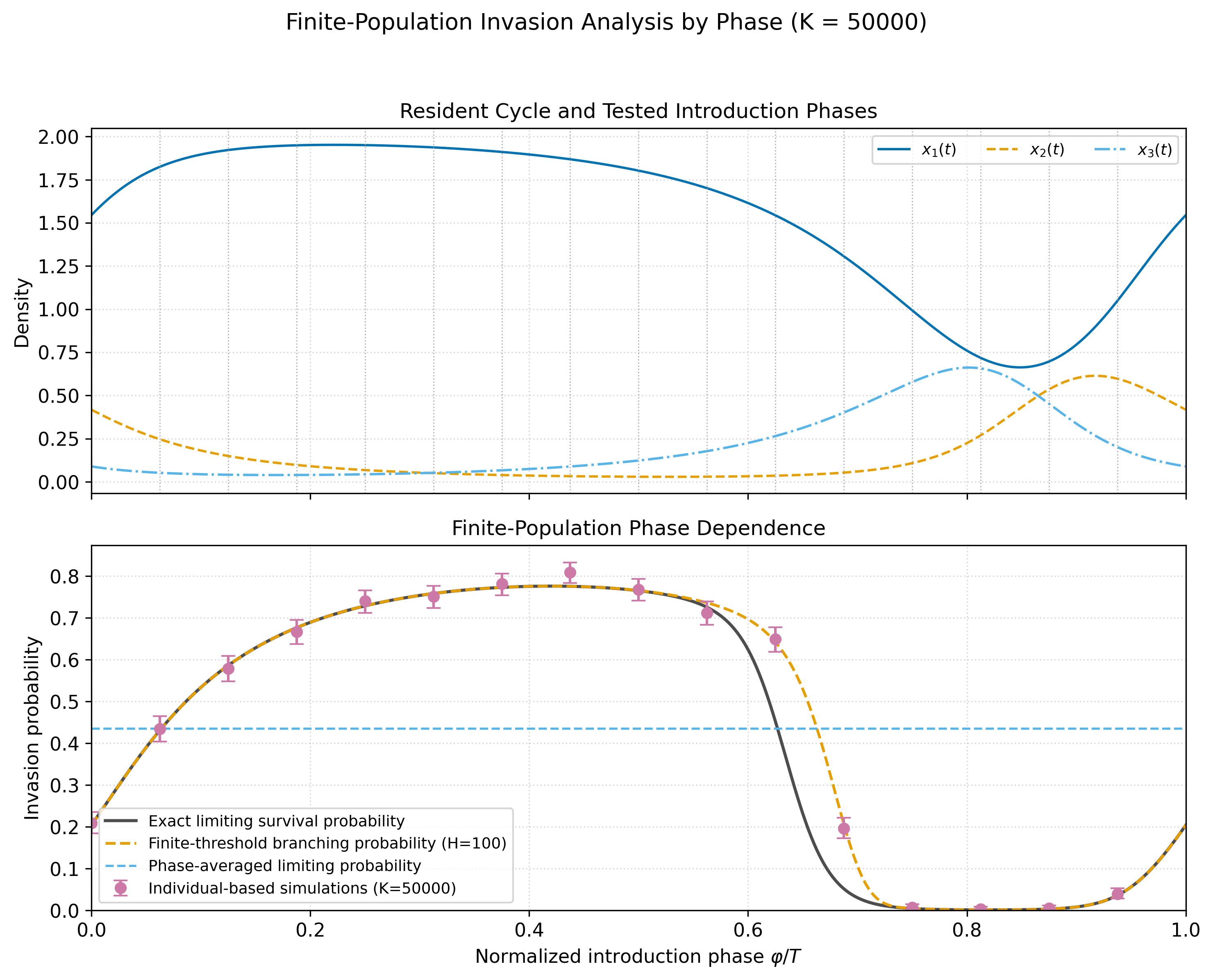}
	\caption{Invasion in a competitive Lotka-Volterra dynamics.
		The upper panel shows the resident cycle and the phases used in the simulations.
        The lower panel compares the survival probability $s(\theta)$, the probability $q_{100}^{\rm BD}(\theta)$ of reaching $100$ in the birth--death approximation, and the individual-based estimate $q_K(\theta)$ for $K=5\times10^4$.
	}
	\label{fig:competition-phase}
\end{figure}

\subsubsection{Post-establishment dynamics and phase-dependent basins}

We use $\varepsilon=0.002$ for this example.
At that density, $h(\varepsilon)\simeq2.56\times10^{-6}$, so the resident interaction matrix is still very close to $B^{\rm cyc}$ at the end of the branching stage.
To display the post-establishment attractor, we select a deterministic trajectory starting at phase $14/24$ with $m(0)=10^{-5}$. 
This small positive density is of the same order as $1/K=2\times10^{-5}$ for the value $K=5\times10^4$ used above, but it is only a deterministic indicator for rarity, not exactly one individual. 
The phase is chosen because the resulting establishment state lies in the basin of the interior attractor.
This trajectory is illustrative and is not used to estimate an invasion probability.
It reaches $\varepsilon$ after $T_\varepsilon\simeq7.7$.
The corresponding establishment phase is approximately $0.684$.

For this trajectory, the four species remain bounded away from zero. 
To quantify sensitivity to initial conditions, we follow the evolution of infinitesimal perturbations along the trajectory. The Lyapunov exponents measure their average exponential rates of growth or decline over long times. Numerically, we integrate the system linearized along the trajectory and regularly orthonormalize the perturbation vectors to keep the computation stable. The averages are evaluated only after discarding the initial transient~\cite{Vano, WX}.
The resulting spectrum is

\begin{equation}
	(\lambda_1,\lambda_2,\lambda_3,\lambda_4)
	\simeq(4.47\times10^{-3},\quad 2.32\times10^{-5},\quad
	-0.1627,\quad -0.5999).
	\label{eq:competition-lyapunov}
\end{equation}
The positive largest exponent indicates exponential sensitivity to nearby initial conditions. 
The exponent close to zero corresponds to perturbations tangent to the trajectory.
Together with the irregular persistent trajectory shown in Figure~\ref{fig:competition-chaos}, these properties support the presence of an interior chaotic attractor.
We obtain the same conclusion with smaller integration steps, longer transients, nearby initial conditions, and small changes of $B^{\rm cyc}$.
The smallest density observed on the reference attractor is $0.0235$.
These results are numerical and do not give a mathematical proof of chaos.
For this reason, we call this attractor numerically chaotic.

\begin{figure}[h]
	\centering
	\includegraphics[width=0.96\textwidth]
	{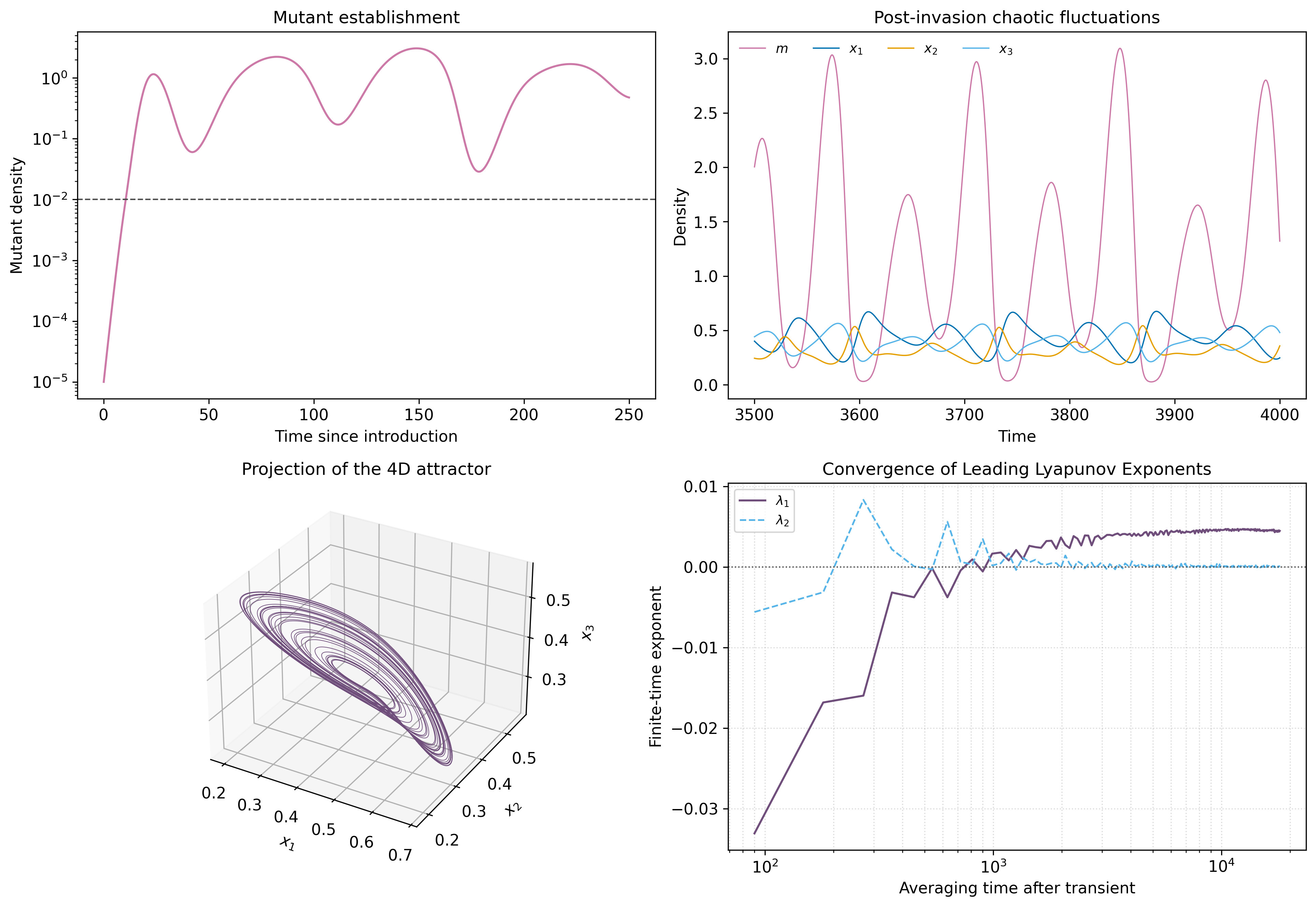}
	\caption{Competitive dynamics after establishment, for an initial condition in the basin of the interior chaotic attractor.
		The time series and the phase-space projection show irregular dynamics.
		The Lyapunov spectrum has one positive exponent, one exponent close to zero, and two negative exponents.
	}
	\label{fig:competition-chaos}
\end{figure}

The result after establishment depends on the establishment phase.
To study this effect independently of the first stochastic stage, we start the same four-dimensional deterministic system from
\begin{equation}
	u_\vartheta(0)=\bigl(\varepsilon,\bar n(\vartheta)\bigr),
	\qquad \vartheta\in[0,T).
	\label{eq:competition-basin-ics}
\end{equation}
Here, $\vartheta$ is the phase at which the fourth species has already reached $\varepsilon$.
It is not necessarily the phase at which the first individual appeared.
We use $48$ phases equally spaced in time over one resident period:
\begin{equation}
	\vartheta_k=\frac{kT}{48},\qquad k=0,\ldots,47.
	\label{eq:competition-basin-grid}
\end{equation}
The phases are equally spaced, but the corresponding points are not necessarily equally spaced in the phase portrait, because the speed along the resident cycle is not constant.
We integrate each trajectory up to time $8000$.
Four consecutive phases, $\vartheta/T\in\{0.6667,0.6875,0.7083,0.7292\}$, lead to interior numerically chaotic coexistence.
The other $44$ trajectories approach the boundary, where one or more species are absent.
Figure~\ref{fig:competition-basins} summarizes these two outcomes along the resident cycle.

The equations are the same for all $48$ simulations, but the initial condition changes with the phase.
These initial conditions can belong to different basins of attraction.
The ratio $4/48$ only describes the result on this grid.
It is not the probability of reaching the chaotic attractor.

\begin{figure}[h]
	\centering
	\includegraphics[width=0.96\textwidth]
	{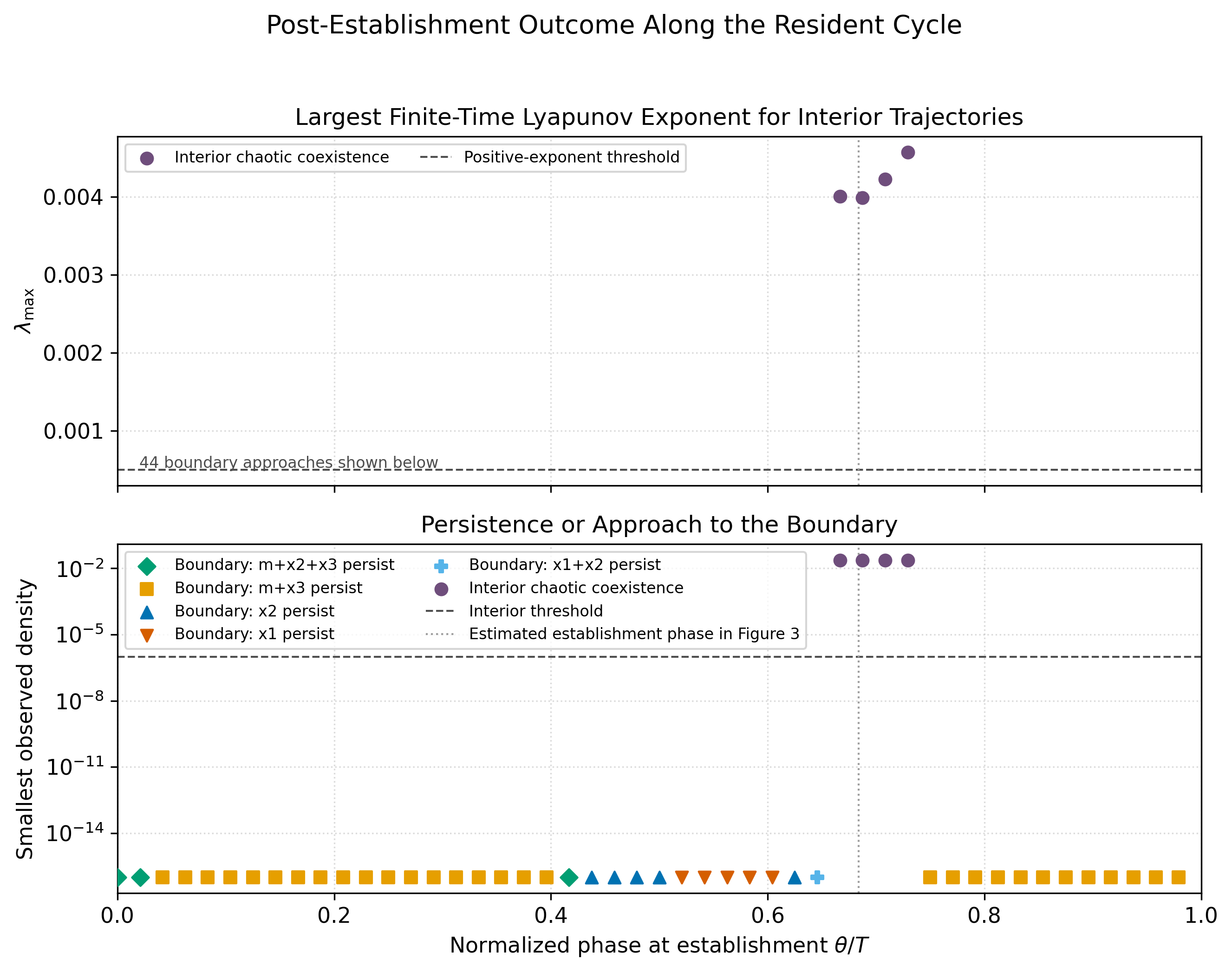}
	\caption{Long-term outcome as a function of the establishment phase $\vartheta$.
		The upper panel gives the largest Lyapunov exponent for trajectories that remain in the interior.
		The lower panel gives the smallest density and indicates which species remain on the boundary.
		The equations are the same at every phase, only the initial condition~\eqref{eq:competition-basin-ics} changes.
	}
	\label{fig:competition-basins}
\end{figure}

\subsection{Example 3: Maintenance of periodic dynamics from 2D to 3D  mutualistic system}
\label{subsec:mutualism}

Our third example is a density-dependent interaction model from the mutualistic framework of Mian et al.~\cite{MBLS}. 
Two resident types oscillate on a stable cycle that alternates between mutualistic and antagonistic regimes. 
After establishment of a mutant derived from the second resident type, all three types coexist on a new stable cycle, so periodic dynamics are maintained in a higher-dimensional system.

\subsubsection{Individual-based model and resident cycle}

We consider two resident types, $x$ and $y$, and a mutant $z$ derived from $y$.
All birth and death rates are non-negative:
\begin{equation}
	\begin{aligned}
		b_x(x,y,z) & =a_1,
		           & d_x(x,y,z) & =b_1(y+\alpha z-c_1)^2+d_1x, \\
		b_y(x,y,z) & =d_2x,
		           & d_y(x,y,z) & =a_2+b_2(y-c_2)^2+\kappa z,  \\
		b_z(x,y,z) & =d_3x,
		           & d_z(x,y,z) & =a_3+b_3(z-c_3)^2+\kappa' y.
	\end{aligned}
	\label{eq:mutual-full-rates}
\end{equation}
When $z=0$, the deterministic resident dynamics are
\begin{equation}
	\left\{
	\begin{aligned}
		\dot x & =x\left[a_1-b_1(y-c_1)^2-d_1x\right],  \\
		\dot y & =y\left[-a_2-b_2(y-c_2)^2+d_2x\right].
	\end{aligned}
	\right.
	\label{eq:mutual-resident}
\end{equation}

This system belongs to the class of density-dependent interactions studied in~\cite{MBLS}. 
The effect of $x$ on the per-capita growth rate of $y$ is the positive constant $d_2$.
The effect of $y$ on the per-capita growth rate of $x$ is $-2b_1(y-c_1)$: it is positive when $y<c_1$ and negative when $y>c_1$. 
Thus, the interaction is mutualistic below the threshold $c_1$, whereas above it the high density of $y$ becomes detrimental to $x$. The resident cycle crosses this threshold and therefore alternates between the two interaction regimes.

We use
\begin{equation}
	\begin{gathered}
		(a_1,b_1,c_1,d_1)=(13.69,0.25,4,0.36),\\
		(a_2,b_2,c_2,d_2)=(9,0.04,16,1).
	\end{gathered}
	\label{eq:mutual-resident-parameters}
\end{equation}
The positive equilibrium is unstable and surrounded by a stable periodic orbit of period $T\simeq0.46095$.
Numerically, the Floquet multipliers are $1$ and approximately $0.625$. The multiplier $1$ corresponds to motion along the cycle. The other multiplier measures perturbations away from the cycle, since $|0.625|<1$ these perturbations decrease after each period. 
Then, the resident cycle is hyperbolic and locally attracting, as required in Theorem~\ref{th_proba_inv}.

\subsubsection{Invasion of a mutant derived from \texorpdfstring{$y$}{y}}

The mutant parameters are
\begin{equation}
	(a_3,b_3,c_3,d_3,\kappa',\alpha,\kappa)
	=(9,0.04,8,1,0.05,8,0.5).
	\label{eq:mutual-mutant-parameters}
\end{equation}
When $z$ is rare, the terms containing its density can be neglected.
Its population is then approximated by a linear birth-and-death branching process in a periodic environment, with
\begin{equation}
	b_D\bigl(\bar n(t),0\bigr)=d_3\bar x(t),
	\qquad
	d_D\bigl(\bar n(t),0\bigr)=a_3+b_3c_3^2+\kappa'\bar y(t).
	\label{eq:mutual-rare-rates}
\end{equation}
We obtain
\begin{equation}
	f(\bar n)=\frac{1}{T}\int_0^T
	\left[d_3\bar x(t)-a_3-b_3c_3^2-\kappa'\bar y(t)\right]\,\mathrm dt
	\simeq0.44228>0.
	\label{eq:mutual-fitness}
\end{equation}
The survival probability varies between $0.011$ and $0.064$, and $\bar s\simeq0.0356$.
It is higher when $x$ is large, because mutant births increase, and when $y$ is small, because mutant deaths decrease.
Figure~\ref{fig:mutual-phase} compares the limiting probability with Gillespie estimates for $K=3\times10^4$ and $\varepsilon=0.01$.

\begin{figure}[h]
	\centering
	\includegraphics[width=0.96\textwidth]
	{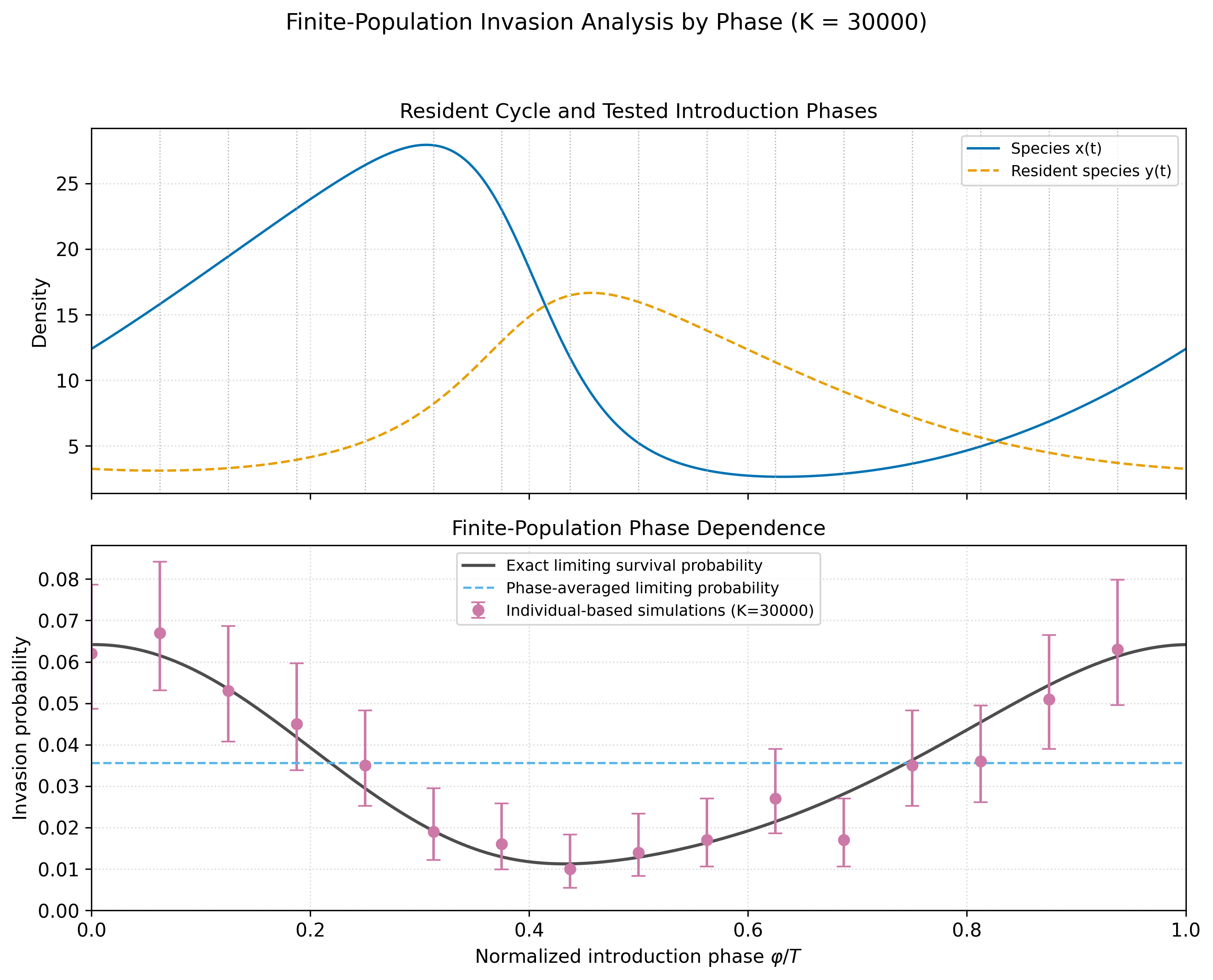}
	\caption{Mutualistic example.
		The upper panel shows the resident cycle and the phases used in the simulations.
		The lower panel compares $s(\theta)$ with the individual-based estimate $q_K(\theta)$ for $K=3\times10^4$.
	}
	\label{fig:mutual-phase}
\end{figure}

\newpage
\subsubsection{Post-establishment dynamics}

When the mutant density is no longer small, the deterministic limit of the same birth--death model~\eqref{eq:mutual-full-rates} is
\begin{equation}
	\left\{
	\begin{aligned}
		\dot x & =x\left[a_1-b_1(y+\alpha z-c_1)^2-d_1x\right],   \\
		\dot y & =y\left[-a_2-b_2(y-c_2)^2+d_2x-\kappa z\right],  \\
		\dot z & =z\left[-a_3-b_3(z-c_3)^2+d_3x-\kappa' y\right].
	\end{aligned}
	\right.
	\label{eq:mutual-full-system}
\end{equation}
Starting from a small positive mutant density, the numerical solution converges to a stable three-dimensional cycle.
The resident type $y$ is not eliminated, and the oscillations remain.
The three types $x$, $y$, and $z$ coexist on this cycle (Fig. \ref{fig:mutual-post-invasion}).

\begin{figure}[h]
	\centering
	\includegraphics[width=0.96\textwidth]
	{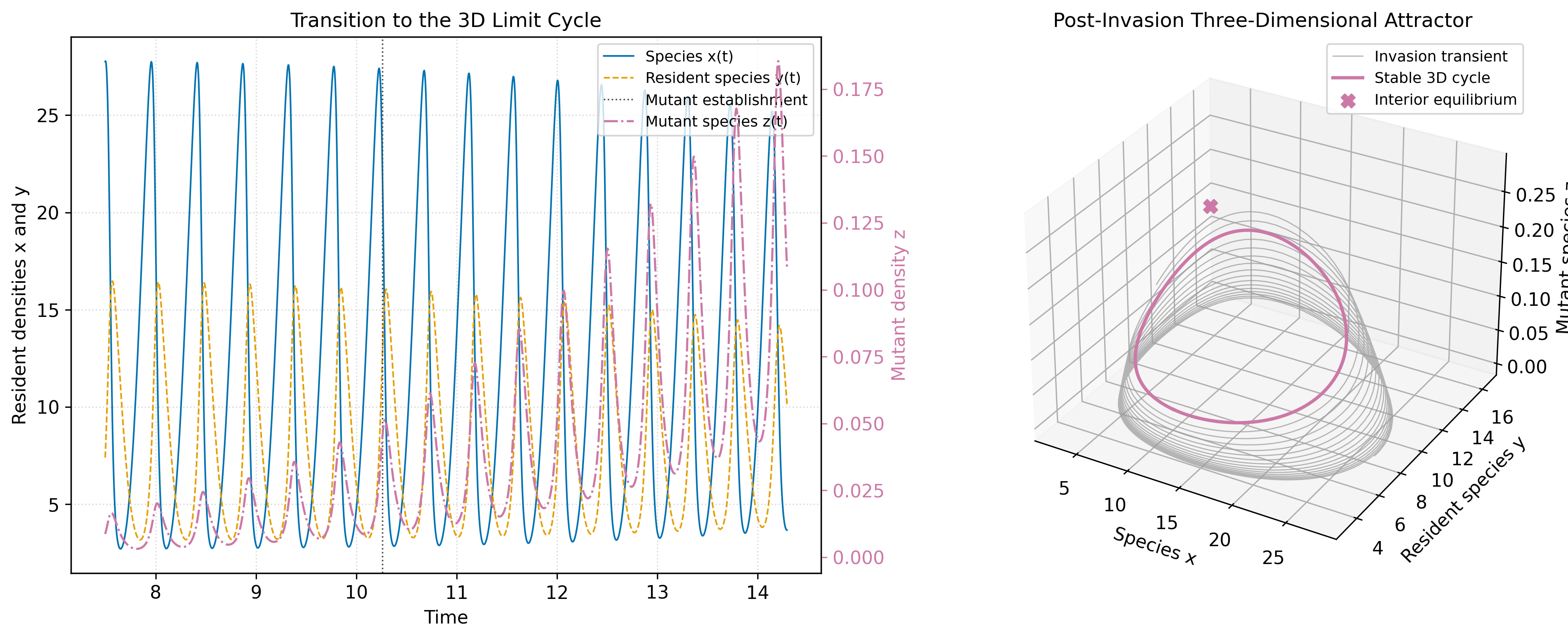}
	\caption{Mutualistic dynamics after establishment.
		The trajectory leaves the resident two-dimensional cycle and converges to a stable three-dimensional cycle.
		The three types coexist on this new cycle.
	}
	\label{fig:mutual-post-invasion}
\end{figure}

\newpage
\section{Proof of Theorem \ref{th_proba_inv}}\label{sec_proof}

Let us first recall the following notations of Section \ref{sec:results}: $\Zperp = (Z_1^K,\ldots,Z_{D-1}^K)$, $\rho_D=b_D-d_D$, $\sigma_D=b_D+d_D$, and, for $\theta\in\R$,
$\bar{\mathbf n}_\theta(t):=\bar{\mathbf n}\bigl((t+\theta)\bmod T\bigr)$ for the resident orbit
shifted by the phase $\theta$. Besides the stopping times $T_a^K$ of \eqref{defTK} we introduce the exit times
\begin{equation}\label{def_TKex}
   T_{\alpha}^{K,ex}:= \inf_{s \geq 0} \{ \Zperp(s) \notin \mathcal B_{\alpha}(\bar{\mathbf n}) \},
\end{equation}
where $\mathcal B_{\alpha}(\bar{\mathbf n})$ has been introduced in \eqref{alpha-neighb}.
We denote by $n^{x}$ the solution of \eqref{dynsyst} restricted to its $D-1$ first coordinates with initial
condition $x\in\R^{D-1}$, and we set $d(x,\bar{\mathbf n}):=\inf_{s\in[0,T]}\Vert x-\bar{\mathbf n}(s)\Vert$.

To prove Theorem \ref{th_proba_inv}, the idea is to see the mutant process $N_D^K$, until it reaches the size $0$ or $\eps K$, as a birth and death process in a variable environment, the environment being the trajectory of the resident populations. To implement this strategy we first need to control the sizes of the resident type populations before the time $T_0^K\wedge T_\eps^K$, that is during a time of order $\ln K$ if $\eps K$ is reached before $0$.

The periodicity of the resident dynamics makes the analysis more delicate than in the case of an attracting equilibrium treated in \cite{champagnat2006microscopic}. Indeed, when the deterministic system converges to an equilibrium $\bar{\mathbf n}$, the sole knowledge that $\Zperp(t)$ stays in a small neighbourhood of $\bar{\mathbf n}$ is enough to know the environment seen by the mutant with the required accuracy. This is no longer the case for a periodic orbit: staying in a neighbourhood of the cycle says nothing about the position of $\Zperp(t)$ along the cycle, and demographic fluctuations make the phase of $Z^K_\perp$ diffuse along the cycle, so that $Z^K_\perp(t)$ and $\bar{\mathbf n}(t)$ may drift apart along the cycle even though both stay in its vicinity.

What we will prove instead is that at any time $t$ the resident populations are close to $\bar{\mathbf n}(t+\theta)$ for a phase $\theta$ which itself evolves slowly, by increments of order $\eps$ at each period. The deterministic tool which makes this possible is the asymptotic phase associated with a stable hyperbolic limit cycle, which we recall in Lemma \ref{lem:phase}. Near such a cycle the flow does not amplify perturbations: it contracts them transversally and only shifts the phase. Combining this with a one-period estimate for the deviation of $\mathbf{Z}^K$ from its deterministic limit (Lemma \ref{lem:onestep}), we obtain the control of the environment that we need (Proposition \ref{prop:env}), and in particular \eqref{small_bias}.

We will then use this control in two different ways, and this is where the elementary observation made in \eqref{eq:phase-inv} comes into play. Because of \eqref{eq:phase-inv}, the phase drift, which we are only able to bound by $\mathcal O(\eps\ln K)$ and which we therefore cannot neglect on the time scale of the invasion, has no effect on the mean growth rate of the mutant population. This is what allows us to prove \eqref{extinction} and \eqref{survive_ND} on the whole  invasion time scale. The extinction probability, on the contrary, does depend on the phase at which the mutant arises, as the examples of Section \ref{sec_examples} show quantitatively, and we shall obtain \eqref{not_survive_ND} by comparing $N_D^K$ with birth and death processes in a $T$-periodic environment during a bounded number of periods, over which the phase has not had time to drift. The last ingredient is that these limiting processes are exactly solvable: being linear birth and death processes in a periodic environment, their extinction probability is given by Kendall's formula, which we state in Lemma \ref{lem:kendall}.

Let us recall the classical vocabulary attached to a periodic orbit, in the form in which it was already used in Section
\ref{sec:results}. Let $x_0:=\bar{\mathbf n}(0)$ and let $\Sigma:=\{x\in\R^{D-1}\colon \langle x-x_0,r(x_0)\rangle=0\}$ be the affine hyperplane through $x_0$ orthogonal to the velocity $r(x_0)$ of the orbit, where $r$ denotes the vector field \eqref{def_r} of the system \eqref{dynsyst} restricted to its $D-1$ first coordinates. Since $r(x_0)\neq0$, the implicit function theorem provides a neighbourhood $U_0$ of $x_0$ in $\Sigma$ and a map $\tau\colon U_0\to(0,\infty)$, as regular as the vector field and satisfying $\tau(x_0)=T$, such that $n^{x}(\tau(x))\in\Sigma$. The Poincar\'e map is
$$
   P\colon U_0\to\Sigma,\qquad P(x):=n^{x}\bigl(\tau(x)\bigr),
$$
and $x_0$ is a fixed point of $P$. The eigenvalues $\mu_2,\dots,\mu_{D-1}$ of the $(D-2)\times(D-2)$ matrix $DP(x_0)$ are the nontrivial Floquet multipliers of the cycle. Equivalently, they are the eigenvalues other than the trivial one $\mu_1=1$ of the monodromy matrix $\Phi(T)$, where $\Phi$ solves the variational equation $\dot\Phi(t)=Dr(\bar{\mathbf n}(t))\Phi(t)$, $\Phi(0)=\mathrm{Id}$. Our standing assumption that $\bar{\mathbf n}$ is a stable hyperbolic limit cycle means, as in Section \ref{sec:results}, that $|\mu_j|<1$ for every $j\ge2$.

\begin{lemma}[asymptotic phase]\label{lem:phase}
Assume that $\bar{\mathbf n}$ is a stable hyperbolic periodic orbit of period $T$ of \eqref{dynsyst} restricted to its $D-1$ first coordinates. Then there exist $\alpha_0>0$, finite constants $C_\Phi\ge 1$ and $L_\Theta$, a real number
$\lambda>0$, and a map $\Theta\colon\mathcal B_{\alpha_0}(\bar{\mathbf n})\to\R/T\Z$, called the asymptotic phase, with the following three properties.
\begin{enumerate}
\item[(i)] $\Theta$ is $L_\Theta$-Lipschitz:
\begin{equation}\label{eq:Theta-Lip}
   |\Theta(x)-\Theta(y)|\le L_\Theta\Vert x-y\Vert,
   \qquad x,y\in\mathcal B_{\alpha_0}(\bar{\mathbf n}),
\end{equation}
the left-hand side belonging to $\R/T\Z$.
\item[(ii)] Every trajectory starting in $\mathcal B_{\alpha_0}(\bar{\mathbf n})$ becomes
exponentially close to the point of the cycle prescribed by its phase: for every $x\in\mathcal B_{\alpha_0}(\bar{\mathbf n})$ and every $t\ge0$,
\begin{equation}\label{eq:asymptotic-phase}
  \bigl\Vert n^{x}(t)-\bar{\mathbf n}\bigl(t+\Theta(x)\bigr)\bigr\Vert
   \;\le\; C_\Phi\, e^{-\lambda t}\, d(x,\bar{\mathbf n}).
\end{equation}
\item[(iii)] For every $s,t \geq 0$, $x\in\mathcal B_{\alpha_0}(\bar{\mathbf n})$, $\Theta(\bar{\mathbf n}(s))=s$ and $\Theta(n^{x}(t))=\Theta(x)+t$ in $\R/T\Z$.
\end{enumerate}
Moreover $\lambda$ may be taken to be any positive number smaller than $-\frac1T\ln\max_{j\ge2}|\mu_j|$.
\end{lemma}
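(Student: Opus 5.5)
We would prove Lemma~\ref{lem:phase} with the Poincar\'e map $P$ and its return time $\tau$, defining the asymptotic phase through the successive hitting times of the section $\Sigma$. The alternative is to invoke the strong-stable foliation of a normally attracting invariant manifold, in the style of Fenichel; the Poincar\'e-map route is more elementary and gives the explicit rate directly.

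\textbf{Step 1: contraction of $P$.} Fix $q$ with $\max_{j\ge2}|\mu_j|<q<1$ and set $\lambda:=-\frac1T\ln q$, which covers every admissible $\lambda$. Choose a norm $|\cdot|_*$ on $\Sigma$ adapted to $DP(x_0)$, so that $\|DP(x_0)\|_*<q$. By continuity of $DP$, the map $P$ is a $q$-contraction for $|\cdot|_*$ on a small ball $V\subset U_0$ around $x_0$. Hence $|P^k(y)-x_0|_*\le q^k|y-x_0|_*$ and $|P^k(y)-P^k(y')|_*\le q^k|y-y'|_*$ for $y,y'\in V$.

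\textbf{Step 2: first hit of $\Sigma$.} For $x$ in a thin tube $\mathcal B_{\alpha_0}(\bar{\mathbf n})$, let $t_0(x)\in[0,T+1)$ be the first time with $n^x(t_0)\in V$, and set $y(x):=n^x(t_0(x))$. By transversality of the flow to $\Sigma$, the implicit function theorem and continuity of the flow on compact time intervals, we have $|y(x)-x_0|\le C\,d(x,\bar{\mathbf n})$. The map $x\mapsto(t_0(x),y(x))$ is Lipschitz away from the set of points lying exactly on $\Sigma$, where $t_0$ jumps by a period. This jump disappears once we work modulo $T$; that is why $\Theta$ takes values in $\R/T\Z$, and we will treat it by using two sections if needed.

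\textbf{Step 3: definition of $\Theta$ and properties (i), (iii).} Let $t_k(x):=t_0(x)+\sum_{j=0}^{k-1}\tau(P^j y(x))$ be the successive hitting times. Define
\[
\Theta(x):=\lim_{k\to\infty}\bigl(kT-t_k(x)\bigr)\bmod T
=-t_0(x)-\sum_{j\ge0}\bigl(\tau(P^jy)-T\bigr).
\]
Since $\tau$ is $C^1$ with $\tau(x_0)=T$, we get $|\tau(P^jy)-T|\le Cq^j|y-x_0|$, so the series converges. For the same reason, the Lipschitz constants of the partial sums are summable: $|\tau(P^jy)-\tau(P^jy')|\le Cq^j|y-y'|$. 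Combined with Step 2, this gives~(i).

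Property~(iii) follows from the definition. If $x=\bar{\mathbf n}(s)$, then $y=x_0$, $t_0=T-s$ and all later returns take exactly time $T$, so $\Theta(x)=s$. Flowing $x$ for a time $t$ shifts every hitting time by $-t$, up to relabelling $k$, so $\Theta(n^x(t))=\Theta(x)+t$.

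\textbf{Step 4: property (ii).} Write $\theta:=\Theta(x)$. The exact identity $n^x(t_k)=P^ky$ compares with $\bar{\mathbf n}(t_k+\theta)$ as follows. By definition of $\Theta$,
\[
t_k+\theta\equiv kT+\textstyle\sum_{j\ge k}\bigl(T-\tau(P^jy)\bigr)\pmod T ,
\]
so the point of the cycle at phase $t_k+\theta$ differs from $x_0=\bar{\mathbf n}(kT)$ by at most $Cq^k|y-x_0|$. Therefore
\[
\|n^x(t_k)-\bar{\mathbf n}(t_k+\theta)\|\le Cq^k|y-x_0|\le C'e^{-\lambda t}d(x,\bar{\mathbf n}).
\]
For $t\in[t_k,t_{k+1}]$, a Gronwall estimate over one bounded time interval, with a Lipschitz vector field, propagates this bound. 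The initial segment $[0,t_0]$ is handled the same way using Step 2. Absorbing constants gives $C_\Phi$, which we may take $\ge1$.

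\textbf{Main obstacle.} We expect the delicate point to be uniformity, namely choosing $\alpha_0$ so that every orbit from $\mathcal B_{\alpha_0}(\bar{\mathbf n})$ enters $V$ and stays there. This follows from the contraction in Step~1 once $\alpha_0$ is small relative to the radius of $V$. The other delicate point is making the Lipschitz bound~(i) global across the discontinuity of $t_0$ at $\Sigma$. There we will first try to use that $\Theta$ is already continuous mod $T$, which is clear from~(iii) because crossing $\Sigma$ changes $t_0$ by a full period. Then we will patch local Lipschitz bounds on a finite cover of the compact tube.
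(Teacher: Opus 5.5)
Your proposal is correct, and it takes a genuinely different route from the paper. The paper does not prove Lemma~\ref{lem:phase}. It quotes the classical asymptotic phase theorem from Hartman, Guckenheimer, Chicone and Kuznetsov, asserts that $\Theta$ is $\mathcal C^1$ on a tubular neighbourhood of the orbit because the vector field is $\mathcal C^1$, and deduces (i) by compactness.

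You instead construct $\Theta$ explicitly from the return times, $\Theta(x)=-t_0(x)-\sum_{j\ge0}\bigl(\tau(P^jy(x))-T\bigr)$. All three properties then follow from the geometric decay $|P^jy-x_0|_*\le q^j|y-x_0|_*$ in a norm adapted to $DP(x_0)$. This makes explicit several things the citation leaves implicit:
\begin{itemize}
\item the admissible range of $\lambda$ appears directly through the choice $q\in(\max_{j\ge2}|\mu_j|,1)$;
\item the Lipschitz bound only uses that $P$ and $\tau$ are $\mathcal C^1$, through summable constants of order $q^j$;
\item the transient factor $C_\Phi\ge1$ comes visibly from Gr\"onwall estimates on $[0,t_0]$ and on each return interval;
\item differentiating your series termwise, with $\|DP^j\|\le Cq^j$ on $V$, actually proves the $\mathcal C^1$ regularity of $\Theta$ that the paper takes for granted.
\end{itemize}
The price is the bookkeeping near $\Sigma$, which the citation avoids.

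A few small points in that bookkeeping need tightening.
\begin{itemize}
\item Crossing $\Sigma$ changes $t_0$ by $\tau(z)$ for the crossing point $z$, not by exactly $T$. The exact jump $T$ of the unwrapped phase comes from re-indexing the series.
\item The cleanest way to get both continuity and the Lipschitz bound near $\Sigma$ is your own (iii) with a fixed time: $\Theta(x)=\Theta(n^x(T/2))-T/2$. Here $x\mapsto n^x(T/2)$ is Lipschitz, and $n^x(T/2)$ is far from $V$ when $x$ is near $x_0$.
\item Global Lipschitz on the non-convex tube then follows in two cases. When $\|x-y\|\ge\delta$, use $|\Theta(x)-\Theta(y)|\le T/2$. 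When $\|x-y\|<\delta$, use the segment $[x,y]$, which lies in a ball $B(\bar{\mathbf n}(s),\alpha_0+\delta)$ inside a slightly larger tube where $\Theta$ is uniformly locally Lipschitz.
\item The distance to the cycle may first grow by the factor $C_\Phi$, so $n^x(t)$ can leave $\mathcal B_{\alpha_0}(\bar{\mathbf n})$. For (iii) to make sense, either define $t_0$ as the first hitting time of $V$ without the cap $T+1$, or build $\Theta$ on a tube of radius $\alpha_1$ and take $\alpha_0\le\alpha_1/C_\Phi$.
\item Entry of every orbit into $V$ comes from your Step 2 (continuity of the flow and transversality), not from the contraction. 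The contraction only keeps the Poincar\'e iterates inside $V$.
\end{itemize}
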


This is the classical asymptotic phase theorem for stable hyperbolic limit cycles. The different properties stated in Lemma \ref{lem:phase} may be found in \cite{Hartman, Guckenheimer75, Chicone, kuznetsov} for instance.  
Since the vector field is of class $\mathcal C^1$, $\Theta$ is of class $\mathcal C^1$ on a tubular neighbourhood of the orbit; for $\alpha_0$ small enough, $B_{\alpha_0}(\bar{\mathbf n})$ is contained in such a neighbourhood, and \eqref{eq:Theta-Lip} follows by compactness.
The constant $C_\Phi\ge1$ accounts for the transient behaviour: the convergence in \eqref{eq:asymptotic-phase} is exponential, but the distance to the cycle may first increase by the bounded factor $C_\Phi$ before decreasing.

Replacing if necessary $T$ by an integer multiple $mT$ (which changes nothing by \eqref{eq:phase-inv}), we assume from now on
\begin{equation}\label{eq:contraction}
  \kappa:=C_\Phi e^{-\lambda T}<\tfrac12 .
\end{equation}

Let us stress that this replacement is not only cosmetic, since $C_\Phi \geq 1$ and $\mu_2$ may be close to one, the contraction of the Poincaré map over a single period can be arbitrarily weak, and several periods may be needed. This replacement changes the constant $D_1$ in \eqref{eq:proba-E} into $8(D-1)/(mT)$, and the times $nT$ in \eqref{small_bias_phase} into $nmT$; we keep the notation $T$ for simplicity.

\begin{lemma}[one--period deviation]\label{lem:onestep}
There exist $\eta_0>0$ and finite positive constants $C_0,c_0$, depending only on $T$, $\alpha_0$ and the rates $(b_i,d_i, i\leq D)$, such that for every $\eta\le\eta_0$, $K>0$ and $z\in \mathcal B_{\alpha_0}(\bar{\mathbf n})\times[0,\eta]$,
\begin{equation}\label{eq:onestep}
  \P_{z}\Bigl(\ \sup_{t\leq T\wedge T^{K}_{\eta}\wedge T_{\alpha_0}^{K,ex}}
       \bigl\|\Zperp(t)-n^{z_\perp}(t)\bigr\|> C_0\eta \Bigr)\ \leq 4 (D-1) e^{-c_0 K\eta^{2}},
\end{equation}
where $z_\perp$ denotes the $D-1$ first coordinates of $z$ and $n^{z_\perp}$ is the solution of \eqref{dynsyst} restricted to its $D-1$ first coordinates, started at $z_\perp$.
\end{lemma}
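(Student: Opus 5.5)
We prove Lemma \ref{lem:onestep} by the standard martingale decomposition plus Gronwall argument, as in \cite[Chapter 11]{ethier2009markov} or \cite{champagnat2006microscopic}, keeping the constants explicit. Set $\tau:=T\wedge T^K_\eta\wedge T^{K,ex}_{\alpha_0}$.

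\textbf{Step 1: semimartingale decomposition.} For $i\le D-1$ we write
$$Z_i^K(t)=z_i+\int_0^t Z_i^K(s)\bigl(b_i-d_i\bigr)(\mathbf Z^K(s))\,ds+M_i^K(t),$$
where $M_i^K$ is a square-integrable martingale with jumps of size $1/K$ and predictable quadratic variation $\frac1K\int_0^t Z_i^K(b_i+d_i)(\mathbf Z^K(s))\,ds$.

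\textbf{Step 2: uniform constants on a compact set.} Before $\tau$, $\Zperp$ lies in the bounded set $\overline{\mathcal B_{\alpha_0}(\bar{\mathbf n})}$ and $Z_D^K\le\eta\le\eta_0$. This whole state space lives in a fixed compact set $\mathcal K$ of $\R_+^D$, on which the $\mathcal C^1$ rates give the following bounds:
\begin{itemize}
\item the total jump rate of each resident coordinate, divided by $K$, is at most $\Lambda$;
\item the map $x\mapsto x_i(b_i-d_i)(x,y)$ is $L$-Lipschitz in $x$;
\item the same map is $L$-Lipschitz in $y$, so that replacing $Z_D^K$ by $0$ costs at most $L\eta$ in the drift.
\end{itemize}
The deterministic solution $n^{z_\perp}$ also stays, for $t\le T$, in a compact set. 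This follows from the continuous dependence and boundedness of the flow on $[0,T]$ near the cycle; we can take $\alpha_0$ smaller if needed, and we enlarge $\mathcal K$ accordingly.

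\textbf{Step 3: Gronwall.} Comparing with $n^{z_\perp}(t)=z_\perp+\int_0^t \mathbf r(n^{z_\perp}(s))\,ds$, we get for $t\le\tau$
$$\|\Zperp(t)-n^{z_\perp}(t)\|\le \sup_{s\le t\wedge\tau}\|M^K(s)\|+LT\eta+L\int_0^t\|\Zperp(s)-n^{z_\perp}(s)\|\,ds .$$
Gronwall then gives
$$\sup_{t\le\tau}\|\Zperp(t)-n^{z_\perp}(t)\|\le e^{LT}\Bigl(\sup_{s\le\tau}\|M^K(s)\|+LT\eta\Bigr).$$
We choose $C_0:=e^{LT}(LT+1)$. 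It then suffices to show that
$$\P\Bigl(\sup_{s\le\tau}|M_i^K(s)|>\eta\Bigr)\le 4e^{-c_0K\eta^2}\quad\text{for each } i.$$
Summing over the $D-1$ coordinates by a union bound gives the stated factor $4(D-1)$.

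\textbf{Step 4: exponential martingale bound (the main obstacle).} We need a concentration estimate that is uniform in the starting point and holds at every scale $\eta\le\eta_0$. We split $M_i^K$ into the compensated birth part and the compensated death part. Each part is bounded separately, for the upper and the lower deviation, which accounts for the factor $4$.

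For each part and each $\lambda>0$ we use the exponential supermartingale
$$\exp\Bigl(\lambda M(t\wedge\tau)-\int_0^{t\wedge\tau}K\,\phi(\lambda/K)\,\mathrm{rate}\,ds\Bigr),\qquad \phi(u)=e^u-1-u .$$
Before $\tau$, the rate divided by $K$ is at most $\Lambda$, and $\phi(u)\le u^2$ for $u\le1$. Doob's maximal inequality then gives
$$\P\Bigl(\sup_{t\le\tau}M(t)>\eta/2\Bigr)\le \exp\Bigl(-\lambda\eta/2+\Lambda T\lambda^2/K\Bigr).$$
We optimize at $\lambda=K\eta/(4\Lambda T)$. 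This choice satisfies $\lambda/K\le1$ as soon as $\eta_0\le 4\Lambda T$, and it yields the bound $e^{-K\eta^2/(16\Lambda T)}$, so we may take $c_0=1/(16\Lambda T)$.

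The only care needed is to keep every constant depending only on $T$, $\alpha_0$ and the rates. The stopping at $\tau$ is exactly what makes the bound $\Lambda$ valid. The bound holds for every $K>0$ since no asymptotics are used.
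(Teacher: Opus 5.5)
Your proposal is correct and follows essentially the paper's route: a drift/martingale decomposition, Gronwall on $[0,T]$ with an $O(\eta)$ cost for replacing $Z_D^K$ by $0$, and an exponential maximal inequality for the compensated birth and death martingales of each resident coordinate, bounded in both directions, which yields the factor $4(D-1)$. The only technical difference is in the concentration step: the paper time-changes each martingale into a rate-one compensated Poisson process on the deterministic interval $[0,K\bar\beta T]$ and applies a Bennett-type bound, whereas you use the exponential supermartingale of the counting process directly with $\phi(u)\le u^2$, which only adds the harmless restriction $\eta_0\le 4\Lambda T$.
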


Let us stress that $\eta$ bounds the mutant density and the size of the deviation, but not the distance from $z_\perp$ to the cycle, which is only required to be at most $\alpha_0$. This decoupling is what makes the induction of Proposition~\ref{prop:env} work: there, the process is only known to lie within distance $c_1\eps$ of the cycle, with $c_1>1$, whereas the deviation produced over one period must remain of size $C_0\eps$, with a constant $C_0$ that does not depend on $c_1$.

\begin{proof}
Denoting $\beta_i(z):=z_ib_i(z)$ and $\delta_i(z):=z_id_i(z)$, we may write $\Zperp$ as the strong solution to the following SDE, for $i\le D-1$ and $t\le T\wedge T_\eta^K\wedge T_{\alpha_0}^{K,ex}$
$$
  Z_i^K(t)=Z_i^K(0)+\int_0^t\bigl(\beta_i-\delta_i\bigr)(\mathbf{Z}^K(s))\,ds
  +\frac1K \widetilde Q^{+}_i\Bigl(K\!\int_0^t\!\beta_i(\mathbf{Z}^K(s))ds\Bigr)
  -\frac1K \widetilde Q^{-}_i\Bigl(K\!\int_0^t\!\delta_i(\mathbf{Z}^K(s))ds\Bigr),
$$
with $\widetilde Q^{\pm}_i$ independent compensated rate $1$ Poisson processes.  Note that the initial distance from $z_\perp$ to the cycle plays no role below: both $Z_\perp$ and $n^{z_\perp}$ start at $z_\perp$, so that the Gr\"onwall estimate is applied with a vanishing initial error. Before time $T\wedge T_\eta^K\wedge T_{\alpha_0}^{K,ex}$, $\mathbf{Z}^K$ stays in $\overline{\mathcal B_{\alpha_0}(\bar{\mathbf n})}\times[0,\eta]$ (as $N^K_D$ has jumps $\pm1$, $Z^K_D(s)<\eta$ for $s<T^K_\eta$), and, by \eqref{eq:asymptotic-phase}, $n^{z_\perp}$ stays in $\overline{\mathcal B_{C_\Phi\alpha_0}(\bar{\mathbf n})}$. On the compact set $\big(\overline{\mathcal B_{C_\Phi\alpha_0}(\bar{\mathbf n})}\cap\R_+^{D-1}\big)\times[0,\eta_0]$, $\beta_i,\delta_i$ are bounded by some $\bar\beta<\infty$, and $\beta_i-\delta_i$ is Lipschitz with some constant $\mathcal M$ in its first $D-1$ variables and with some constant $\mathcal K$ in its last one. Writing 
$$(\beta_i-\delta_i)(\mathbf{Z}^K)=(\beta_i-\delta_i)(\Zperp,0)
+O(Z_D^K)=(\beta_i-\delta_i)(\Zperp,0)+O(\eta)$$
and applying Gr\"onwall's lemma on the time interval  $[0,T]$ gives, with the notation $$\vartheta_K:=T\wedge T_\eta^K\wedge T_{\alpha_0}^{K,ex},$$
$$
  \sup_{t\le \vartheta_K}\|\Zperp(t)-n^{z_\perp}(t)\|
  \ \le\ e^{\mathcal M T}\Bigl(\mathcal K T\eta+ 2\max_{i\le D-1}\ \max_{*\in\{+,-\}}
     \ \sup_{t\le \vartheta_K}\bigl|M^{*}_i(t)\bigr|\Bigr),
$$
where, for $*=+$,
$$
  M^{+}_i(t):=\frac1K\,\widetilde Q^{+}_i\bigl(A_i(t)\bigr),
  \qquad A_i(t):=K\!\int_0^t\!\beta_i(\mathbf{Z}^K(s))\,ds,
$$
and similarly for $*=-$ with $\delta_i$ in place of $\beta_i$. On the time interval $[0,\vartheta_K]$ the rates are bounded by $\bar\beta$, so that the time change $A_i$, which is continuous and nondecreasing, takes its values in $[0,\Lambda]$ with $\Lambda:=K\bar\beta T$. Consequently
$$
  \sup_{t\le \vartheta_K}\big|M^{*}_i(t)\big|
    \leq \frac1K\sup_{u\leq \Lambda}\big|\widetilde Q^{*}_i(u)\big| ,
$$
and it only remains to control the fluctuations of a rate $1$ compensated Poisson process $\widetilde Q$ over the deterministic interval $[0,\Lambda]$.

Let $(\mathcal G_u)_{u\ge0}$ be the natural filtration of $\widetilde Q$, and let $x>0$. For $\ell\ge0$ and $\theta\in\R$, $\E[e^{\theta\widetilde Q(\ell)}]=e^{\ell(e^\theta-1-\theta)}$, since $\widetilde Q(\ell)+\ell$ is Poisson distributed with parameter $\ell$. For $0\le s\le u$, the independence and stationarity of the increments of $\widetilde Q$ thus give
$$   \E\big[e^{\theta\widetilde Q(u)}\,\big|\,\mathcal G_s\big]=e^{\theta\widetilde Q(s)}\,e^{(u-s)(e^\theta-1-\theta)}\ \ge\ e^{\theta\widetilde Q(s)},$$
because $e^\theta-1-\theta\ge0$. Hence $(e^{\theta\widetilde Q(u)})_{u\ge0}$ is a non-negative c\`adl\`ag submartingale, and Doob's maximal inequality $\P(\sup_{u\le\Lambda}X_u\ge c)\le\E[X_\Lambda]/c$, valid for such a submartingale $X$ and $c>0$, yields, for $\theta>0$,
$$   \P\Big(\sup_{u\le\Lambda}\widetilde Q(u)\ge x\Big)
  =\P\Big(\sup_{u\le\Lambda}e^{\theta\widetilde Q(u)}\ge e^{\theta x}\Big)
  \le e^{-\theta x}\,\E\big[e^{\theta\widetilde Q(\Lambda)}\big]
  =\exp\big(-\theta x+\Lambda(e^\theta-1-\theta)\big).$$
We choose $\theta=\ln(1+v)$, with $v:=x/\Lambda$. The exponent then equals $-x\ln(1+v)+\Lambda\big(v-\ln(1+v)\big)=-\Lambda h(v)$, where $h(v):=(1+v)\ln(1+v)-v$. Let $k(v):=\frac{v^2}{2(1+v/3)}=\frac32\big(v-3+\frac9{3+v}\big)$. We have $h(0)=k(0)=0$, $h'(v)=\ln(1+v)$ and $k'(v)=\frac32\big(1-\frac9{(3+v)^2}\big)$, which both vanish at $0$, and
$$  h''(v)=\frac1{1+v}\ \ge\ \frac{27}{(3+v)^3}=k''(v),\qquad v\ge0,$$
since $(3+v)^3=27+27v+9v^2+v^3\ge27(1+v)$. Integrating twice gives $h\ge k$ on $\R_+$, hence $\Lambda h(x/\Lambda)\ge x^2/(2(\Lambda+x/3))$ and
$$   \P\Big(\sup_{u\le\Lambda}\widetilde Q(u)\ge x\Big)\le\exp\Big(-\frac{x^2}{2(\Lambda+x/3)}\Big).$$
Similarly, for $\theta>0$, $(e^{-\theta\widetilde Q(u)})_{u\ge0}$ is a non-negative c\`adl\`ag submartingale, and Doob's maximal inequality gives
$$   \P\Big(\inf_{u\le\Lambda}\widetilde Q(u)\le -x\Big)
  \le\exp\big(-\theta x+\Lambda(e^{-\theta}-1+\theta)\big)
  \le\exp\big(-\theta x+\Lambda\theta^2/2\big),$$
where we used $e^{-\theta}-1+\theta\le\theta^2/2$: the difference $\theta^2/2-(e^{-\theta}-1+\theta)$ vanishes at $0$ and has derivative $\theta+e^{-\theta}-1\ge0$. Choosing $\theta=x/\Lambda$ gives the bound $\exp(-x^2/(2\Lambda))\le\exp(-x^2/(2(\Lambda+x/3)))$. Altogether,
\begin{equation}\label{eq:Poisson-max}
  \P\Big(\sup_{u\le\Lambda}\big|\widetilde Q(u)\big|\ge x\Big)\le2\exp\Big(-\frac{x^2}{2(\Lambda+x/3)}\Big),\qquad x>0.
\end{equation}
Taking $x=K\eta$ and $\Lambda=K\bar\beta T$ in \eqref{eq:Poisson-max}, we obtain, since $\eta\le\eta_0$,
$$  \P\Big(\sup_{t\leq \vartheta_K}\big|M^{*}_i(t)\big|\geq \eta\Big)
    \leq 2\exp\Big(-\frac{K \eta^{2}}{2(\bar\beta T+\eta_0/3)}\Big).$$
A union bound over the $2(D-1)$ martingales $M^{*}_i$ ($i\le D-1$, $*\in\{+,-\}$) then gives \eqref{eq:onestep}, with
$$    C_0:=e^{\mathcal M T}(\mathcal K T+2)
   \qquad\text{and}\qquad
   c_0:=\big(2(\bar\beta T+\eta_0/3)\big)^{-1}.$$
\end{proof}

\begin{proposition}[control of the environment on a time $\mathcal O(\ln K)$]
\label{prop:env}
There exist $\eps_0>0$ and finite positive constants $C_*,c_*$ such that the following holds for every $\eps\le\eps_0$, $C<\infty$ and $K$ large enough. Assume $\Zperp(0)\in\mathcal B_\eps(\bar{\mathbf n})$ and $Z_D^K(0)\le\eps$, and set
$\tau_K:=T_\eps^K\wedge C\ln K$ and $\theta_n:=\Theta\bigl(\Zperp(nT)\bigr)$. Then there is an event $\mathcal E_K=\mathcal E_K(\eps,C)$ with
\begin{equation}\label{eq:proba-E}
  \P\bigl(\mathcal E_K^{\,c}\bigr) \leq D_1 C\ln K e^{-c_*K\eps^{2}},
  \qquad D_1:=\frac{8(D-1)}{T},
\end{equation}
on which, for every integer $n$ with $nT\le\tau_K$,
\begin{equation}\label{eq:env-i}
\sup_{t\in[nT,(n+1)T\wedge\tau_K]}
\big\|Z^K_\perp(t)-\bar{\mathbf n}(t-nT+\theta_n)\big\|\le C_*\varepsilon,
\end{equation}
and, for every integer $n$ with $(n+1)T\le\tau_K$,
 \begin{align} &\big|\theta_{n+1}-\theta_n\big|\leq C_*\eps,
  \label{eq:env-ii}\\[2pt]
  &\Big|\int_{nT}^{(n+1)T}\rho_D\big(\mathbf{Z}^K(s)\big)ds-T f(\bar{\mathbf n})\Big|\leq C_*\eps .
  \label{eq:env-iii}
\end{align}
In particular $\Zperp(t)\in\mathcal B_{C_*\eps}(\bar{\mathbf n})$ for every $t\leq\tau_K$.
\end{proposition}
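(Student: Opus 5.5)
The plan is an induction over periods. At each period we restart the deterministic flow from the current resident state, use Lemma~\ref{lem:onestep} to control the stochastic deviation over that period, and use Lemma~\ref{lem:phase} to contract the distance to the cycle. Write $\delta_n:=d(\Zperp(nT),\bar{\mathbf n})$.

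We define $\mathcal E_K$ as the intersection, over all integers $n$ with $nT\le C\ln K$, of the events
\[
\sup_{t\le T\wedge(T^K_\eps-nT)\wedge(T^{K,ex}_{\alpha_0}-nT)}\bigl\|\Zperp(nT+t)-n^{\Zperp(nT)}(t)\bigr\|\le C_0\eps .
\]
By the Markov property at time $nT$ and Lemma~\ref{lem:onestep} with $\eta=\eps$, each event fails with probability at most $4(D-1)e^{-c_0K\eps^2}$. There are at most $C\ln K/T+1\le 2C\ln K/T$ such $n$, so a union bound gives \eqref{eq:proba-E} with $c_*=c_0$.

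On $\mathcal E_K$ we prove by induction that $\delta_n\le c_1\eps$ for a constant $c_1$ to be chosen. The base case is $\delta_0<\eps$. For the inductive step, assume $\delta_n\le c_1\eps$ and take $\eps_0$ small enough that $C_\Phi c_1\eps_0+C_0\eps_0<\alpha_0$.
\begin{itemize}
\item By \eqref{eq:asymptotic-phase} and Lemma~\ref{lem:phase}(iii), for $t\in[0,T]$,
\[
\|n^{\Zperp(nT)}(t)-\bar{\mathbf n}(t+\theta_n)\|\le C_\Phi c_1\eps .
\]
\item Adding the deviation $C_0\eps$ gives
\[
\|\Zperp(nT+t)-\bar{\mathbf n}(t+\theta_n)\|\le (C_\Phi c_1+C_0)\eps<\alpha_0 ,
\]
so the exit time $T^{K,ex}_{\alpha_0}$ is not reached during the period. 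This yields \eqref{eq:env-i} with $C_*\ge C_\Phi c_1+C_0$.
\item At $t=T$, \eqref{eq:contraction} gives
\[
\delta_{n+1}\le\kappa c_1\eps+C_0\eps\le (c_1/2+C_0)\eps .
\]
Choosing $c_1:=\max(1,2C_0)$ gives $\delta_{n+1}\le c_1\eps$, closing the induction. It matters here that $C_0$ does not depend on $c_1$, as the remark after Lemma~\ref{lem:onestep} points out.
\item For the phase, by Lemma~\ref{lem:phase}(iii), $\Theta(n^{\Zperp(nT)}(T))=\theta_n+T=\theta_n$ in $\R/T\Z$. The Lipschitz bound \eqref{eq:Theta-Lip} then gives
\[
|\theta_{n+1}-\theta_n|\le L_\Theta\,\|\Zperp((n+1)T)-n^{\Zperp(nT)}(T)\|\le L_\Theta C_0\eps ,
\]
which is \eqref{eq:env-ii}.
\end{itemize}
The Lipschitz estimate needs both points to lie in $\mathcal B_{\alpha_0}(\bar{\mathbf n})$, which they do. The segment between them might leave the tube, but since $\Theta$ is $\mathcal C^1$ on a tubular neighbourhood we shrink $\alpha_0$ if needed, as in the discussion after Lemma~\ref{lem:phase}.

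For \eqref{eq:env-iii}, the function $\rho_D$ is Lipschitz on the compact set $\overline{\mathcal B_{\alpha_0}}\times[0,\eps_0]$ with some constant $L_\rho$. Using \eqref{eq:env-i} and $Z^K_D\le\eps$,
\[
\Bigl|\int_{nT}^{(n+1)T}\rho_D(\mathbf Z^K(s))\,ds-\int_0^T\rho_D(\bar{\mathbf n}_{\theta_n}(s),0)\,ds\Bigr|\le T L_\rho(C_*+1)\eps .
\]
By \eqref{eq:phase-inv} the second integral equals $Tf(\bar{\mathbf n})$. Finally we take $C_*$ to be the maximum of all the constants obtained above.

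The main obstacle is the bookkeeping at the stopping time. The one-period estimate only holds up to $T^K_\eps\wedge T^{K,ex}_{\alpha_0}$, so we must check that the bound on $\mathcal E_K$ excludes $T^{K,ex}_{\alpha_0}$ inside every period. We must also handle the last, truncated period $[nT,\tau_K]$ in \eqref{eq:env-i}, where only the bound up to $\tau_K$ is claimed. In addition, the Markov restarts must be applied at deterministic times $nT$ on the event $\{nT<\tau_K\}$, so that the initial condition $Z^K_D(nT)\le\eps$ required by Lemma~\ref{lem:onestep} holds.
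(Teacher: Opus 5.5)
Your proposal is correct and follows the paper's proof essentially step for step. You restart the flow at each $nT$, control the one-period deviation with Lemma~\ref{lem:onestep}, and close the induction $d(\Zperp(nT),\bar{\mathbf n})\le c_1\eps$ using \eqref{eq:contraction}; your $c_1=\max(1,2C_0)$ plays the role of the paper's $C_0/(1-\kappa)\vee 1$. You then get the phase increments from the Lipschitz property of $\Theta$, obtain \eqref{eq:env-iii} from phase invariance, and conclude with a union bound. One point in your favour: you define the events with the global stopping times $T^K_\eps$ and $T^{K,ex}_{\alpha_0}$. Each event is then trivially realized once one of these times has passed, and otherwise $\mathbf Z^K(nT)$ lies in the domain of Lemma~\ref{lem:onestep}. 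This lets your direct union bound work without the paper's first-failure decomposition.

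There is one bookkeeping slip to fix. In the bound for \eqref{eq:env-iii}, use the explicit constant $C_\Phi c_1+C_0$ from \eqref{eq:env-i} instead of $C_*$. As written, you define $C_*$ as a maximum that includes $TL_\rho(C_*+1)$, which is circular and impossible if $TL_\rho\ge 1$. You also need $\eps_0\le\eta_0$ in order to apply Lemma~\ref{lem:onestep} with $\eta=\eps$.
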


\begin{proof}
Let us first fix the constants. Recall $C_\Phi$ and $L_\Theta$ from Lemma \ref{lem:phase}, $C_0$ and $c_0$ from Lemma \ref{lem:onestep}, $\kappa$ from \eqref{eq:contraction} and let $L_\rho$ be a Lipschitz constant for $\rho_D$ on the compact set $\overline{\mathcal B_{\alpha_0}(\bar{\mathbf n})}\times[0,\eta_0]$. We set
\begin{equation}\label{eq:choice-Cstar}
  c_1:=\frac{C_0}{1-\kappa} \vee 1,
  \qquad
  C_*:=\big(C_0+C_\Phi c_1\big) \vee L_\Theta C_0
        \vee\ TL_\rho\big(C_0+C_\Phi c_1+1\big) ,
\end{equation}
and $c_*:=c_0$, and we take $\varepsilon_0:=\alpha_0/\big(2(c_1\vee C_*)\big)\wedge\eta_0$, so that
$c_1\varepsilon$ and $C_*\varepsilon$ are smaller than $\alpha_0$ for $\eps\leq\eps_0$. The constant $c_1$ will bound the distance to the cycle at the times $nT$, while $C_*$ bounds the three quantities appearing in \eqref{eq:env-i}--\eqref{eq:env-iii}. Note that $c_1\ge1$, so that the hypothesis $\Zperp(0)\in\mathcal B_\eps(\bar{\mathbf n})$ gives $u_0\le c_1\eps$, where we have denoted $u_n:=d\big(\Zperp(nT),\bar{\mathbf n}\big)$.

The first step of the proof consists in showing that for all $n$ such that $nT \leq \tau_K$, $u_n \leq c_1 \eps$. We will proceed by induction.

\medskip\noindent\emph{The induction.} Let $N:=\lfloor C\ln K/T\rfloor$ and, for
$0\le n\le N$, let $G_n$ be the event
$$
   G_n:=\Big\{\sup_{t\le T\wedge\sigma_n}
     \big\Vert\Zperp(nT+t)-n^{\Zperp(nT)}(t)\big\Vert\leq C_0\eps\Big\},$$
where
\[   \sigma_n:=(\tau_K-nT)^{+}\wedge
     \inf\big\{t\geq 0: \Zperp(nT+t)\notin\mathcal B_{\alpha_0}(\bar{\mathbf n})\big\},
     \numberthis\label{sigmandef}
\]
the two stopping times being exactly those appearing in Lemma \ref{lem:onestep}, shifted by $nT$: indeed $\tau_K\le T_\eps^K$, and the second one is the exit time $T^{K,ex}_{\alpha_0}$ of the shifted process. Note that $G_n$ is automatically realized when $nT\ge\tau_K$, since then $\sigma_n=0$ and the supremum reduces to its value at $t=0$, which is $0$.

Set $\mathcal E_K:=\bigcap_{n=0}^{N}G_n$. We will prove that on $\mathcal E_K$ one has $u_n\le c_1\eps$ for every $n$ with $nT\le\tau_K$. This holds for $n=0$, and we assume it for some $n$ with $(n+1)T\le\tau_K$. Since $u_n\le c_1\eps\le\alpha_0$ and, by definition of $T_\eps^K$, $Z_D^K(t)\le\eps$ for $t\le\tau_K$, the point $\mathbf{Z}^K(nT)$ belongs to $\mathcal B_{\alpha_0}(\bar{\mathbf n})\times[0,\eps]$, so that Lemma \ref{lem:onestep} may be applied at time $nT$ with $\eta=\eps$. This is precisely the event $G_n$. On the other hand, Lemma \ref{lem:phase} applied to the deterministic solution started at $\Zperp(nT)$, together with $\bar{\mathbf n}(T+\theta_n)=\bar{\mathbf n}(\theta_n)$,
gives
\begin{equation}\label{eq:contract-one-step}
  \big\Vert n^{\Zperp(nT)}(t)-\bar{\mathbf n}(t+\theta_n)\big\Vert
   \leq C_\Phi e^{-\lambda t}u_n\leq C_\Phi u_n,\quad t\geq 0.\end{equation}
 In particular, when $t=T$ we obtain according to \eqref{eq:contraction}, 
\begin{equation}\label{eq:contract-one-step-bis}  \big\Vert n^{\Zperp(nT)}(T)-\bar{\mathbf n}(\theta_n)\big\Vert\leq\kappa u_n <\frac{1}{2}u_n .\end{equation}
Combining the two estimates by the triangle inequality,
$$
  u_{n+1} \leq \bigl\Vert\Zperp((n+1)T)-n^{\Zperp(nT)}(T)\big\Vert
   +\big\Vert n^{\Zperp(nT)}(T)-\bar{\mathbf n}(\theta_n)\big\Vert
   \leq C_0\eps+\kappa\,u_n\ \le\ C_0\eps+\kappa c_1\eps \leq c_1\eps ,
$$
the last inequality being the definition of $c_1$ in \eqref{eq:choice-Cstar}. Let us check, before going on, that the exit time appearing in $\sigma_n$ in~\eqref{sigmandef} is not active, so that $\sigma_n=(\tau_K-nT)^{+}$ on the event under consideration. If it were, that is if $\varsigma:=\inf\{t\ge0:\Zperp(nT+t)\notin\mathcal B_{\alpha_0}(\bar{\mathbf n})\}$ were smaller than $T\wedge(\tau_K-nT)^{+}$, then $\varsigma\le\sigma_n$ and $G_n$ together with \eqref{eq:contract-one-step} would give $d(\Zperp(nT+\varsigma),\bar{\mathbf n})\le(C_0+C_\Phi c_1)\eps\le C_*\eps<\alpha_0$, whereas by definition of $\varsigma$ this distance is at least $\alpha_0$. Hence $\sigma_n=(\tau_K-nT)^{+}$, and the estimates below hold on the whole of $[nT,(n+1)T\wedge\tau_K]$.

\medskip\noindent\emph{Proof of \eqref{eq:env-i}--\eqref{eq:env-iii}.} All the
inequalities of this part are pathwise: we fix once and for all an
$\omega\in\mathcal E_K$ and every estimate below holds for that $\omega$. Let $n$ be such that $(n+1)T\le\tau_K$ and let
$t\in[nT,(n+1)T]$. Since $\omega$ belongs to $G_n$, the definition of the latter gives 
$$\Vert\Zperp(t)-n^{\Zperp(nT)}(t-nT)\Vert\le C_0\eps.$$ 
Combining this with \eqref{eq:contract-one-step} and with $u_n\le c_1\eps$, established in the previous part, the triangle inequality yields
$$
  \big\Vert\Zperp(t)-\bar{\mathbf n}(t-nT+\theta_n)\big\Vert
   \leq C_0\eps+C_\Phi u_n\leq \big(C_0+C_\Phi c_1\big)\eps\leq C_*\eps ,
$$
which is \eqref{eq:env-i}. For \eqref{eq:env-ii}, the cocycle identity $\Theta(n^{x}(T))=\Theta(x)+T=\Theta(x)$ in $\R/T\Z$ gives $\Theta\bigl(n^{\Zperp(nT)}(T)\bigr)=\theta_n$, so that, both points lying in $\mathcal B_{\alpha_0}(\bar{\mathbf n})$,
$$
  |\theta_{n+1}-\theta_n|
   =\big|\Theta(\Zperp((n+1)T))-\Theta\big(n^{\Zperp(nT)}(T)\big)\big|
   \leq L_\Theta\bigl\Vert\Zperp((n+1)T)-n^{\Zperp(nT)}(T)\big\Vert
   \leq L_\Theta C_0\eps\leq C_*\eps .
$$
Finally, for \eqref{eq:env-iii}, we use \eqref{eq:env-i}, which we have just established for the same $\omega$, together with $Z^K_D(t)\le\eps$ and the Lipschitz continuity of $\rho_D$:
$$
  \Big|\rho_D\big(\mathbf{Z}^K(t)\big)-\rho_D\big(\bar{\mathbf n}(t-nT+\theta_n),0\big)\Big|
   \leq L_\rho\Big(\big\Vert\Zperp(t)-\bar{\mathbf n}(t-nT+\theta_n)\big\Vert+Z_D^K(t)\Big)
   \leq L_\rho\big(C_0+C_\Phi c_1+1\big)\eps ,
$$
and integrating over one period,
$$
  \Big|\int_{nT}^{(n+1)T}\rho_D\big(\mathbf{Z}^K(s)\big)ds
   -\int_0^{T}\rho_D\big(\bar{\mathbf n}(u+\theta_n),0\big)du\Big|
   \leq TL_\rho\big(C_0+C_\Phi c_1+1\big)\eps\leq C_*\eps .
$$
The remaining integral equals $Tf(\bar{\mathbf n})$ for every value of $\theta_n$, by the phase invariance \eqref{eq:phase-inv}.

\medskip\noindent\emph{Proof of \eqref{eq:proba-E}.} It remains to bound the probability of $\mathcal E_K^{\,c}=\bigcup_{n=0}^{N}G_n^{\,c}$. We decompose $\mathcal E_K^{\,c}$ according to the first index at which the induction fails: setting $H_n:=\bigcap_{m<n}G_m$, with $H_0:=\Omega$, we have the disjoint union
\begin{equation}\label{eq:first-failure}
   \mathcal E_K^{\,c}=\bigcup_{n=0}^{N}\bigl(G_n^{\,c}\cap H_n\bigr),
   \qquad\text{hence}\qquad
   \P\bigl(\mathcal E_K^{c}\bigr)=\sum_{n=0}^{N}\P\big(G_n^{c}\cap H_n\big).
\end{equation}
Let us fix $n\le N$. The event $H_n$ is $\mathcal F_{nT}$-measurable, since $G_m$ only involves the process up to time $(m+1)T\le nT$ for $m<n$, and so is $\{nT<\tau_K\}$, $\tau_K$ being a stopping time. On $H_n\cap\{nT\ge\tau_K\}$
the event $G_n$ is realized, as noticed above, so that 
$$ \P\big(G_n^{c}\cap H_n\cap\{nT\ge\tau_K\}\big)=0. $$
On $H_n\cap\{nT<\tau_K\}$, the induction of the previous part gives $u_n\leq c_1\eps\le\alpha_0$, and $Z_D^K(nT)\le\eps$ by definition of $T_\eps^K$; hence $\mathbf{Z}^K(nT)\in\mathcal B_{\alpha_0}(\bar{\mathbf n})\times[0,\eps]$, and the
strong Markov property at time $nT$ together with Lemma \ref{lem:onestep} applied with $\eta=\eps$ yield
$$
   \P\big(G_n^{\,c}\mid\mathcal F_{nT}\big)
   \ \le\ 4(D-1)\,e^{-c_0K\eps^{2}}
   \qquad\text{on }H_n\cap\{nT<\tau_K\} .
$$
Taking expectations, $\P(G_n^{\,c}\cap H_n)\le 4(D-1)e^{-c_0K\eps^{2}}$ for every $n$. Plugging this into \eqref{eq:first-failure} and using $N+1\le 2C\ln K/T$, which holds as soon as $\ln K\ge T/C$, we obtain \eqref{eq:proba-E} with $D_1=8(D-1)/T$ and $c_*=c_0$.
\end{proof}

We will now state properties of branching processes living in a deterministic periodic environment. Together with the control of the distance between $\Zperp$ and the deterministic periodic orbit we have just established in the previous lemmas, it will allow us to come back to the control of our actual population process.

\begin{lemma}[Kendall's formula in a periodic environment]\label{lem:kendall}
Let $\beta,\delta:\R_+\to\R_+$ be continuous and $T$-periodic, let $\rho:=\beta-\delta$ and $f:=\frac1T\int_0^T\rho(s)ds$. For $\theta\in[0,T)$ let $P^\theta$ be the linear birth-and-death process with individual birth rate $\beta(\theta+t)$ and death rate $\delta(\theta+t)$ at time $t$, started from $P^\theta(0)=1$, let $q(\theta):=\P(\exists t\ge0: P^\theta(t)=0)$ be its extinction probability, and let $\mathscr{P}^\theta:=(P^\theta(nT))_{n\in\N}$ be the associated discrete time Galton--Watson process, with generating function $g_\theta$.
\begin{enumerate}
\item[(a)] If $f\le0$ then $q(\theta)=1$ for every $\theta$.
\item[(b)] If $f>0$ then $q(\theta)<1$ and
\begin{equation}\label{eq:kendall}
  1-q(\theta)
  =\frac{1-e^{-Tf}}
        {1-e^{-Tf}+\displaystyle\int_0^{T}\delta(\theta+t)\,
          e^{-\int_0^{t}\rho(\theta+u)du}dt}
  =\frac{1-e^{-Tf}}
        {\displaystyle\int_0^{T}\beta(\theta+t)\,
          e^{-\int_0^{t}\rho(\theta+u)du}dt}.
\end{equation}
\item[(c)] If $f>0$ then $\mathscr{P}^\theta$ is a linear fractional Galton--Watson process and
\begin{equation}\label{eq:multiplier}
  g_\theta'\bigl(q(\theta)\bigr)=\frac{1}{g_\theta'(1)}=e^{-Tf} .
\end{equation}
  Consequently, for every $n\in\N$ and every real $A\ge1$,
\begin{equation}\label{eq:LF-rates}
   0\ \le\ q(\theta)-\P\bigl(\mathscr{P}^\theta(n)=0\bigr)\ \le\ e^{-nTf}
   \qquad\text{and}\qquad
   \P\bigl(0<\mathscr{P}^\theta(n)<A\bigr)\ \le\ A\,e^{-nTf} .
\end{equation}
\end{enumerate}
\end{lemma}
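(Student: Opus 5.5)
We will start from Kendall's classical computation for a linear birth-and-death process with time-dependent rates. For the process $P^\theta$ started from one individual at time $0$, the generating function $G(s,t)=\E[s^{P^\theta(t)}]$ solves the Kolmogorov equation $\partial_t G=(\beta(\theta+t)s^2-\sigma(\theta+t)s+\delta(\theta+t))\partial_s G$, with $\sigma=\beta+\delta$. Its solution is
\[
  G(s,t)=1-\frac{e^{R(t)}(1-s)}{1+(1-s)\int_0^t\beta(\theta+u)e^{R(u)}du},
  \qquad R(t):=\int_0^t\rho(\theta+u)du .
\]
We would write this down and check it by differentiation rather than rederive it. At $t=T$, using $R(T)=Tf$, this shows that $g_\theta(s)=G(s,T)$ is a linear fractional function of $s$. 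This gives the first claim of (c). It also gives $g_\theta'(1)=e^{Tf}$, so the mean offspring number is $e^{Tf}$.

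For (a) and (b), extinction of the continuous-time process is equivalent to extinction of the embedded Galton--Watson process $\mathscr P^\theta$. We must exclude oscillation without absorption, but $0$ is absorbing and $P^\theta(nT)\to0$ forces absorption. So $q(\theta)$ is the smallest fixed point of $g_\theta$ in $[0,1]$.

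When $f<0$ the process is subcritical and $q=1$. When $f=0$ it is critical and non-degenerate, because $\delta\not\equiv0$ (otherwise $f>0$), so again $q=1$. When $f>0$ we solve the fixed-point equation explicitly. Alternatively, one can use the formula $\P(P^\theta(t)=0)=G(0,t)$ and let $t\to\infty$ along $nT$. Since $R(t)\to\infty$, the survival probability is
\[
  \lim_{t\to\infty}\frac{1}{e^{-R(t)}+\int_0^t\beta(\theta+u)e^{-R(u)}du}
  =\frac{1}{\int_0^\infty\beta(\theta+u)e^{-R(u)}du}.
\]
(We should double-check the signs of the exponents here.) Splitting the integral into periods and using $R(u+kT)=R(u)+kTf$ turns it into a geometric series, which gives $\int_0^T\beta(\theta+t)e^{-R(t)}dt/(1-e^{-Tf})$. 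This yields the second expression in \eqref{eq:kendall}. The first expression follows from the identity $\int_0^T\rho(\theta+t) e^{-R(t)}dt=1-e^{-Tf}$, since $\beta=\rho+\delta$.

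The same formula for $f\le0$ gives a divergent denominator, which yields (a) directly. This is cleaner than invoking the critical Galton--Watson theory, and we would use it.

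For \eqref{eq:multiplier}, a linear fractional map with the two fixed points $q$ and $1$ has derivatives at them satisfying $g'(q)g'(1)=1$. We would verify this by conjugating $g$ to $w\mapsto mw$ via the Möbius map sending $q\mapsto0$ and $1\mapsto\infty$, with multiplier $m=g'(q)$.

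This conjugation also gives \eqref{eq:LF-rates}. The $n$-th iterate $g^{(n)}_\theta$ is linear fractional with multiplier $e^{-nTf}$ at $q$. By convexity of $g^{(n)}$ on $[0,q]$, or directly from the explicit Möbius form, we get $q-g^{(n)}(0)\le g^{(n)\prime}(q)\,q\le e^{-nTf}$.

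For the second inequality, the law of $\mathscr P^\theta(n)$ given $\mathscr P^\theta(n)>0$ is geometric with some parameter $p_n$, and $\P(\mathscr P^\theta(n)>0)=1-g^{(n)}(0)$. Hence
\[
  \P\bigl(0<\mathscr P^\theta(n)<A\bigr)\le\bigl(1-g^{(n)}(0)\bigr)\,A\,p_n .
\]
From the explicit $G$, we would use $p_n\le e^{-nTf}/(1-g^{(n)}(0))$, which is equivalent to the statement that the probability of being equal to $1$ at time $nT$ is at most $e^{-nTf}$. That last bound holds because it equals $e^{R(nT)}/(\cdots)^2$ times $\ldots$.

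The main obstacle is this last step, controlling $p_n$ via the explicit formula. We expect it to reduce to the bound $\P(\mathscr P^\theta(n)=j)\le e^{-nTf}$ for $j\ge1$, which we would read off from the coefficient formula of the geometric tail: the coefficients are $(1-G(0))(1-p_n)^{j-1}p_n$ with $(1-G(0))p_n=e^{R}/(1+\int\beta e^R)^2\cdot(\ldots)$. We would bound this by $e^{-nTf}$ after rewriting it with $R(nT)=nTf$.
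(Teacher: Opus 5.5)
\textbf{Verdict.} Most of your argument is sound and close to the paper's. This covers (a), (b), the linear-fractional structure, the identity $g_\theta'(q)g_\theta'(1)=1$, and the first inequality of \eqref{eq:LF-rates} via convexity of the iterate. Your M\"obius conjugation is a clean replacement for the paper's computation $(cs_1+d)(cs_2+d)=\Delta$. Your $\beta$-integral route to $1-q$ is the paper's $\delta$-integral route, since $e^{-R(t)}+\int_0^t\beta(\theta+u)e^{-R(u)}du=1+\int_0^t\delta(\theta+u)e^{-R(u)}du$.

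\textbf{The gap.} The second inequality of \eqref{eq:LF-rates} is not proved: your last paragraph ends in ellipses and never establishes $\P(\mathscr P^\theta(n)=1)\le e^{-nTf}$. The missing idea is just the mean. You already have two facts: given $\mathscr P^\theta(n)>0$, the law is geometric on $\{1,2,\dots\}$ with success parameter $p_n$; and $\E[\mathscr P^\theta(n)]=(g_\theta^{(n)})'(1)=e^{nTf}$. Hence $(1-q_n)/p_n=e^{nTf}$ with $q_n:=g_\theta^{(n)}(0)$, i.e.\ $p_n=(1-q_n)e^{-nTf}\le e^{-nTf}$. Therefore
\[
\P\bigl(0<\mathscr P^\theta(n)<A\bigr)=(1-q_n)\bigl(1-(1-p_n)^{\lceil A\rceil-1}\bigr)\le A\,p_n\le A\,e^{-nTf}.
\]
This is the paper's argument, and it needs no coefficient extraction. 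The explicit route also works, but only with a correct generating function. It gives $\P(P^\theta(t)=1)=e^{-R(t)}/(1+W_\theta(t))^2$, where $W_\theta(t):=\int_0^t\delta(\theta+u)e^{-R(u)}du$.

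\textbf{The formula for $G$.} Your displayed $G$ is wrong for time-dependent rates. The correct formula is
\[
G(s,t)=1-\frac{1-s}{e^{-R(t)}+(1-s)\int_0^t\beta(\theta+u)e^{-R(u)}du}.
\]
In your normalisation, the denominator integral should be $\int_0^t\beta(\theta+u)e^{R(t)-R(u)}du$ rather than $\int_0^t\beta(\theta+u)e^{R(u)}du$. Substituting your version into the Kolmogorov equation shows it holds only if $\rho(\theta+t)\int_0^t\beta(\theta+u)e^{R(u)}du=\beta(\theta+t)\bigl(e^{R(t)}-1\bigr)$. That is true for constant rates but not in general.

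Your limit computation in (b) silently uses the correct expression. Linear fractionality and $g_\theta'(1)=e^{Tf}$ hold for either version, so (a)--(c) survive once the formula is fixed. As written, however, the proposal is internally inconsistent, and coefficients read off your $G$ would be wrong.

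\textbf{Degenerate case.} A small point shared with the paper: the claim ``$\delta\not\equiv0$ (otherwise $f>0$)'' fails when $\beta\equiv\delta\equiv0$. Then $f=0$, but $P^\theta\equiv1$ and $q=0$. So (a) really needs $\delta\not\equiv0$. The paper's proof also uses this tacitly, since its series has a nonvanishing general term only then. In the application it holds by the standing assumption on $d_D$.
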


\begin{proof}
The starting point is Kendall's explicit solution \cite{Kendall1948} for a linear birth-and-death process with time-dependent rates: setting
$$
   W_\theta(t)=\int_0^t\delta(\theta+s)\,e^{-\int_0^s\rho(\theta+u)du}ds,
   \qquad
   V_\theta(t)=\int_0^t\beta(\theta+s)\,e^{-\int_0^s\rho(\theta+u)du}ds,
$$
the law of $P^\theta(t)$ has an atom at $0$ of mass $W_\theta(t)/(1+W_\theta(t))$ and, conditionally on $\{P^\theta(t)>0\}$, is geometric on $\{1,2,\dots\}$ with parameter $1/\big(1+e^{R_\theta(t)}V_\theta(t)\big)$, where $R_\theta(t):=\int_0^t\rho(\theta+u)\,du$. In particular $s\mapsto\E[s^{P^\theta(t)}]$ is a linear fractional function of $s$. 

(a) and (b). By periodicity and \eqref{eq:phase-inv},
$$e^{-\int_0^{kT+t}\rho(\theta+u)du}=e^{-kTf}e^{-\int_0^{t}\rho(\theta+u)du},$$
so that
$$
  W_\theta(\infty)=\sum_{k\geq 0}e^{-kTf}\int_0^T\delta(\theta+t)
   e^{-\int_0^t\rho(\theta+u)du}dt
  =\frac{1}{1-e^{-Tf}}\int_0^T\delta(\theta+t)
   e^{-\int_0^t\rho(\theta+u)du}dt
$$
if $f>0$, and $W_\theta(\infty)=\infty$ if $f\le0$, the general term of the series not tending to $0$. Since 
$$q(\theta)=\lim_{t\to\infty}\P(P^\theta(t)=0) =\frac{W_\theta(\infty)}{1+W_\theta(\infty)},$$ 
this gives (a) and the first expression in \eqref{eq:kendall}; the second one follows from
$$
  \int_0^T\rho(\theta+t)e^{-\int_0^t\rho(\theta+u)du}dt
  =\Bigl[-e^{-\int_0^t\rho(\theta+u)du}\Bigr]_0^T=1-e^{-Tf}.
$$

(c) A composition of linear fractional functions being linear fractional, $g_\theta$ is itself linear fractional, which is precisely the definition of a linear fractional Galton--Watson process (see \cite[Ch.~I.1]{AN72}).

The function $g_\theta$ has the two distinct fixed points $q(\theta)$ and $1$. Let us prove that $g_\theta'(q(\theta))g_\theta'(1)=1$. To this aim, we write $g_\theta(s)=(as+b)/(cs+d)$ with $\Delta:=ad-bc\neq0$, so that
$g_\theta'(s)=\Delta/(cs+d)^{2}$, and $c\neq0$. The fixed point equation reads $cs^{2}+(d-a)s-b=0$, so that its roots
$s_1:=q(\theta)$ and $s_2:=1$ satisfy $s_1s_2=-b/c$ and $s_1+s_2=(a-d)/c$.
Therefore 
$$(cs_1+d)(cs_2+d)=c^{2}s_1s_2+cd(s_1+s_2)+d^{2}=-bc+d(a-d)+d^{2}=\Delta,$$
and consequently 
$$g_\theta'(q(\theta))g_\theta'(1)=\Delta^{2}/\Delta^{2}=1.$$
As $$g_\theta'(1)=\E[\mathscr{P}^\theta(1)]=e^{\int_0^T\rho(\theta+u)du}=e^{Tf}$$ 
by \eqref{eq:phase-inv}, this proves \eqref{eq:multiplier}.

For \eqref{eq:LF-rates}, write $g:=g_\theta$, $q:=q(\theta)$ and $q_n:=g_{(n)}(0)=\P(\mathscr{P}^\theta(n)=0)$ for the probability of extinction before generation $n$, so that $q_n\uparrow q$. As $g$ is convex and nondecreasing, 
$$q-q_{n+1}=g(q)-g(q_n)\le g'(q)(q-q_n),$$ 
hence
$$q-q_n\le q\,e^{-nTf}\le e^{-nTf}$$ 
by \eqref{eq:multiplier}. For the second bound, the law of $\mathscr{P}^\theta(n)$ conditioned on being positive is
geometric with some parameter $1-\mathfrak b_n$, by the first paragraph of this proof applied at time $nT$, and taking expectations gives $(1-q_n)/(1-\mathfrak b_n)=\E[\mathscr{P}^\theta(n)]=e^{nTf}$. Therefore $1-\mathfrak b_n\le e^{-nTf}$ and
$$\P(0<\mathscr{P}^\theta(n)<A)=(1-q_n)\bigl(1-\mathfrak b_n^{\lceil A\rceil-1}\bigr)
\le A(1-\mathfrak b_n).$$
\end{proof}

\begin{proof}[Proof of Proposition \ref{prop_proba_inv}]
We apply Lemma \ref{lem:kendall} to $\beta=b_D(\bar{\mathbf n}(\cdot),0)$ and $\delta=d_D(\bar{\mathbf n}(\cdot),0)$, which are continuous and $T$-periodic, and note that $f=f(\bar{\mathbf n})$ by \eqref{def_inv_fitness}.
\end{proof}

To conclude this proof section, we will study the time needed to reach $0$ or $\eps K$. We will first focus on the subcritical case $f(\bar{\mathbf n})<0$, for which a reasoning in expectation is enough to obtain an upper bound, and then on the supercritical case $f(\bar{\mathbf n})>0$, which is more delicate to handle, as we are looking for an equivalent.

Before distinguishing between sub- and supercritical cases, notice that if we take $N_b(ds, d\theta)$ and $N_d(ds, d\theta)$ two independent Poisson random measures on $\R_+^2$ with intensity $ds d\theta$ we may realize the mutant population size process $N^K_D$ as the unique strong solution to
\begin{equation} \label{eq_ND} N_D^K(t) = N_D^K(0)+ \int_0^{t}\int_{\R_+} \mathbf{1}_{\{\theta \leq b_D(\mathbf Z^K(s-))N_D^K(s-)\}} N_b(ds, d\theta)- \int_0^{t}\int_{\R_+} \mathbf{1}_{\{\theta \leq d_D(\mathbf Z^K(s-))N_D^K(s-)\}} N_d(ds, d\theta). \end{equation}

\begin{proof}[Proof of \eqref{extinction}]
Assume $f(\bar{\mathbf n})<0$ and let $\eps$ be small enough that
$f(\bar{\mathbf n})+C_*\eps/T\le f(\bar{\mathbf n})/2<0$, with $C_*$ as in Proposition \ref{prop:env}.
From \eqref{eq_ND}, the process
\begin{equation}\label{eq:mart-M}
  M_D^K(t):=N_D^K(t)\exp\Big(-\int_0^t\rho_D\big(\mathbf{Z}^K(s)\big)ds\Big),
  \qquad t\geq 0,
\end{equation}
is a martingale for the natural filtration of the whole process $(\mathbf{N}^K,t\geq 0)$.

Let $c:=2T\sup\{|\rho_D(z)|:z\in\overline{\mathcal B_{\alpha_0}(\bar{\mathbf n})}\times[0,\eta_0]\}+C_*\eps$. Summing \eqref{eq:env-iii} over the complete periods and bounding the contribution of the last incomplete one, we get, on $\mathcal E_K$,
\begin{equation}\label{eq:int-rho}
  \Big|\int_0^t\rho_D(\mathbf Z^K(s))\,ds-tf(\bar{\mathbf n})\Big|
  \le c+\frac{C_*\eps}{T}\,t,\qquad t\le\tau_K .
\end{equation}
Since $f(\bar{\mathbf n})+C_*\eps/T\le f(\bar{\mathbf n})/2<0$, this gives $\int_0^t\rho_D(\mathbf Z^K(s))\,ds\le c+tf(\bar{\mathbf n})/2\le c$ for $t\le\tau_K$ on $\mathcal E_K$. The process $M^K_D$ of \eqref{eq:mart-M}, stopped at $\tau_K$, is a non-negative local martingale with $M^K_D(0)=1$, and a true martingale since $N^K_D(\cdot\wedge\tau_K)$ is dominated by a Yule process with rate $\bar\beta$. On $\mathcal E_K$, $N^K_D(t)\le e^cM^K_D(t)$ for $t\le\tau_K$. Hence, by Doob's maximal inequality,
\[
  \P\big(T^K_\eps\le\ln\ln K,\ \mathcal E_K\big)
  \le\P\Big(\sup_{t\le\tau_K}M^K_D(t)\ge e^{-c}\lfloor\eps K\rfloor\Big)
  \le\frac{e^c}{\lfloor\eps K\rfloor}.
\]
Moreover, with $t_K:=\ln\ln K<C\ln K$,
\[
  \P\big(N^K_D(t_K)\ge1,\ T^K_\eps>t_K,\ \mathcal E_K\big)
  \le e^{c+t_Kf(\bar{\mathbf n})/2}\,\E\big[M^K_D(t_K\wedge\tau_K)\big]
  =e^c(\ln K)^{f(\bar{\mathbf n})/2}.
\]
Therefore
\[
  \P\big(T^K_0>\ln\ln K\big)
  \le\frac{e^c}{\lfloor\eps K\rfloor}+e^c(\ln K)^{f(\bar{\mathbf n})/2}
  +\P(\mathcal E_K^c)\xrightarrow[K\to\infty]{}0
\]
by \eqref{eq:proba-E}. None of these estimates depends on the initial condition $x\in\mathcal B_\eps(\bar{\mathbf n})$, which proves \eqref{extinction}.
\end{proof}

\begin{proof}[Proof of \eqref{not_survive_ND} and \eqref{survive_ND}]
We now assume $f(\bar{\mathbf n})>0$. Throughout this step we apply Proposition \ref{prop:env} with the constant
$C':=2/f(\bar{\mathbf n})$, and we write $\tau_K:=T_\eps^K\wedge C'\ln K$; the value of $C'$ is chosen so that the environment is controlled up to the time at which the mutant population reaches the level $\eps K$, which is of order $\ln K/f(\bar{\mathbf n})$. We write $\theta_0:=\Theta(\Zperp(0))$ for the initial phase and $p_D(\theta_0)$ for the extinction probability given by \eqref{eq:kendall-sec3}. Two parameters have to be chosen: the number $n_1$ of periods over which the mutant population is compared with a birth and death process in a $T$-periodic environment, and the level $A$ that this population has to reach in order to be considered as established. Both are functions of $\eps$, fixed in \eqref{eq:choice-n1-A} below, and the limit $K\to\infty$ is taken last. The reason why $n_1$ may be allowed to grow as $\eps$ decreases - which is what makes the final estimate quantitative - is Step (4b): the comparison there bears on the generating functions of the processes over one period, and the iteration is then controlled by the contraction rate \eqref{eq:multiplier}, uniformly in the number of periods.

\medskip\noindent\textbf{(4a) Coupling over a bounded number of periods.}
We fix once and for all
\begin{equation}\label{eq:choice-n1-A}
   n_1:=\Bigl\lceil\frac{\ln(1/\eps)}{T f(\bar{\mathbf n})}\Bigr\rceil ,
   \qquad A:=\bigl\lceil\eps^{-1/2}\bigr\rceil ,
   \qquad \gamma:=L_{b,d}\big(1+C_*+L_{\bar{\mathbf n}}C_*n_1\big)\eps ,
\end{equation}
where $L_{b,d}$ is a common Lipschitz constant of $b_D$ and $d_D$ on $\overline{\mathcal B_{\alpha_0}(\bar{\mathbf n})}\times[0,\eta_0]$ and $L_{\bar{\mathbf n}}:=\sup_t\|\mathbf r(\bar{\mathbf n}(t))\|$ is the speed along the cycle, and we note that $\gamma\le C\eps\ln(1/\eps)$ for $\eps$ small enough. By \eqref{eq:env-i}--\eqref{eq:env-ii}, on $\mathcal E_K$ and for $t\in[nT,(n+1)T]$ with $(n+1)T\le n_1T\wedge\tau_K$, $\|\Zperp(t)-\bar{\mathbf n}(t+\theta_0)\|\le C_*\eps+L_{\bar{\mathbf n}}|\theta_n-\theta_0|\le C_*\eps+L_{\bar{\mathbf n}}nC_*\eps$, and $Z^K_D(t)\le\eps$. Hence, on $\mathcal E_K$ and for $t\le n_1T\wedge\tau_K$,
$$
  \big|b_D(\mathbf{Z}^K(t))-b_D(\bar{\mathbf n}_{\theta_0}(t),0)\big|\vee
  \big|d_D(\mathbf{Z}^K(t))-d_D(\bar{\mathbf n}_{\theta_0}(t),0)\big|\leq \gamma .
$$
Define the $T$-periodic rates
$$b^{\pm}:=\big(b_D(\bar{\mathbf n}_{\theta_0}(\cdot),0)\pm\gamma\big)^{+} \quad \text{and} \quad d^{\mp}:=\big(d_D(\bar{\mathbf n}_{\theta_0}(\cdot),0)\mp\gamma\big)^{+}, $$
and let $P^{\pm}$ be the corresponding linear birth-and-death processes, constructed from the same Poisson random measures $N_b,N_d$ as $N_D^K$ in \eqref{eq_ND}. Then, on $\mathcal E_K$,
\begin{equation}\label{eq:coupling}
  P^{-}(t)\ \le\ N_D^K(t)\ \le\ P^{+}(t),
  \qquad t\le n_1T\wedge\tau_K .
\end{equation}
Both $P^{\pm}$ have $T$-periodic rates, with invasion fitnesses $f^\pm$ satisfying $|f^\pm-f(\bar{\mathbf n})|\le2\gamma$, so that Lemma \ref{lem:kendall} applies to them.

\medskip\noindent\textbf{(4b) Comparison of the extinction probabilities.}
Let $g^{\pm}$ denote the generating functions of $P^{\pm}(T)$ and let $q^{\pm}$ be their fixed points in $[0,1)$. The process $P_D^{\theta_0}$ can be constructed from the same Poisson random measures $N_b$ and $N_d$ as $P^{\pm}$, and its rates satisfy $b^-\le b_D(\bar{\mathbf n}_{\theta_0}(\cdot),0)\le b^+$ and $d^-\le d_D(\bar{\mathbf n}_{\theta_0}(\cdot),0)\le d^+$. Hence $P^-(t)\le P_D^{\theta_0}(t)\le P^+(t)$ for every $t\ge0$, and therefore $q^{+}\le p_D(\theta_0)\le q^{-}$. The key point is that $g^+$ and $g^-$ are compared over one period only: since $P^{-}(T)\le P^{+}(T)$ almost surely,
\begin{equation}\label{eq:gpm-close}
  \big\Vert g^{+}-g^{-}\big\Vert_{\infty,[0,1]}
   \leq \P\big(P^{+}(T)\neq P^{-}(T)\big)
   \leq \E\big[P^{+}(T)-P^{-}(T)\big] \leq C_\gamma\gamma ,
\end{equation}
where $C_\gamma$ depends only on $T$, on $f(\bar{\mathbf n})$ and on the bounds of the rates along the cycle, but not on $n_1$. Write $q_n^{\pm}:=g^{\pm}_{(n)}(0)$ for the probabilities that $P^{\pm}$ is extinct at time $nT$. Since $P^-\le P^+$, we have $q_n^{+}\le q_n^{-}\le q^{-}$ for every $n$. By \eqref{eq:multiplier} applied to $P^-$, and since $g^{-}$ is convex and nondecreasing,
$$
  \bar\alpha:=\sup_{s\le q^{-}}(g^{-})'(s)=(g^{-})'(q^{-})=e^{-Tf^{-}}
   \leq e^{-T(f(\bar{\mathbf n})-2\gamma)}< 1
$$
for $\eps$ small enough. Writing $q_{n+1}^{+}-q_{n+1}^{-}=\big(g^{+}(q_n^{+})-g^{-}(q_n^{+})\big)+\big(g^{-}(q_n^{+})-g^{-}(q_n^{-})\big)$ and using the mean value theorem on $[0,q^-]$, we get
$$|q_{n+1}^{+}-q_{n+1}^{-}|\le\Vert g^{+}-g^{-}\Vert_\infty
+\bar\alpha|q_n^{+}-q_n^{-}|. $$
Hence, since $q_0^{+}=q_0^{-}=0$, using \eqref{eq:gpm-close} we obtain
\begin{equation}\label{eq:un-vn}
  \sup_{n\in\N} |q_n^{+}-q_n^{-}| \leq \frac{C_\gamma\gamma}{1-\bar\alpha}
  \leq C_2\eps\ln(1/\eps) .
\end{equation}
Applying \eqref{eq:LF-rates} to $P^{-}$ and by the choice \eqref{eq:choice-n1-A} of $n_1$, we also have
$$q^{-}-q_{n_1}^{-}\le e^{-n_1Tf^{-}}\le C_3\eps. $$
By \eqref{eq:coupling}, $P^-(n_1T)\le N_D^K(n_1T)\le P^+(n_1T)$ on $\mathcal E_K\cap\{n_1T\le\tau_K\}$. For $K$ large enough, $C'\ln K>n_1T$, so that $\{n_1T>\tau_K\}\subset\{T^K_\eps\le n_1T\}$. Therefore
$$
  q_{n_1}^{+}-\P(\mathcal E_K^c)-\P(T^K_\eps\le n_1T)
  \ \le\ \P\big(N_D^K(n_1T)=0\big)
  \ \le\ q_{n_1}^{-}+\P(\mathcal E_K^c)+\P(T^K_\eps\le n_1T).
$$
Since moreover $q_{n_1}^{+}\le q^{+}\le p_D(\theta_0)\le q^{-}$ and $q_{n_1}^{-}\le q^{-}$, the two quantities $\P(N_D^K(n_1T)=0)$ and $p_D(\theta_0)$ differ by at most
$$
  q^{-}-q_{n_1}^{+}+\P(\mathcal E_K^c)+\P(T^K_\eps\le n_1T)
  \le\big(q^{-}-q_{n_1}^{-}\big)+\big(q_{n_1}^{-}-q_{n_1}^{+}\big)+\P(\mathcal E_K^c)+\P(T^K_\eps\le n_1T).
$$
We conclude, by \eqref{eq:un-vn} and the previous bound on $q^{-}-q_{n_1}^{-}$, that
\begin{align}\label{eq:step4b-1}
  \big|\P\big(N_D^K(n_1T)=0\big)-p_D(\theta_0)\big| &  \leq C_4\eps\ln(1/\eps)+\P(\mathcal E_K^c)+\P(T^K_\eps\le n_1T).
\end{align}

\medskip\noindent\textbf{(4c) The mutant population is either extinct or large at time $n_1T$.} Using \eqref{eq:coupling}, then \eqref{eq:un-vn} and \eqref{eq:LF-rates} applied to $P^{-}$, we obtain
\begin{align}
  \P\big(0<N_D^K(n_1T)<A\big)
  &\leq \Big[\P\big(P^{+}(n_1T)\geq 1\big)-\P\big(P^{-}(n_1T)\geq 1\big)\Big]
    +\P\big(0<P^{-}(n_1T)<A\big)+\P(\mathcal E_K^c)\notag\\
  &\leq \frac{C_\gamma \gamma}{1-\bar\alpha}+A\,e^{-n_1Tf^{-}}+\P(\mathcal E_K^c)
    \leq C_5\sqrt{\eps}+\P(\mathcal E_K^c),\label{eq:step4b-2}
\end{align}
the second term being of order $\eps^{-1/2}\cdot\eps=\sqrt\eps$ by \eqref{eq:choice-n1-A}.

\medskip\noindent\textbf{(4d) Growth phase: upper bound on $T_\eps^K$.}
The event $\mathcal E_K$ is not adapted to the filtration, so we first localise. Let
\[
  \varrho_K:=\inf\Big\{t\ge0:\ \big\|\Zperp(t)
  -n^{\Zperp(\lfloor t/T\rfloor T)}\big(t-\lfloor t/T\rfloor T\big)\big\|
  >C_0\eps\Big\}.
\]
This is a stopping time. The induction in the proof of Proposition~\ref{prop:env} shows that \eqref{eq:env-i}--\eqref{eq:env-iii}, hence \eqref{eq:int-rho}, hold pathwise for $t\le\varrho_K\wedge\tau_K$, and that $\{\varrho_K<\tau_K\}\subset\mathcal E_K^c$. In this step, the martingale $\widehat M$ below is stopped at $\varrho_K\wedge\tau_K$, and the event $\mathcal E_K$ is replaced by $\{\varrho_K\ge\tau_K\}$, whose complement has probability at most $\P(\mathcal E_K^c)$.
On the event $\{N_D^K(n_1T)\geq A\}\cap\mathcal E_K$, consider
$$  \widehat M(t):=N_D^K(n_1T+t)\exp\Big(-\int_{n_1T}^{n_1T+t}
     \rho_D(\mathbf{Z}^K(s))ds\Big),\qquad t\leq \tau_K-n_1T ,$$
which is again a martingale, with $\widehat M(0)=N_D^K(n_1T)\ge A$. Its predictable bracket is
$$  \langle \widehat M\rangle_t=\int_0^{t}
   e^{-2\int_{n_1T}^{n_1T+s}\rho_D(\mathbf{Z}^K)}\,N_D^K(n_1T+s)\,
   \sigma_D\bigl(\mathbf{Z}^K(n_1T+s)\bigr)ds ,$$
hence, using \eqref{eq:env-iii} and $\sigma_D\le\bar\sigma$ on the relevant compact set,
$$  \E\bigl[\langle\widehat M\rangle_{\tau_K-n_1T}\mid\mathcal F_{n_1T}\bigr]
  \le \bar\sigma\,\widehat M(0)\int_0^{\infty}e^{-s(f(\bar{\mathbf n})-C_*\eps/T)+c}ds
  = \frac{\bar\sigma e^{c}}{f(\bar{\mathbf n})-C_*\eps/T}\,\widehat M(0).$$
Doob's $L^2$ inequality then gives
\begin{equation}\label{eq:kappaA}
  \P\Big(\sup_{t}\big|\widehat M(t)-\widehat M(0)\big|
     \geq \tfrac12\widehat M(0) \Big|\ \mathcal F_{n_1T}\Big)
   \leq \frac{4\bar\sigma e^{c}}{(f(\bar{\mathbf n})-C_*\eps/T)A}
   \leq C_6\sqrt{\eps} .
\end{equation}
On the complementary event, for $t\le\tau_K-n_1T$,
$$  N_D^K(n_1T+t) \geq \tfrac{A}{2}e^{-c}
      \exp\big\{t\big(f(\bar{\mathbf n})-C_*\eps/T\big)\big\},$$
which is larger than $\eps K$ as soon as $t\geq \ln K/(f(\bar{\mathbf n})-C_*\eps/T)\cdot(1+o_K(1))$. As $C'\geq 2/f(\bar{\mathbf n})$, the truncation at $C'\ln K$ in $\tau_K$ is not active for $\eps$ small enough, and
we conclude
\begin{equation}\label{eq:Teps-upper}
  \limsup_{K\to\infty}\ \P\Big(N_D^K(n_1T)\geq A,
   T_\eps^K>\frac{\ln K}{f(\bar{\mathbf n})-C_*\eps/T}\Big) \leq C_6\sqrt{\eps} .
\end{equation}

\medskip\noindent\textbf{(4e) Growth phase: lower bound on $T_\eps^K$.}
By \eqref{eq:mart-M}, optional stopping and \eqref{eq:env-iii}, for $t\le C'\ln K$,
$$
  \E\big[N_D^K(t\wedge\tau_K)\mathbf{1}_{\mathcal E_K}\big]
   \leq e^{c}e^{t(f(\bar{\mathbf n})+C_*\eps/T)} ,
$$
so that Markov's inequality gives
$$
  \P\bigl(T_\eps^K\le t\bigr)\le
  \frac{e^{c}e^{t(f(\bar{\mathbf n})+C_*\eps/T)}}{\eps K}+\P(\mathcal E_K^{c}).
$$
Choosing $t=\ln K/\bigl(f(\bar{\mathbf n})+2C_*\eps/T\bigr)$ makes the first term equal to 
$$e^{c}\eps^{-1}K^{\frac{f(\bar{\mathbf n})+C_*\eps/T}{f(\bar{\mathbf n})+2C_*\eps/T}-1}, $$
which tends to $0$ as $K\to\infty$. Hence
\begin{equation}\label{eq:Teps-lower}
  \lim_{K\to\infty}\P\Big(T_\eps^K\leq\frac{\ln K}{f(\bar{\mathbf n})+2C_*\eps/T}\Big)=0 .
\end{equation}

\medskip\noindent\textbf{(4f) Conclusion.}
It remains to relate the event $\{T_0^K<T_\eps^K\}$ appearing in \eqref{not_survive_ND} to the event $\{N_D^K(n_1T)=0\}$ estimated in \eqref{eq:step4b-1}. These two events differ by two others only. The first is $\{T_\eps^K\leq n_1T\}$, whose probability is $o_K(1)$: by the first moment bound of Step (4e) with $t=n_1T$, which is a fixed time, 
$$\P(T_\eps^K\leq n_1T)\leq e^{c}e^{n_1T(f(\bar{\mathbf n})+C_*\eps/T)}/(\eps K)+\P(\mathcal E^c_K), $$ 
and $n_1$ does not depend on $K$. The second is the event that the mutant population is still alive at time $n_1T$ but becomes extinct before reaching the level $\eps K$; on $\mathcal E_K$ this implies either $0<N_D^K(n_1T)<A$, or the failure of the Doob estimate \eqref{eq:kappaA}, since on the complement of both the lower bound of Step (4d) gives $N_D^K(t)\ge1$ for every
$t\in[n_1T,\tau_K]$, and $0$ being absorbing this in turn forces $N_D^K(t)\geq 1$ for every $t\leq\tau_K$. Its probability is therefore at most
$$C_5\sqrt\eps+C_6\sqrt\eps+\P(\mathcal E_K^c)$$ 
by \eqref{eq:step4b-2} and \eqref{eq:kappaA}.

Adding \eqref{eq:step4b-1}, \eqref{eq:step4b-2}, \eqref{eq:kappaA},
\eqref{eq:Teps-upper} and \eqref{eq:Teps-lower}, recalling that $\P(\mathcal E_K^c)\to0$ as $K\to\infty$ by \eqref{eq:proba-E}, and setting $\tilde{C}:= C_* (1 \vee 2/T)$ so that $2C_*\eps/T\leq \tilde{C} \eps$, we obtain the existence of a finite constant $C_{**}$, depending only on $T$, on $f(\bar{\mathbf n})$ and on the bounds of the rates along the cycle, such that
$$
  \limsup_{K\to\infty}\Big|\P\big(T_0^K<T_\eps^K\big)-p_D(\theta_0)\Big|
    \leq C_{**}\sqrt{\eps}
$$
and
$$
  \limsup_{K\to\infty}\left|\P\Bigl(
    \frac{\ln K}{f(\bar{\mathbf n})+\tilde{C}\eps}<T_\eps^K<\frac{\ln K}{f(\bar{\mathbf n})-\tilde{C}\eps}
   \Bigr)-\bigl(1-p_D(\theta_0)\bigr)\right|\ \le\ C_{**}\sqrt{\eps},
$$
which are \eqref{not_survive_ND} and \eqref{survive_ND}.
\end{proof}

\begin{proof}[Proof of Corollary \ref{cor:average}]
By \eqref{eq:kendall-sec3} and the periodicity of $b$ and $\rho$, $u:=1/s$ is given by
\[
u(\theta)=\frac{1}{1-e^{-Tf(\bar{\mathbf n})}}
\int_\theta^{\theta+T}b(r)\,e^{-\int_\theta^r\rho(v)\,dv}\,dr .
\]
Differentiating, and using $\int_\theta^{\theta+T}\rho=Tf(\bar{\mathbf n})$ and $b(\theta+T)=b(\theta)$, we get $u'=\rho u-b$, which is
\eqref{eq:riccati}. Since $\int_0^T\rho\neq0$, this linear equation has a unique $T$-periodic solution. Dividing \eqref{eq:riccati} by $s>0$ and integrating over one period gives $0=\int_0^T(bs-\rho)$, that is \eqref{eq:average}. The last formula follows from
$\frac1T\int_0^T bs=\bar b\,\bar s+\operatorname{Cov}(b,s)$.
\end{proof}

\section*{Statements and Declarations}

\subsection*{Funding}

S. Billiard, C. Mian, and C. Smadi benefited from the support of the MMB Chair (Mathematics for Life and Environmental Sciences) VEOLIA-Ecole Polytechnique-MNHN-F.X.

Funding acknowledgements by A.~Tóbiás: This paper was supported by the János Bolyai Research Scholarship of the Hungarian Academy of Sciences. Project no. STARTING 149835 has been implemented with the support provided by the Ministry of Culture and Innovation of Hungary from the National Research, Development and Innovation Fund, financed under the STARTING 24 funding scheme.

\subsection*{Competing interests}

The authors have no relevant financial or non-financial interests to disclose.

\subsection*{Data availability}

No external datasets were used in this study. All numerical results were generated from simulations of the mathematical models described in the manuscript.

\bibliography{periodic}

@article{Abrams2000,
author={Abrams, P.A.},
title={The Evolution of Predator-Prey Interactions: Theory and Evidence},
journal={Annual Review of Ecology and Systematics},
volume={31},
pages={79--105},
year={2000}
}

@book{AN72,
title= "Branching {P}rocesses",
publisher="Springer",
author="K.B. Athreya and P.E. Ney",
year="1972"
}

@article{cortez2020,
  title={Destabilizing evolutionary and eco-evolutionary feedbacks drive empirical eco-evolutionary cycles},
  author={Cortez, M.H. and Patel, S. and Schreiber, S.J.},
  journal={Proceedings of the Royal Society B: Biological Sciences},
  volume={287},
  number={1919},
  year={2020},
  publisher={The Royal Society}
}

@article{loeuille2005,
  title={Evolutionary emergence of size-structured food webs},
  author={Loeuille, N. and Loreau, M.},
  journal={Proceedings of the National Academy of Sciences},
  volume={102},
  number={16},
  pages={5761--5766},
  year={2005},
  publisher={National Academy of Sciences}
}

@article{hubbard2010,
  title={Vertebrate pigmentation: from underlying genes to adaptive function},
  author={Hubbard, J.K. and Uy, J.A.C. and Hauber, M.E. and Hoekstra, H.E. and Safran, R.J.},
  journal={Trends in Genetics},
  volume={26},
  number={5},
  pages={231--239},
  year={2010},
  publisher={Elsevier}
}

@book{tilman1982,
  title={Resource Competition and Community Structure},
  author={Tilman, D.},
  series={Monographs in Population Biology},
  number={17},
  year={1982},
  publisher={Princeton University Press}
}

@article{jones2007,
  title={Effects of rapid prey evolution on predator--prey cycles},
  author={Jones, L.E. and Ellner, S.P.},
  journal={Journal of Mathematical Biology},
  volume={55},
  number={4},
  pages={541--573},
  year={2007},
  publisher={Springer}
}

@article{abrams1999,
  title={Is predator-mediated coexistence possible in unstable systems?},
  author={Abrams, P.A.},
  journal={Ecology},
  volume={80},
  number={2},
  pages={608--621},
  year={1999},
  publisher={Wiley Online Library}
}

@article{durrett2010,
  title={Evolution in predator--prey systems},
  author={Durrett, R. and Mayberry, J.},
  journal={Stochastic Processes and their Applications},
  volume={120},
  number={7},
  pages={1364--1392},
  year={2010},
  publisher={Elsevier}
}

@Article{billiard2017interplay,
  Title                    = {The interplay of two mutations in a population of varying size: a stochastic eco-evolutionary model for clonal interference},
  Author                   = {Billiard, S. and Smadi, C.},
  Journal                  = {Stochastic Processes and their Applications},
  Year                     = {2017},
  Number                   = {3},
  Pages                    = {701--748},
  Volume                   = {127},
  Publisher                = {Elsevier}
}

@article{BS19,
AUTHOR={Billiard, S. and Smadi, C.},
TITLE={Stochastic dynamics of three competing clones: Conditions and times for invasion, coexistence, and fixation},
JOURNAL={The American Naturalist},
VOLUME={195},
NUMBER={3},
PAGES={463--484},
YEAR={2020},
}

@article{BCFMT16,
author="S. Billiard and P.  Collet and R. Ferrière and S. Méléard and V. Tran",
title="The effect of competition and horizontal trait inheritance on invasion, fixation, and polymorphism",
journal="Journal of Theoretical Biology",
volume="411",
pages="48-58",
year="2016"}

@incollection{BCFMT18,
  author    = {Billiard, Sylvain and Collet, Pierre and Ferri{\`e}re, R{\'e}gis and M{\'e}l{\'e}ard, Sylvie and Tran, Viet Chi},
  title     = {Stochastic dynamics for adaptation and evolution of microorganisms},
  booktitle = {European {C}ongress of {M}athematics ({B}erlin, {J}uly 18--22, 2016)},
  editor    = {Mehrmann, Volker and Skutella, Martin},
  publisher = {European Mathematical Society},
  address   = {Z{\"u}rich},
  pages     = {525--550},
  year      = {2018},
  isbn      = {978-3-03719-176-7},
  doi       = {10.4171/176-1/25}
}

@article{bovier2019crossing,
  title={Crossing a fitness valley as a metastable transition in a stochastic population model},
  author={Bovier, A. and Coquille, L. and Smadi, C.},
  journal={Annals of Applied Probability},
  volume={29},
  number={6},
  pages={3541--3589},
  year={2019},
  publisher={JSTOR}
}

@article{MBLS,
title={Dynamics of two species with density-dependent
interactions in a mutualistic context},
author={C. Mian and S. Billiard and V. Llaurens and C. Smadi},
journal={arXiv:2509.06062},
year={2026}
}

@article{champagnat2006microscopic,
  title={A microscopic interpretation for adaptive dynamics trait substitution sequence models},
  author={Champagnat, N.},
  journal={Stochastic Processes and their Applications},
  volume={116},
  number={8},
  pages={1127--1160},
  year={2006},
  publisher={Elsevier}
}

@article{champagnat2007invasion,
  title={Invasion and adaptive evolution for individual-based spatially structured populations},
  author={Champagnat, N. and M{\'e}l{\'e}ard, S.},
  journal={Journal of Mathematical Biology},
  volume={55},
  pages={147--188},
  year={2007},
  publisher={Springer}
}

@Article{champagnat2011polymorphic,
  Title                    = {Polymorphic evolution sequence and evolutionary branching},
  Author                   = {Champagnat, N. and M{\'e}l{\'e}ard, S.},
  Journal                  = {Probability Theory and Related Fields},
  Year                     = {2011},
  Number                   = {1-2},
  Pages                    = {45--94},
  Volume                   = {151},
  Publisher                = {Springer}
}

@article{champagnat2021stochastic,
  title={Stochastic analysis of emergence of evolutionary cyclic behavior in population dynamics with transfer},
  author={Champagnat, N. and M{\'e}l{\'e}ard, S. and Tran, V.C.},
  journal={The Annals of Applied Probability},
  volume={31},
  number={4},
  pages={1820--1867},
  year={2021},
  publisher={JSTOR}
}

@article{CoquilleKrautSmadi,
title={Stochastic individual-based models with power law mutation
rate on a general finite trait space},
author={L. Coquille and A. Kraut and C. Smadi},
journal={Electronic Journal of Probability},
volume={26},
pages={1--37},
year={2021}
}

@article{DieckmannDoebeli,
title={On the origin of species by sympatric speciation},
author={U. Dieckmann and M. Doebeli},
journal={Nature},
volume={400},
pages={354--357},
year={1999}
}

@book{dupuis1997weak,
  title={A Weak Convergence Approach to the Theory of Large Deviations},
  author={Dupuis, P. and Ellis, R.S.},
  year={2011},
  publisher={John Wiley \& Sons}
}

@book{ethier2009markov,
  title={Markov Processes: Characterization and Convergence},
  author={Ethier, S.N. and Kurtz, T.G.},
  year={2009},
  publisher={John Wiley \& Sons}
}

@article{F02,
author={R. Ferrière and J.L. Bronstein and S. Rinaldi and R. Law and M. Gauduchon},
title={Cheating and the evolutionary stability of mutualisms},
journal={Proceedings
of the Royal Society B: Biological Sciences},
volume={269},
number={1493},
pages={773--780},
year={2002}
}

@book{FW84,
title={Random Perturbations of Dynamical Systems},
author={Freidlin, M. and Wentzell, A.D.}, 
series={Grundlehren der Mathematischen Wissenschaften (Fundamental Principles of Mathematical Sciences)},
volume={260},
publisher={Springer},
year={1984}
}

@book{kuznetsov,
title={Elements of Applied
Bifurcation Theory,
Second Edition},
author={Y.A. Kuznetsov},
publisher={Springer},
series={Applied Mathematical Sciences},
volume={112},
year={1998}
}

@article{RosenzweigMacArthur,
author={M.L. Rosenzweig and
R.H. MacArthur}, 
title={Graphical representation and stability conditions of predator-prey interaction},
journal={The American Naturalist},
volume={97},
pages={209–223},
year={1963}
}

@article{Smale1976,
author={S. Smale},
title={On the differential equations of species in competition},
journal={Journal of Mathematical Biology}, 
volume={3},
pages={5--7},
year={1976}
}

@book{Turchin2003,
title={Complex Population Dynamics: A Theoretical/Empirical Synthesis},
author={P. Turchin},
publisher={Princeton University Press},
city={Princeton},
year={2003}
}

@article{Vano,
title={Chaos in low-dimensional {L}otka–{V}olterra models of
competition},
author={
J.A. Vano and J.C. Wildenberg and M.B. Anderson and J.K. Noel and J.C. Sprott},
journal={Nonlinearity},
volume={19},
pages={2391--2404},
year={2006}}

@article{WX,
author={R. Wang and D. Xiao},
title={Bifurcations and chaotic dynamics in a 4-dimensional
competitive {L}otka--{V}olterra system},
journal={Nonlinear Dynamics},
volume={59},
pages={411--422},
year={2010}
}

@article{gillespie1977exact,
  title={Exact stochastic simulation of coupled chemical reactions},
  author={Gillespie, D.T.},
  journal={The Journal of Physical Chemistry},
  volume={81},
  number={25},
  pages={2340--2361},
  year={1977},
  publisher={ACS Publications}
}

@article{Kendall1948, 
author={Kendall, D. G.},
title={On the generalized ``birth-and-death'' process},
journal={Ann. Math. Statist.}, 
volume={19}, 
pages={1--15}, 
year={1948}
}

@book{Hartman, 
author={Hartman, P.},
title={Ordinary Differential Equations}, 
edition={2}, 
publisher={SIAM},
year={2002}
}

@article{Bennett1962, 
author={Bennett, G.},
title={Probability inequalities for the sum of independent random variables}, journal={J. Amer. Statist. Assoc.}, 
volume={57},
pages={33--45}, 
year={1962}
}

@book{BLM2013, 
author={Boucheron, S. and Lugosi, G. and Massart, P.},
title={Concentration Inequalities: A Nonasymptotic Theory of Independence}, publisher={Oxford University Press}, 
year={2013}
}

@article{Guckenheimer75, 
author={Guckenheimer, J.},
title={Isochrons and phaseless sets}, 
journal={Journal of Mathematical Biology},
volume={1}, 
pages={259--273}, 
year={1975}
}

@book{Chicone, 
author={Chicone, C.},
title={Ordinary Differential Equations with Applications}, 
edition={2},
publisher={Springer}, 
year={2006}
}

@article{MetzNisbetGeritz1992,
  author  = {Metz, J.A.J. and Nisbet, R.M. and Geritz, S.A.H.},
  title   = {How should we define `fitness' for general ecological scenarios?},
  journal = {Trends in Ecology \& Evolution},
  volume  = {7}, number = {6}, pages = {198--202}, year = {1992}}

@article{FerriereGatto1995,
  author  = {Ferri\`ere, R. and Gatto, M.},
  title   = {Lyapunov exponents and the mathematics of invasion in oscillatory or chaotic populations},
  journal = {Theoretical Population Biology},
  volume  = {48}, number = {2}, pages = {126--171}, year = {1995},
  doi     = {10.1006/tpbi.1995.1024}}

@article{UeckerHermisson2011,
  author  = {Uecker, H. and Hermisson, J.},
  title   = {On the fixation process of a beneficial mutation in a variable environment},
  journal = {Genetics},
  volume  = {188}, number = {4}, pages = {915--930}, year = {2011},
  doi     = {10.1534/genetics.110.124297}}

@article{CarmonaGandon2020,
  author  = {Carmona, P. and Gandon, S.},
  title   = {Winter is coming: pathogen emergence in seasonal environments},
  journal = {PLoS Computational Biology},
  volume  = {16}, number = {7}, pages = {e1007954}, year = {2020}}

@article{EsserKraut2025,
  author  = {Esser, M. and Kraut, A.},
  title   = {Effective growth rates in a periodically changing environment: from mutation to invasion},
  journal = {Stochastic Processes and their Applications},
  volume  = {184}, pages = {104598}, year = {2025},
  doi     = {10.1016/j.spa.2025.104598}}

@article{Gilpin1975,
  author  = {Gilpin, M.E.},
  title   = {Limit cycles in competition communities},
  journal = {The American Naturalist},
  volume  = {109}, number = {965}, pages = {51--60}, year = {1975},
  doi     = {10.1086/282973}}

@article{SinervoLively1996,
  author  = {Sinervo, B. and Lively, C.M.},
  title   = {The rock--paper--scissors game and the evolution of alternative male strategies},
  journal = {Nature},
  volume  = {380}, pages = {240--243}, year = {1996}}

@article{BaireyKelsicKishony2016,
  author  = {Bairey, E. and Kelsic, E.D. and Kishony, R.},
  title   = {High-order species interactions shape ecosystem diversity},
  journal = {Nature Communications},
  volume  = {7}, pages = {12285}, year = {2016},
  doi     = {10.1038/ncomms12285}}
\bibliographystyle{plain}

\end{document}